\NewDocumentCommand{\rsubseteq}{m}
 {%
  \rotatebox[origin=c]{#1}{$\subseteq$}%
 }
\DeclareMathOperator{\Cov}{Cov}
\DeclareMathOperator{\Cr}{Core}
\DeclareMathOperator{\cC}{Core\check{C}ech}
\DeclareMathOperator{\Core}{\mathit{d}}
\DeclareMathOperator{\aCr}{DelCore}
\DeclareMathOperator{\Vor}{Vor}
\newcounter{dummy} \numberwithin{dummy}{section}
\newtheorem{theorem}[dummy]{Theorem}
\newtheorem{corollary}[dummy]{Corollary}
\newtheorem{lemma}[dummy]{Lemma}
\newtheorem{definition}[dummy]{Definition}
\theoremstyle{remark}
\newtheorem{example}[dummy]{Example}
\def\keywords{\xdef\@thefnmark{}\@footnotetext}
\renewcommand*{\@cite@ofmt}{\bfseries\hbox}
\title{Core Bifiltration}
\author[1]{Nello Blaser}
\author[2]{Morten Brun}
\author[1]{Odin Hoff Gardaa}
\author[1,2]{Lars M. Salbu}
\affil[1]{Department of Informatics, University of Bergen, Norway}
\affil[2]{Department of Mathematics, University of Bergen, Norway}
\affil[ ]{\textit{\{nello.blaser,morten.brun,odin.garda,lars.salbu\}@uib.no}}
\date{}
\begin{document}

\maketitle

\begin{abstract}
The motivation of this paper is to recognize a geometric shape from a noisy sample in the form of a point cloud. Inspired by the HDBSCAN clustering algorithm, we introduce the core dissimilarity, from which we construct the core bifiltration. We also consider the Delaunay core bifiltration by intersecting with Voronoi cells, giving us a filtered simplicial complex of smaller size. A major advantage of the (Delaunay) core bifiltration is that, for each filtration value, it admits a good cover of balls. By the persistent nerve theorem, the nerve of this cover is homotopy equivalent to the (Delaunay) core bifiltration. We show that the multicover-, core- and Delaunay core bifiltrations are all interleaved, and that they enjoy similar stability properties with respect to the Prohorov distance. We have performed experiments with the Delaunay core bifiltration. In the experiments, we calculated persistent homology along lines in the two-dimensional persistence parameter space, as well as multipersistence module approximations and Hilbert functions for the full Delaunay core bifiltration.
\end{abstract}

\textbf{2020 Mathematics Subject Classification:} 55N31, 62R40.

\textbf{Keywords:} Topological Data Analysis, Multicover Bifiltration, Multiparameter Persistence, Stability

\section{Introduction}
Multiparameter persistent homology has recently emerged as an important part of topological data analysis \cite{botnan2023introduction}. In multiparameter persistent homology, we study the changes in homology of a topological space equipped with a multiparameter filtration as these parameters vary. In practice, many filtrations have two parameters, so-called bifiltrations. Examples include the degree-Rips bifiltration \cite{lesnick2015interactive,blumberg2022stability}, the density-Rips bifiltration \cite{botnan2023introduction,MR2469164}, the sublevel Delaunay-\v{C}ech bifiltration \cite{alonso2024delaunay}
and the multicover bifiltration \cite{sheehy12multicover}. The study of different bifiltrations and their relationships with each other is an active research area.

In this paper, we introduce the core bifiltration using the core distance from \cite{hdbscan}. For a finite point cloud $A$ in a metric space $(M,d)$ and $k>0$, the $k$-core distance $\Core^A_k(x)$ of a point $x$ in $M$ is the radius of the smallest closed metric ball centered at $x$ that contains at least $k$ points of $A$. We denote the core bifiltration by $\Cr^\beta(A)$ where $\beta>0$ is a fixed real parameter. For filtration values $r,k>0$, the set $\Cr_{r,k}^\beta(A)$ consists of the points in $M$ that are within distance $r$ to some point $a$ in $A$ satisfying $\Core^A_k(a)\leq r/\beta$. Thus, the parameter $\beta$ balances the relative weights of the core- and the metric distance. This is analogous to the $\alpha$-parameter in the hierarchical clustering algorithms of \cite{chaudhuri2010_cluster_tree} and \cite{rolle2024stable}.

The core bifiltration generalizes the degree-\v{C}ech and the degree-Rips bifiltrations in the sense that both can be recovered by setting $\beta=1/2$ and applying the appropriate nerve constructions.


In order to mitigate the fact that the core bifiltration does not scale well with the number of data points, we introduce the Delaunay core bifiltration $\aCr^\beta(A)$ of a point cloud $A\subseteq\mathbb{R}^n$. As discussed in \cref{sec:alphacore}, the Delaunay complex has $\mathcal{O}(|A|^{\lceil\frac{n}{2}\rceil})$ simplices resulting in a size of $\mathcal{O}(|A|^{\lceil\frac{n}{2}\rceil+1})$ for the Delaunay core bifiltration. This is similar to the size $\mathcal{O}(|A|^{\lceil\frac{n+1}{2}\rceil})$ of the sublevel Delaunay-\v{C}ech bifiltration, which is constructed from a real-valued function on the input point cloud $A$. This function is often a density function, requiring the selection of a bandwidth parameter. In contrast, our construction is parameter-free.

The core- and the Delaunay core bifiltrations are both interleaved with the multicover bifiltration. This interleaving allows us to transfer stability results from \cite{blumberg2022stability}, in a weakened form, from the multicover bifiltration to the (Delaunay) core bifiltration. 

Similar to the degree-Rips bifiltration, the core bifiltration, and in particular the Delaunay core bifiltration, is computationally more accessible than the multicover bifiltration because of its smaller size. There are ongoing efforts in the research community aiming to improve the feasibility of computing the multicover bifiltration through techniques such as, for example, sparsification \cite{buchet2023sparse} and exploring related combinatorial bifiltrations \cite{Corbet2023,Edelsbrunner2021,Edelsbrunner2023}.

\textbf{Contributions.} Our contributions are as follows. 
\begin{enumerate}
    \item We introduce two bifiltrations of spaces, namely the \textit{core bifiltration} and the \textit{Delaunay core bifiltration}. These are inspired by the density-dependent clustering algorithm HDBSCAN \cite{hdbscan}.
    \item In theorems \ref{corecovtheorem} and \ref{alphacoremulticov} we
    describe interleavings between the (Delaunay) core and multicover bifiltrations summarized in the following diagram of inclusions:
    
\resizebox{0.95\linewidth}{!}{
\begin{minipage}{\linewidth}
\begin{align*}
\aCr^\beta_{r,k}(A) \subseteq
\begin{matrix}
  &&  \Cov_{(1+\beta^{-1})r, k}(A) \\
  & \rsubseteq{45} && \rsubseteq{-45} \\
  \Cr^\beta_{r,k}(A) &&&& \aCr^\beta_{\max\{1,2\beta\}(1+\beta^{-1})r, k}(A), \\
  & \rsubseteq{-45} && \rsubseteq{45} \\
  && \aCr^\beta_{(2\beta+1)r, k}(A)
  \end{matrix}%
\end{align*}
\end{minipage}
}
\vspace{1em}\\
where $\Cov_{r, k}(A)\subseteq\mathbb{R}^n$ denotes the multicover bifiltration of $A$ for filtration values $r,k>0$. The parameter $r$ corresponds to the radius parameter in the \v{C}ech complex, and $k$ is a density parameter. See \cref{fig:filtered_nerves_illustrations} for a toy example where the core- and multicover bifiltrations differ.
\item In each filtration degree, the core bifiltration admits a cover consisting of metric balls. Applying the standard nerve lemma, we get that the core bifiltration is homotopy equivalent to the geometric realization of the core \v{C}ech bifiltration $\cC^\beta(A)$. A similar argument also applies to the Delaunay core bifiltration. 
\item Using an approach similar to \cite{blumberg2022stability} we get Prohorov stability results for the core- and Delaunay core bifiltrations.
\item We have performed experiments on synthetic point clouds with noise. For each point cloud, we compute the Delaunay core persistent homology along a line\footnote{The implementation for computing (Delaunay) core persistence along a line slice of the parameter space is available at \url{https://github.com/odinhg/core}.} and calculate the bottleneck distance to a noise-free ground truth. The experiments suggest that the Delaunay core bifiltration might be suitable for analyzing point clouds when noise robustness is desired. Additionally, we have implemented the full Delaunay core bifiltration which is now part of the \texttt{multipers}\footnote{The \texttt{multipers} library is available at \url{https://github.com/DavidLapous/multipers}.} library \cite{multipers}, making it easily accessible for researchers and practitioners. We also include experiments comparing the full Delaunay core bifiltration with a sublevel Delaunay-\v{C}ech bifiltration of a codensity function \cite{alonso2024delaunay}, and with the multicover bifiltration computed using the rhomboid tiling bifiltration \cite{Corbet2023}.
\end{enumerate}

\begin{figure}[h]
    \centering
    \includegraphics[width=0.9\textwidth]{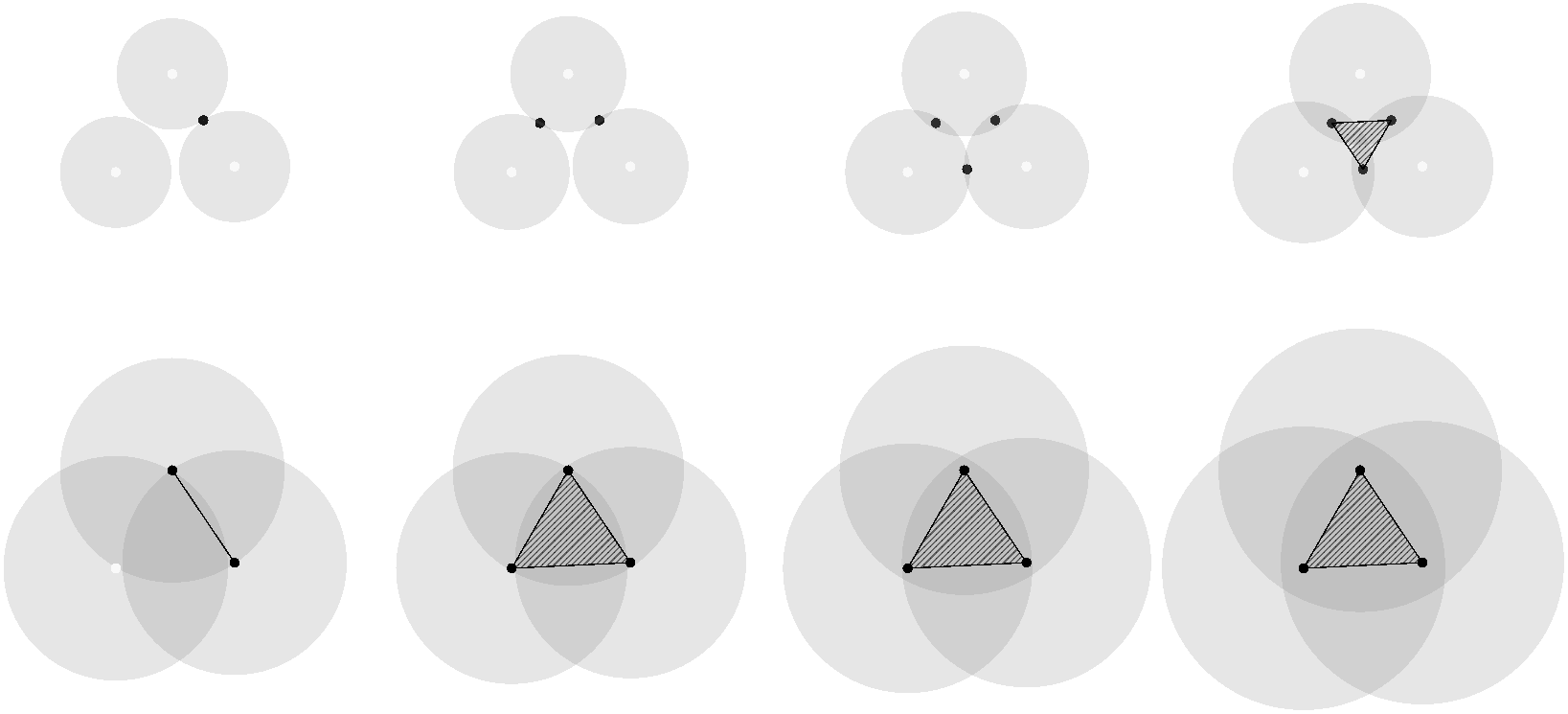}

    \caption{For a fixed $k$, the multicover bifiltration $\Cov_{r,k}(A)$ admits a covering consisting of $k$-intersections of balls of radius $r$. The figure compares the nerve of this covering (top row) and the core \v{C}ech bifiltration (bottom row) for a point cloud $A=\{(0,0), (1, \sqrt{3}), (2.1, 0.1)\}$ in $\mathbb{R}^2$ with $k=2$ fixed. The scale parameter $r$ increases from left to right. Light gray shading represents metric balls centered at points in $A$, while black points, lines, and shaded triangles indicate the simplices in the nerves. In the multicover nerve, the vertices correspond to $k$-intersections of the metric balls, whereas the core \v{C}ech bifiltration has vertices equal to the original point cloud $A$. A scale of $2r$ is used for the core bifiltration to ensure visual alignment between the rows.}
  \label{fig:filtered_nerves_illustrations}
\end{figure}


This manuscript is structured as follows: In Section~\ref{sec:background}, we review some notions relevant to multiparameter persistent homology, including a formal definition of the multicover bifiltration. \cref{sec:core} defines the core dissimilarity and its corresponding core bifiltration and shows the interleaving to the multicover bifiltration. In \cref{sec:alphacore}, we introduce the Delaunay core bifiltration and show how it is related to the core bifiltration and the multicover bifiltration. Stability results are shown in \cref{sec:stability}. In \cref{sec:experiments}, we showcase properties of the Delaunay core bifiltration in some experiments on noisy point cloud datasets. Finally, we conclude the paper in \cref{sec:conclusion}.

\section{Background}\label{sec:background}

Let $P$ be a partially ordered set, or \textbf{poset}. A \textbf{filtration} (of sets) over $P$ is a collection of sets $C=\{C_{p}\}_{p\in P}$ where $C_{p}\subseteq C_{p'}$ whenever $p\leq p'$. For a poset $P$, we let $P^{op}$ denote the \textbf{opposite poset} with the same underlying set $P$ but with the order reversed, i.e., $p\leq q$ in $P^{op}$ if $q\leq p$ in $P$. For posets $P$ and $Q$, we can form the \textbf{product poset} $P\times Q$, where $(p,q)\leq(p',q')$ if both $p\leq p'$ and $q\leq q'$.

A \textbf{simplicial complex} is a pair $(K,V)$ where $V$ is a set (called the \textbf{vertex set}) and $K$ is a set of finite subsets of $V$ (called \textbf{simplices}) such that if $\sigma$ is a simplex in $K$ and $\tau\subseteq\sigma$ then $\tau$ is also a simplex in $K$. A \textbf{filtered simplicial complex} (over $P$) is a collection of simplicial complexes $\{(K_p,V)\}_{p\in P}$ such that $\{K_p\}_{p\in P}$ is a filtration of sets over $P$. We let $(0,\infty)$ denote the set of positive real numbers with the standard total ordering, and $[0,\infty]$ is the extended non-negative real line. 

\begin{definition}[Multicover Bifiltration]\label{def:multicover bifiltration}
    Let $(M,d)$ be a metric space, and $A\subseteq M$ a finite subspace. The \textbf{multicover bifiltration} $\Cov(A)$ on $A$ is the filtration over $(0,\infty)\times(0,\infty)^{op}$ whose sets for (real) parameters $r,k > 0$ are given as 
    \begin{equation} \label{eq:cov}
        \Cov_{r,k}(A)=\{x\in M\,|\,d(a,x)\leq r \textrm{ for at least $k$ points }a\in A\}.
    \end{equation} 
\end{definition}
We use closed balls instead of open balls like they do in \cite[Def.\ 2.7]{blumberg2022stability}. However, as pointed out in \cite[Remark 2.9]{blumberg2022stability}, this does not matter for stability results since the two versions are $0$-interleaved. Note that replacing $k$ by its ceiling $\lceil k \rceil$ in \eqref{eq:cov} so that it reads ``for at least $\lceil k \rceil$ points'', gives an equivalent definition. When we later discuss interleavings, it is convenient not to restrict $k$ to integer values.

\begin{definition}[Dissimilarity]
    A \textbf{dissimilarity (over $[0,\infty]$)} is a function $F:S\times T\to [0,\infty]$ where $S$ and $T$ are sets. 
\end{definition}
An example of a dissimilarity is the (extended) metric $d$ of a metric space $(M,d)$. 
\begin{definition}[Balls]
    For a dissimilarity $F:S\times T\to [0,\infty]$, the \textbf{(closed) $F$-ball} around $s\in S$ with radius $r\geq 0$ is the set
    \begin{equation*}
        B_F(s,r) = \{t\in T\,|\, F(s,t)\leq r\}.
    \end{equation*}
\end{definition}

We can construct a filtered simplicial complex from a dissimilarity as follows: 
\begin{definition}[Dowker Nerve]
    The \textbf{Dowker nerve} $DF=\{(DF_r,S)\}_{r\in(0,\infty)}$ of a dissimilarity $F:S\times T\to [0,\infty]$ is the filtered simplicial complex where
\begin{equation*}
    DF_r = \left\{\sigma\subseteq S \textrm{ finite} \,|\, \textrm{there exists } t\in T \textrm{ such that } F(s,t) \leq r \textrm{ for all } s\in\sigma\right\}.
\end{equation*}
\end{definition}


For a topological space $X$, a \textbf{cover} is a collection of subsets $\mathcal{U}=\{U_i\}_{i\in I}$ of $X$ such that $\bigcup_i U_i = X$. The cover $\mathcal{U}$ is said to be \textbf{good} if the intersection $\bigcap_{j\in J} U_j$ is either empty or contractible for every finite subset $J\subseteq I$.  The cover is \textbf{closed} it consists of closed sets. If $X\subseteq\mathbb{R}^n$ is a Euclidean subspace, we say that the cover $\mathcal{U}$ is \textbf{convex} if it consists of convex sets. Note that every convex cover is good.
\begin{definition}[Nerve of a Cover]
    Let $\mathcal{U}=\{U_i\}_{i\in I}$ be a cover of a space $X$. The \textbf{nerve} of $\mathcal{U}$ is the simplicial complex $(N\mathcal{U},I)$ where $N\mathcal{U}$ consists of the finite subsets $J$ of $I$ with the property that the intersection $\bigcap_{j \in J} U_j$ is nonempty.
\end{definition}

\begin{lemma}[Functorial Nerve Lemma {\cite[Thm.\ 5.9 and Thm.\ 3.9]{Bauer_2023}}]\label{lemma:nerve}
    Let $\mathcal{U}=\{U_i\}_{i\in I}$ be a finite closed and convex cover of a Euclidean subspace $X\subseteq \mathbb{R}^n$. There exists a homotopy equivalence $\rho_X:X\to |N\mathcal{U}|$ from $X$ to the geometric realization $|N\mathcal{U}|$ of the nerve $N\mathcal{U}$. Furthermore, if $Y\subseteq X$ is a subspace with a finite closed and convex cover $\mathcal{V}=\{V_i\}_{i\in I}$ such that $V_i\subseteq U_i$ for all $i\in I$, then the following diagram commutes up to homotopy:
    \begin{equation}\label{nervediagram}
        \begin{tikzcd}
            Y\arrow[d,hook]\arrow[r,"\rho_Y"] & \vert N\mathcal{V} \vert\arrow[d,hook]\\
            X\arrow[r,"\rho_X"]& \vert N\mathcal{U} \vert.
        \end{tikzcd}
    \end{equation}
\end{lemma}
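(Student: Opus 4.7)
The plan is to construct $\rho_X$ explicitly from a partition of unity subordinate to the cover $\mathcal{U}$, deduce that $\rho_X$ is a homotopy equivalence from the classical nerve lemma, and then exhibit a straight-line homotopy making diagram \eqref{nervediagram} commute up to homotopy. First I would slightly enlarge each closed convex $U_i$ to an open convex neighborhood $U_i'$ inside $X$, done so that the open cover $\{U_i'\}_{i\in I}$ has the same nerve as $\{U_i\}_{i\in I}$—this is possible because the cover is finite and every nonempty finite intersection of the $U_i$ is convex, so there is enough room to enlarge without creating new nonempty intersections. Pick a subordinate partition of unity $\{\psi_i\}_{i \in I}$ on $X$ and define
\[ \rho_X(x) = \sum_{i \in I} \psi_i(x)\, e_i, \]
where $e_i$ is the vertex of $|N\mathcal{U}|$ labelled by $i$. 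Since $\psi_i(x) > 0$ forces $x \in U_i$, the support of $\psi(x)$ spans a simplex of $N\mathcal{U}$, so $\rho_X$ is well-defined. That $\rho_X$ is a homotopy equivalence is the standard nerve lemma for good covers: all finite nonempty intersections $U_\sigma = \bigcap_{i \in \sigma} U_i$ are convex, hence contractible.

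For the functorial part, construct $\rho_Y : Y \to |N\mathcal{V}|$ analogously from a partition of unity $\{\phi_i\}_{i \in I}$ on $Y$ subordinate to an open thickening of $\mathcal{V}$. I would then compare the two composites in diagram \eqref{nervediagram} directly in $|N\mathcal{U}|$. For any $y \in Y$ both $\rho_X(y)$ and $|N\iota|\circ \rho_Y(y)$ are convex combinations of vertices $e_i$ for $i \in \{j : y \in U_j\}$: the former because $\psi_i(y) > 0 \Rightarrow y \in U_i$, the latter because $\phi_i(y) > 0 \Rightarrow y \in V_i \subseteq U_i$. The index set $\{j : y \in U_j\}$ is itself a simplex of $N\mathcal{U}$, since its intersection contains $y$. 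Consequently the straight-line homotopy
\[ H(y,t) \;=\; (1-t)\,\rho_X(y) \,+\, t \cdot |N\iota|\bigl(\rho_Y(y)\bigr) \]
stays inside a single closed simplex of $|N\mathcal{U}|$ for every $y$, and thus provides the desired homotopy making \eqref{nervediagram} commute up to homotopy.

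The main obstacle I expect is making the thickening of the closed cover precise while preserving the nerve—this is where finiteness, closedness, and convexity all genuinely enter. An alternative that sidesteps partitions of unity is to introduce the Mayer--Vietoris blow-up $M\mathcal{U} = \bigl(\bigsqcup_{\sigma \in N\mathcal{U}} U_\sigma \times |\sigma|\bigr)/{\sim}$, and show that both projections $M\mathcal{U} \to X$ and $M\mathcal{U} \to |N\mathcal{U}|$ are homotopy equivalences by invoking contractible fibers (convexity of $U_\sigma$ on one side, simplex structure on the other). Functoriality is then automatic from the natural map $M\mathcal{V} \to M\mathcal{U}$ induced by $V_i \subseteq U_i$ and $Y \subseteq X$. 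This second route is essentially the argument formalized in \cite{Bauer_2023}, and it avoids the delicate thickening step at the cost of setting up the blow-up construction.
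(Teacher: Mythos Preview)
The paper does not give its own proof of this lemma: it is stated with a citation to \cite[Thm.\ 5.9 and Thm.\ 3.9]{Bauer_2023} and used as a black box. So there is nothing to compare your argument against except that reference itself.

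Your second route (the Mayer--Vietoris blow-up) is essentially the argument of \cite{Bauer_2023}, as you note, and is the clean way to get functoriality. Your first route via partitions of unity is morally standard, but the thickening step you flag as the main obstacle is a genuine gap as written. Two disjoint closed convex sets in $\mathbb{R}^n$ can have distance zero (e.g.\ the epigraph of $e^{-x}$ and the lower half-plane), so ``enough room to enlarge without creating new nonempty intersections'' is not automatic from finiteness and convexity alone; you would need to exploit that the $U_i$ actually cover $X$ and argue relative to $X$ rather than in the ambient $\mathbb{R}^n$. Also note that once you pass to the open thickening $\{U_i'\}$, the implication you use is $\psi_i(x)>0 \Rightarrow x\in U_i'$, not $x\in U_i$; this is harmless for well-definedness of $\rho_X$ since $N\mathcal{U}'=N\mathcal{U}$ by construction, but for the straight-line homotopy you then need $V_i'\subseteq U_i'$ (not just $V_i\subseteq U_i$) to guarantee that the two barycentric points for $y\in Y$ lie in a common simplex of $|N\mathcal{U}|$. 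That compatibility of thickenings has to be built in from the start. None of this is fatal, but it is exactly why the blow-up formulation is preferred in \cite{Bauer_2023}: it sidesteps all of these choices.
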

We observe that taking homology of the diagram \eqref{nervediagram} gives a diagram that commutes strictly, where the two horizontal maps are isomorphisms.

\section{Core Bifiltration}\label{sec:core}
In this section, we introduce the core bifiltration, using a specific family of dissimilarities depending on a fixed real parameter $\beta>0$. This parameter plays a similar role to the $\alpha$ used in the robust single-linkage clustering algorithm of \cite{chaudhuri2010_cluster_tree}, decoupling the metric from the $k$-nearest neighbor distance by a multiplicative constant. We also examine the Dowker nerves of these dissimilarities. The core bifiltration is a filtered space that is interleaved with the multicover bifiltration.



\begin{definition}[Core Distance {\cite[Def.\ 5]{hdbscan}}]
    Let $A\subseteq(M,d)$ be a finite metric subspace, let $k > 0$ and let $x\in M$. The \textbf{$k$-core distance} $\Core^A_k(x)$ of $x$ to $A$ is the distance from $x$ to one of its $\lceil k\rceil$-th nearest neighbors in $A$. We use the convention that, if $k>|A|$, then the $k$-core distance is infinite. 
\end{definition}

In particular, we note that if $a\in A$ and $0<k\leq 1$, then the $k$-core distance $\Core^A_k(a)$ is zero since $a$ is its own nearest neighbor. Furthermore, it can be helpful to keep in mind the following three equivalent conditions related to the core distance as we will use them in later proofs: Let $A\subseteq(M,d)$ be a finite metric subspace, then for all $x\in M$ and $r,k>0$, we have $\Core^A_k(x) \leq r$ if and only if there exist $a_1,\ldots, a_{\lceil k\rceil}\in A$ such that $d(a_i,x) \leq r$ for all $a_i$ if and only if $\vert B_d(x,r)\cap A\vert\geq k$.


For HDBSCAN, the \textbf{mutual reachability distance} $G_k:A\times A\to [0,\infty]$, where 
\begin{equation}\label{mutualreach}
    G_k(a,a')=\max\left\{\Core^A_k(a),\Core^A_k(a'),d(a,a')\right\},
\end{equation}
is used in one of the main steps in the clustering algorithm {\cite[Def.\ 7 and Alg.\ 1]{hdbscan}}. We look at directed versions of this distance equipped with a scale parameter $\beta>0$.
\begin{definition}[Core Dissimilarity]\label{coredissdef}
    Let $A\subseteq(M,d)$ be a finite metric subspace, fix $\beta>0$ and let $k>0$. The \textbf{$k$-core dissimilarity} of $A$ in $M$ is the dissimilarity
    $\Lambda^\beta_k\colon A\times M\to [0,\infty]$ given by
    \begin{equation*}
    \Lambda^\beta_k(a,x)=\max\left\{\beta\Core^A_k(a),d(a,x)\right\}.
    \end{equation*}
\end{definition}

Following our convention, we denote balls of radius $r$ centered at $a$ with respect to the core dissimilarity by
\begin{equation}\label{lambdaballs}
    B^\beta_{r,k}(a):=B_{\Lambda^\beta_k}(a,r)=\{x \in M \,|\, \Lambda^\beta_k(a,x) \leq r\}.
\end{equation}

Taking the union of such balls over $A$, we get a bifiltration.
\begin{definition}[Core Bifiltration]\label{corebifdef}
    Let $(M,d)$ be a metric space, $A\subseteq M$ a finite subset and fix $\beta>0$. The \textbf{core bifiltration} $\Cr^\beta(A)$ on $A$ is a filtration over $(0,\infty)\times(0,\infty)^{op}$ whose sets $\Cr^\beta_{r,k}(A)$ for (real) parameters $r,k>0$ are given as the union of all $\Lambda^\beta_k$-balls of radius $r$, i.e.,
\begin{equation*}
    \Cr^\beta_{r,k}(A)= \bigcup_{a\in A}B^\beta_{r,k}(a).
\end{equation*}
\end{definition}

Although we fix the parameter $\beta$ in this paper, allowing $\beta$ to vary yields a trifiltration, i.e., a filtration over $(0,\infty)\times(0,\infty)^{op}\times(0,\infty)^{op}$ since $\Lambda^\beta_k(a,x)\leq\Lambda^{\beta'}_k(a,x)$ whenever $\beta\leq\beta'$. The \textbf{$r$-thickening} of a metric subspace $A\subseteq(M,d)$ is the union of closed metric balls of radius $r\geq0$ centered at the points in $A$. Geometrically, $\Cr^\beta_{r,k}(A)$ is then the $r$-thickening of the points in $A$ having at least $k$ neighbors (including itself) in $A$ within distance $r/\beta$ (see \cref{fig:union_of_balls}). In particular, $\Cr^\beta_{r,k}(A)$ is covered by metric balls, and in the case where $M=\mathbb{R}^n$ with the Euclidean metric $d = d_E$, the intersections of such balls are either empty or contractible. In particular, the collection of balls $\mathcal{B}^\beta_{r,k}=\{B^\beta_{r,k}(a)\}_{a\in A}$ forms a closed and convex cover of $\Cr^\beta_{r,k}(A)$.

\begin{figure}[h]
    \centering
    \includegraphics[width=0.6\textwidth]{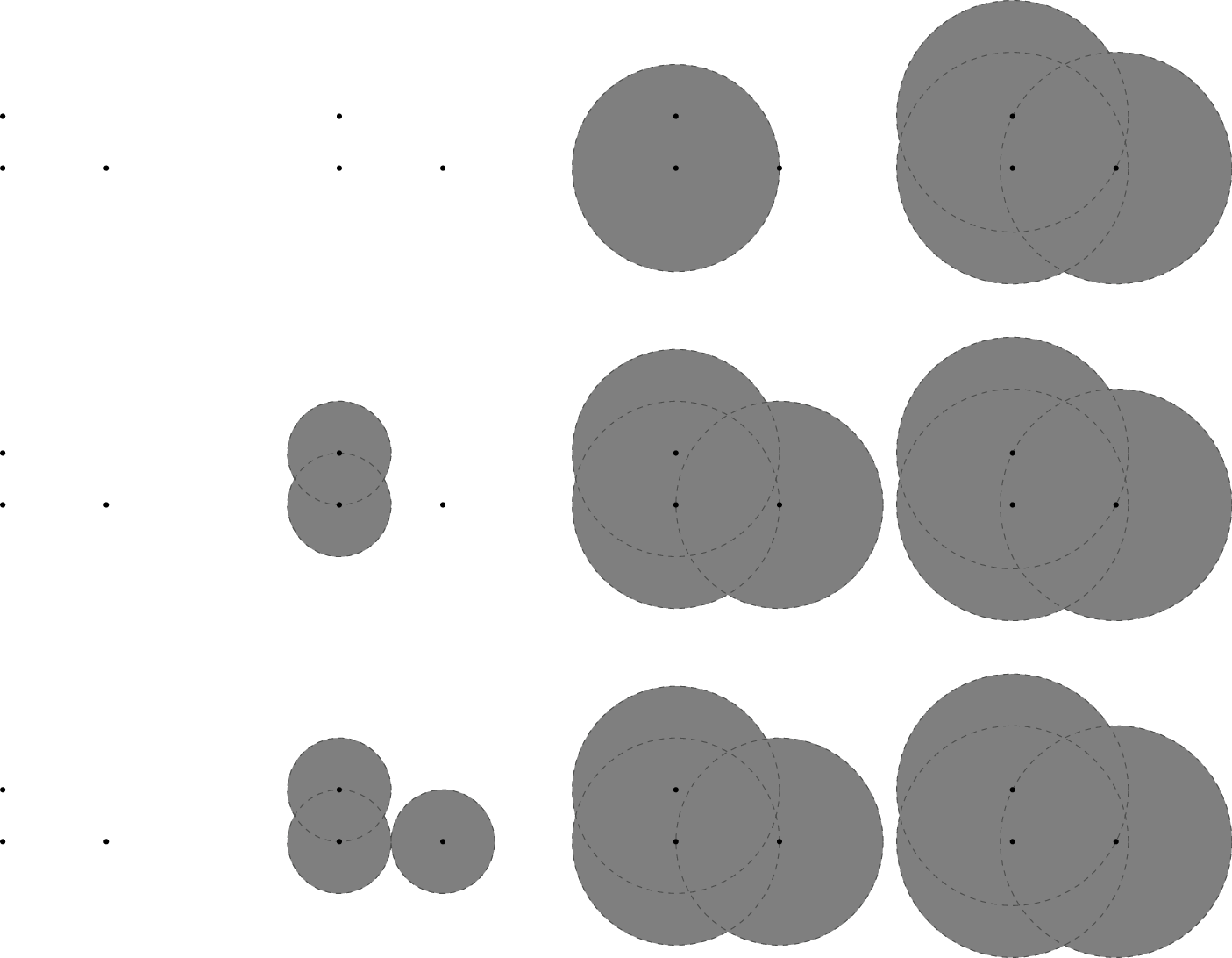}
    \caption{The core bifiltration $\Cr^1(A)$ with the union of $\Lambda_{k,1}$-balls shown in gray for different radii (increasing from left to right) and values of $k$ where $A=\{(0,0), (0, 1), (2, 0)\}\subseteq\mathbb{R}^2$. The top, middle and bottom rows correspond to $k=3$, $k=2$ and $k=1$, respectively. The point cloud $A$ and the boundaries of the metric balls are shown for clarity.}
    \label{fig:union_of_balls}
\end{figure}

\begin{definition}[Core \v{C}ech Bifiltration]
    Let $A\subseteq(M,d)$ finite metric subspace and fix $\beta>0$. The \textbf{core \v{C}ech bifiltration} $\cC^\beta(A)=\{(\cC^\beta_{r,k}(A),A)\}_{r,k>0}$ on $A$ is the filtered simplicial complex over $(0,\infty)\times(0,\infty)^{op}$ where $\cC^\beta_{r,k}(A)=D(\Lambda^\beta_k)_r=N\mathcal{B}^\beta_{r,k}$.
\end{definition}
Let us record the above discussion in a lemma, using the Nerve Lemma (\cref{lemma:nerve}).

\begin{lemma}\label{lemma:corecechequiv}
    Let $A\subseteq\mathbb{R}^n$ be a finite Euclidean subspace and $\beta>0$. For each $r,k>0$, the space $\Cr^\beta_{r,k}(A)\subseteq \mathbb{R}^n$ is homotopy equivalent to the geometric realization of the simplicial complex $\cC^\beta_{r,k}(A)$. 
\end{lemma}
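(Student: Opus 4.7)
The plan is to invoke the Functorial Nerve Lemma (\cref{lemma:nerve}) applied to the cover $\mathcal{B}^\beta_{r,k} = \{B^\beta_{r,k}(a)\}_{a \in A}$ of $\Cr^\beta_{r,k}(A)$. The paper already notes that $\Cr^\beta_{r,k}(A)$ is covered by these balls and that the cover is closed and convex when $M = \mathbb{R}^n$, so the main task is simply to verify the hypotheses of the nerve lemma carefully and identify the resulting nerve with $\cC^\beta_{r,k}(A)$.

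First I would unpack the ball $B^\beta_{r,k}(a) = \{x \in \mathbb{R}^n : \max\{\beta\,\Core^A_k(a), d_E(a,x)\} \leq r\}$ to show it has one of two forms: if $\beta\,\Core^A_k(a) > r$, then $B^\beta_{r,k}(a) = \emptyset$; if $\beta\,\Core^A_k(a) \leq r$, then $B^\beta_{r,k}(a)$ is exactly the closed Euclidean ball of radius $r$ around $a$. In either case the set is closed and convex, and since $A$ is finite, $\mathcal{B}^\beta_{r,k}$ is a finite closed convex cover of $\Cr^\beta_{r,k}(A)$ (discarding or keeping the empty sets does not affect the nerve).

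Next I would identify the two simplicial complexes. By definition of the Dowker nerve, a finite $\sigma \subseteq A$ lies in $D(\Lambda^\beta_k)_r = \cC^\beta_{r,k}(A)$ iff there exists $x \in \mathbb{R}^n$ with $\Lambda^\beta_k(a,x) \leq r$ for all $a \in \sigma$, which is exactly the condition that $\bigcap_{a \in \sigma} B^\beta_{r,k}(a) \neq \emptyset$. This is the defining condition for $\sigma$ to be a simplex of $N\mathcal{B}^\beta_{r,k}$, so $\cC^\beta_{r,k}(A) = N\mathcal{B}^\beta_{r,k}$ as simplicial complexes. Applying \cref{lemma:nerve} then yields a homotopy equivalence $\rho_{\Cr^\beta_{r,k}(A)}\colon \Cr^\beta_{r,k}(A) \to \vert N\mathcal{B}^\beta_{r,k} \vert = \vert \cC^\beta_{r,k}(A) \vert$, which is what we want.

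I do not expect any real obstacle: the structure of the $\Lambda^\beta_k$-balls as either empty or standard Euclidean balls is immediate from \cref{coredissdef}, and the identification of the nerve with the Dowker nerve is tautological from the definitions. The only mild subtlety is the bookkeeping of potentially empty balls in the cover, but since an empty cover member contributes no simplices (not even its vertex) to either nerve, this causes no issue.
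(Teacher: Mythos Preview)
Your proposal is correct and matches the paper's approach exactly: the paper records this lemma as an immediate consequence of the preceding discussion that $\mathcal{B}^\beta_{r,k}$ is a finite closed convex cover of $\Cr^\beta_{r,k}(A)$ with nerve equal (by definition) to $\cC^\beta_{r,k}(A)$, and then invokes \cref{lemma:nerve}. Your write-up is somewhat more explicit about the two cases for $B^\beta_{r,k}(a)$ and the empty-ball bookkeeping, but there is no substantive difference.
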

Note that the functorial part of the Nerve Lemma \ref{lemma:nerve} gives that these degreewise homotopy equivalences induce isomorphisms between the persistent homology groups of $\Cr^\beta(A)$ and $\cC^\beta(A)$.


To connect the core bifiltration with the multicover bifiltration, we consider a second directed version of the mutual reachability distance \eqref{mutualreach}, namely the dissimilarity $\Gamma_k:A\times M\to [0,\infty]$ where 
\begin{equation}\label{def:gamma}
\Gamma_k(a,x)=\max\left\{\Core^A_k(x),d(a,x)\right\}.
\end{equation}
This dissimilarity is slightly less well-behaved, compared to $\Lambda^\beta_k$, as the balls $B_{\Gamma_k}(a,r)$ no longer have to be contractible even in the Euclidean case (see \cref{fig:gamma_balls}). However, it is of interest for the following reason:

\begin{lemma}\label{covlemma}
    Let $A\subseteq(M,d)$ be a finite subset and let $r,k>0$. The union of balls $\bigcup_{a\in A} B_{\Gamma_{k}}(a,r)$ is exactly the set $\operatorname{Cov}_{r,k}(A)$ from the multicover bifiltration.
\end{lemma}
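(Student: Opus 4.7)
The plan is to prove both inclusions by unpacking the definitions and using the equivalent characterization of the core distance noted earlier: for $x \in M$ and $r, k > 0$, one has $\Core^A_k(x) \leq r$ if and only if $|B_d(x,r) \cap A| \geq k$.

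First I would rewrite $B_{\Gamma_k}(a, r)$. Directly from the definition of $\Gamma_k$ in \eqref{def:gamma},
\[
B_{\Gamma_k}(a, r) = \{x \in M \mid d(a, x) \leq r \text{ and } \Core^A_k(x) \leq r\}.
\]
Taking the union over $a \in A$ then gives
\[
\bigcup_{a \in A} B_{\Gamma_k}(a, r) = \{x \in M \mid \Core^A_k(x) \leq r \text{ and } d(a, x) \leq r \text{ for some } a \in A\}.
\]

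For the forward inclusion, suppose $x \in B_{\Gamma_k}(a, r)$ for some $a \in A$. Then $\Core^A_k(x) \leq r$, which by the equivalence above means $|B_d(x, r) \cap A| \geq k$; that is, there are at least $k$ points of $A$ within distance $r$ of $x$, so $x \in \Cov_{r,k}(A)$.

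For the reverse inclusion, suppose $x \in \Cov_{r,k}(A)$, so there exist at least $k$ points of $A$ within distance $r$ of $x$. Since $k > 0$, this set is nonempty, so we can pick any such point $a \in A$; it satisfies $d(a, x) \leq r$ and, by the equivalent characterization, $\Core^A_k(x) \leq r$. Hence $\Gamma_k(a, x) \leq r$ and $x \in B_{\Gamma_k}(a, r)$. There is no real obstacle here — the only subtlety is remembering that a witness $a \in A$ with $d(a, x) \leq r$ is guaranteed precisely because $k > 0$ forces at least $\lceil k \rceil \geq 1$ neighbors; this is what allows us to construct the required ball, and the rest is a straightforward tautology on the definitions.
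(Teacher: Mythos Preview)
Your proof is correct and follows essentially the same approach as the paper: both directions rely on the equivalence $\Core^A_k(x)\leq r \iff |B_d(x,r)\cap A|\geq k$, and the reverse inclusion hinges on the observation that $k>0$ (equivalently $\lceil k\rceil\geq 1$) guarantees at least one witness $a\in A$ with $d(a,x)\leq r$. Your version is slightly more explicit in unpacking $B_{\Gamma_k}(a,r)$ and the union, but the argument is the same.
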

\begin{proof}
    Observe that asking for $x\in M$ to satisfy $\Core^A_k(x) \leq r$ is equivalent to the existence of at least $\lceil k\rceil$ points $a_i$ in $A$ satisfying $d(a_i,x) \leq r$. Now, if $x\in B_{\Gamma_k}(a,r)$ for some $a\in A$, we have that $\Core^A_k(x) \leq r$ so it follows from our observation that $x\in\Cov_{r,k}(A)$. Conversely, if $x\in\Cov_{r,k}(A)$ then again by the above observation we have that $\Core^A_k(x) \leq r$, and $d(a,x) \leq r$ for at least one $a\in A$ since $\lceil k\rceil\geq 1$.
\end{proof}

\begin{figure}[h]
    \centering
    \includegraphics[width=\textwidth]{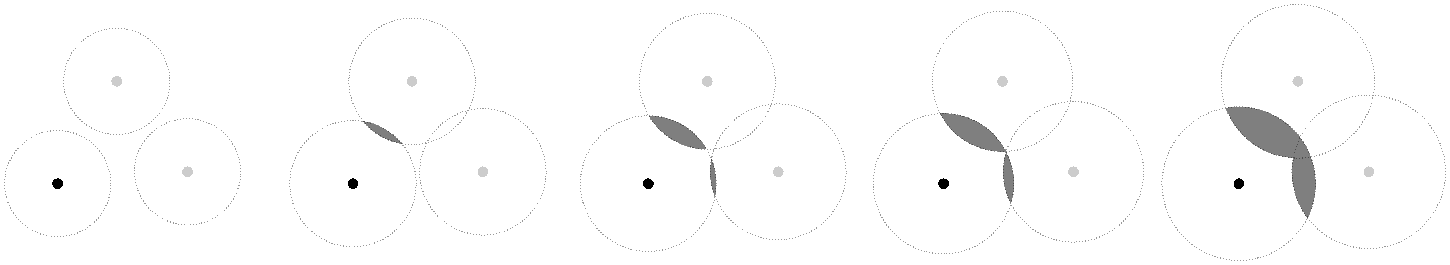}
    \caption{The $\Gamma_2$-ball $B_{\Gamma_2}((0,0), r)$ shown in gray as $r$ increases from left to right, with $A=\{(0,0), (1, \sqrt{3}), (2.1, 0.1)\}$.}
    \label{fig:gamma_balls}
\end{figure}

So far, we have introduced the core dissimilarities $\Lambda^\beta_k$ from which we defined the core bifiltration $\Cr^\beta(A)$ (by taking unions of closed balls), and the core \v{C}ech bifiltration $\cC^\beta(A)$ (by taking the Dowker nerves). They are degreewise homotopy equivalent in the Euclidean case (Lemma \ref{lemma:corecechequiv}). We now present an interleaving between the multicover bifiltration $\Cov(A)$ and the core bifiltration $\Cr^\beta(A)$. 
\begin{theorem}\label{corecovtheorem}
    Let $(M,d)$ be a metric space with a finite subspace $A\subseteq M$ and let $\beta>0$. The core bifiltration $\Cr^\beta(A)$ and the multicover bifiltration $\Cov(A)$ are interleaved as follows:
    $$
    i)\,\Cr^\beta_{r, k}(A)
    \subseteq
    \Cov_{(1+\beta^{-1})r, k}(A)
    \quad
    \textrm{and}
    \quad
    ii)\,\Cov_{r, k}(A)    
    \subseteq
    \Cr^\beta_{\max\{1,2\beta\}r,k}(A).
    $$
\end{theorem}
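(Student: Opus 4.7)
The plan is to prove each inclusion by a direct triangle-inequality argument, exploiting the three equivalent characterizations of the $k$-core distance already recalled in the paper: $\Core^A_k(y)\le s$ iff the closed ball $B_d(y,s)$ contains at least $k$ points of $A$ iff there exist $\lceil k\rceil$ points $a_1,\dots,a_{\lceil k\rceil}\in A$ with $d(a_i,y)\le s$.

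For inclusion $i)$, I would start from $x\in\Cr^\beta_{r,k}(A)$. Unpacking the definitions, this gives some $a\in A$ with $\Lambda^\beta_k(a,x)\le r$, hence simultaneously $d(a,x)\le r$ and $\Core^A_k(a)\le r/\beta$. The second condition produces $\lceil k\rceil$ points $a_1,\dots,a_{\lceil k\rceil}\in A$ within distance $r/\beta$ of $a$; the triangle inequality then gives $d(a_i,x)\le d(a_i,a)+d(a,x)\le r/\beta+r=(1+\beta^{-1})r$, so $|B_d(x,(1+\beta^{-1})r)\cap A|\ge k$, which is exactly $x\in\Cov_{(1+\beta^{-1})r,k}(A)$.

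For inclusion $ii)$, I would start from $x\in\Cov_{r,k}(A)$, which yields $\lceil k\rceil$ points $a_1,\dots,a_{\lceil k\rceil}\in A$ with $d(a_i,x)\le r$. The natural candidate for the witnessing point in $\Cr^\beta$ is $a:=a_1$. We already have $d(a_1,x)\le r$, and by the triangle inequality $d(a_i,a_1)\le d(a_i,x)+d(x,a_1)\le 2r$ for every $i$, so $|B_d(a_1,2r)\cap A|\ge k$ and hence $\Core^A_k(a_1)\le 2r$. Therefore
\begin{equation*}
\Lambda^\beta_k(a_1,x)=\max\bigl\{\beta\Core^A_k(a_1),\,d(a_1,x)\bigr\}\le\max\{2\beta r,\,r\}=\max\{1,2\beta\}\,r,
\end{equation*}
so $x\in\Cr^\beta_{\max\{1,2\beta\}r,k}(A)$.

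There is no serious obstacle here; the only subtle point is the factor $\max\{1,2\beta\}$ in $ii)$, which encodes the fact that for large $\beta$ the bound on $\Core^A_k(a_1)$ dominates, while for $\beta\le 1/2$ the metric term $d(a_1,x)$ dominates. I expect the trickiest part for the reader will simply be to verify that the bookkeeping with $\lceil k\rceil$ is consistent with the convention that $k$ is a positive real rather than an integer, but this is taken care of by the remark after \cref{def:multicover bifiltration}.
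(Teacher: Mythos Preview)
Your proof is correct and follows essentially the same triangle-inequality argument as the paper. The only cosmetic difference is in part $ii)$: the paper routes through \cref{covlemma} to obtain the witness $a\in A$ with $d(a,x)\le r$ and $\Core^A_k(x)\le r$, whereas you take $a:=a_1$ directly from the $\lceil k\rceil$ points furnished by the definition of $\Cov_{r,k}(A)$; both choices lead to the same estimate.
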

\begin{proof}    
    \textit{i)} If $x$ is a point in $\Cr^\beta_{r,k}(A)$, then there exists some $a$ in $A$ with $\Core^A_k(a) \leq r/\beta$ and $d(a,x) \leq r$. 
    We can pick ${\lceil k\rceil}$ points $a_1, \dots, a_{\lceil k\rceil}$ in $A$ whose distance to $a$ is less than or equal to $r/\beta$.
    For these points, we have $d(a_i,x)\leq d(a_i,a)+d(a,x) \leq r/\beta + r$, so the $k$-core distance $\Core^A_k(x)$ is less than or equal to $(1+\beta^{-1})r$ and $x$ is in $\Cov_{(1+\beta^{-1})r,k}(A)$.

    \textit{ii)} If $x$ is a point in $\Cov_{r,k}(A)$, then by \cref{covlemma} it is in some $\Gamma_k$-ball $B_{\Gamma_{k}}(a,r)$. In particular, $d(a,x) \leq r$ and $\Core^A_k(x) \leq r$.
    This means that there are at least $\lceil k\rceil$ points $a_1,a_2, \dots, a_{\lceil k\rceil}$ in 
    $A$ whose distance to $x$ is less than or equal to $r$.
    Now, $d(a_i,a)\leq d(a_i,x)+d(a,x) \leq 2r$, so the ${\lceil k\rceil}$ points $a_1, \dots, a_{\lceil k\rceil}$ have distance to $a$ less than or equal to $2r$. Thus, $\beta\Core^A_k(a) \leq 2\beta r$, and the point $x$ is in $B^\beta_{r',k}(a)\subseteq\Cr^\beta_{r',k}(A)$ where $r'=\max\{1,2\beta\}r$.
\end{proof}

It is worth noting that the interleaving factors in \cref{corecovtheorem} are minimized in terms of their product by setting $\beta=1/2$. In this case, the product of the two interleaving factors is 3. This also applies to \cref{alphacoremulticov}. The following example relates the core bifiltration to the degree-\v{C}ech bifiltration and the well-known degree-Rips bifiltration.

\begin{example}
Let $A\subset(M,d)$ be a finite metric subspace and let $\check{C}_r(A)$ denote the usual \v{C}ech complex on $A$ in filtration degree $r$. The degree-\v{C}ech bifiltration \cite{alonso2024probabilistic, blumberg2022stability} $\mathcal{D}\check{C}(A)$ is defined, in filtration degree $(r,k)$, as the maximal subcomplex of $\check{C}_r(A)$ whose vertices have degree at least $k-1$ in the $1$-skeleton of $\check{C}_r(A)$. The degree-Rips bifiltration is constructed similarly by simply replacing the \v{C}ech filtration with the usual Rips filtration. We recover the degree-\v{C}ech bifiltration via the core \v{C}ech bifiltration by setting $\beta=1/2$, i.e.,
$$
\mathcal{D}\check{C}_{r,k}(A)=\cC^{1/2}_{r,k}(A).
$$
To see this, let $\sigma\subseteq A$ be finite. The existence of an $x\in M$ satisfying $d(a,x)\leq r$ for all $a\in\sigma$ is equivalent to having $\sigma\in\check{C}_r(A)$. Moreover, the condition $\Core^A_k(a)\leq2r$ is equivalent to $a$ having at least $k$ neigbors in $A$ within a radius of $2r$, or in other words $a$ being of degree at least $k-1$ in the $1$-skeleton of $\check{C}_r(A)$.

It is also possible to define the Rips nerve $RF=\{(RF_r,S)\}_{r\in(0,\infty)}$ of a dissimilarity $F\colon S\times T\to [0,\infty]$. A simplex $\sigma\subseteq S$ is in $RF_r$ if all subsets of $\sigma$ with cardinality at most two are in $DF_r$. We then recover the degree-Rips bifiltration on $A$ as the Rips nerve of $\Lambda_{k,1/2}\colon A\times M\to [0,\infty]$.
\end{example}

\section{Delaunay Core Bifiltration}\label{sec:alphacore}
We now use a standard approach of intersecting with Voronoi cells, to get a smaller variation of the core bifiltration.

\begin{definition}
    Let $A\subseteq\mathbb{R}^n$ be a finite Euclidean subspace. The \textbf{Voronoi cell} $\Vor_A(a)$ of $a\in A$ is the set of all points in $\mathbb{R}^n$ that are at least as close to $a$ as to any other point in $A$, namely,
    \begin{equation*}
        \Vor_A(a)=\{x\in\mathbb{R}^n \,|\, d(a,x)\leq d(a',x) \textrm{ for all } a'\in A\}.
    \end{equation*}
\end{definition}
Each Voronoi cell is closed and convex, so the collection $\Vor(A)=\{\Vor_A(a)\}_{a\in A}$ forms a closed and convex cover of $\mathbb{R}^n$. The nerve of $\Vor(A)$ is called the \textbf{Delaunay complex} of $A$. We recall that the balls $B^\beta_{r,k}(a)$ (from \eqref{lambdaballs}) covering the core bifiltration $\Cr^\beta_{r,k}(A)$ are either empty or closed metric balls $B_d(a,r)$. Euclidean balls are convex, so the closed \textbf{Voronoi balls}
\begin{equation*}
    W^\beta_{r,k}(a):=B^\beta_{r,k}(a)\cap \Vor_A(a)
\end{equation*}
are either empty or convex, and thus they form a closed and convex cover of their union.
\begin{definition}
    Let $A\subseteq\mathbb{R}^n$ be a finite Euclidean subspace and let $\beta>0$. The \textbf{Delaunay core bifiltration} $\aCr^\beta(A)$ of $A$ is the bifiltration given by the union of Voronoi balls:
    \begin{equation*}
        \aCr^\beta_{r,k}(A)=\bigcup_{a\in A} W^\beta_{r,k}(a).
    \end{equation*}
\end{definition}
The nerve of the cover $\mathcal{W}^\beta_{r,k}=\{W^\beta_{r,k}(a)\}_{a\in A}$ is homotopy equivalent to $\aCr^\beta_{r,k}(A)$ by the Nerve Lemma (\cref{lemma:nerve}), and it is contained in the Delaunay complex $N(\Vor(A))$ whose cardinality is $\mathcal{O}(|A|^{\lceil \frac{n}{2} \rceil})$ \cite{klee_1964,Seidel1995}.

We follow \cite{lesnicknotes23}, for a notion of size relevant to the multiparameter setting. Let $\mathcal{K}=\{(K_p,V)\}_{p\in P}$ be a filtered simplicial complex over $P$, and let $\bigcup\mathcal{K}$ denote the union $\bigcup_{p\in P}K_p$. For a simplex $\sigma\in\bigcup\mathcal{K}$, define the \textbf{birth set} $b(\sigma)$ of $\sigma$ to be the smallest subset $S\subseteq P$ satisfying $\sigma\in K_p$ if and only if $p\geq s$ for some $s\in S$. Define the \textbf{size} of $\mathcal{K}$ as the sum $\sum_{\sigma\in\bigcup\mathcal{K}}|b(\sigma)|$. Since $\Core^A_k(a)=\infty$ for all $a\in A$ whenever $k>|A|$, we have that $|b(\sigma)|\leq |A|$ and thus the size of $\{N(\mathcal{W}^\beta_{r,k})\}_{(r,k)}$ is $\mathcal{O}(|A|^{\lceil \frac{n}{2} \rceil+1})$. This is comparable to the sublevel Delaunay-\v{C}ech bifiltration from \cite{alonso2024delaunay} of size $\mathcal{O}(|A|^{\lceil\frac{n+1}{2}\rceil})$. In contrast, the $m$-skeletons of the degree-Rips and core \v{C}ech bifiltrations are of size $\mathcal{O}(|A|^{m+2})$.

Unlike the standard alpha complex, which is homotopy equivalent to the \v{C}ech complex, the same is not the case for the Delaunay core and the core bifiltrations. However, they are interleaved. 

\begin{lemma}\label{alphatheorem}
    Let $A\subseteq\mathbb{R}^n$ be a finite Euclidean subspace and let $\beta>0$. The core bifiltration $\Cr^\beta(A)$ and the Delaunay core bifiltration $\aCr^\beta(A)$ are interleaved as follows:
    \begin{equation*}
        \aCr^\beta_{r,k}(A)\subseteq \Cr^\beta_{r,k}(A)\subseteq\aCr^\beta_{(2\beta+1)r,k}(A).
    \end{equation*}
\end{lemma}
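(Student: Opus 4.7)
The plan is to verify the two inclusions separately, with the first being essentially a tautology and the second requiring a short triangle-inequality argument after choosing the right point of $A$.

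For the first inclusion $\aCr^\beta_{r,k}(A)\subseteq \Cr^\beta_{r,k}(A)$, I would simply observe that by definition $W^\beta_{r,k}(a)=B^\beta_{r,k}(a)\cap \Vor_A(a)\subseteq B^\beta_{r,k}(a)$ for every $a\in A$, and then take the union over $a\in A$.

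For the second inclusion $\Cr^\beta_{r,k}(A)\subseteq\aCr^\beta_{(2\beta+1)r,k}(A)$, I would pick $x\in\Cr^\beta_{r,k}(A)$, so there is some $a\in A$ with $\beta\Core^A_k(a)\leq r$ and $d(a,x)\leq r$. The idea is to transfer the witness from $a$ to a point $a'\in A$ whose Voronoi cell contains $x$. Choose such an $a'$, which exists because the Voronoi cells cover $\mathbb{R}^n$. By the defining property of $\Vor_A(a')$, we have $d(a',x)\leq d(a,x)\leq r$, so by the triangle inequality $d(a',a)\leq 2r$. Picking $\lceil k\rceil$ points $a_1,\dots,a_{\lceil k\rceil}\in A$ realising $\Core^A_k(a)\leq r/\beta$, the triangle inequality gives
\begin{equation*}
d(a',a_i)\leq d(a',a)+d(a,a_i)\leq 2r+r/\beta,
\end{equation*}
so $\beta\Core^A_k(a')\leq (2\beta+1)r$. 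Combined with $d(a',x)\leq r\leq (2\beta+1)r$, this yields $\Lambda^\beta_k(a',x)\leq (2\beta+1)r$, hence $x\in W^\beta_{(2\beta+1)r,k}(a')\subseteq \aCr^\beta_{(2\beta+1)r,k}(A)$.

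There is no real obstacle here: the only subtlety is the need to switch the ``witness'' from the original $a$ (given by membership in $\Cr^\beta_{r,k}(A)$) to the $a'$ whose Voronoi cell contains $x$, and to track the factor of $2$ picked up in $d(a',a)$ and the factor of $\beta^{-1}$ already present from the core-distance bound; summing these after multiplying by $\beta$ produces exactly $(2\beta+1)r$. This factor is tight in the sense that both the distance part and the core-distance part of $\Lambda^\beta_k(a',x)$ must be controlled simultaneously.
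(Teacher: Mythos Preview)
Your proof is correct and essentially identical to the paper's: both establish the first inclusion from $W^\beta_{r,k}(a)\subseteq B^\beta_{r,k}(a)$, and for the second inclusion both switch the witness from the original point to the point whose Voronoi cell contains $x$, then bound the core distance at the new point via the triangle inequality to obtain the factor $(2\beta+1)$. The only cosmetic difference is that the paper applies the triangle inequality in one step as $d(a',a_i)\leq d(a',x)+d(x,a)+d(a,a_i)$, whereas you first bound $d(a',a)\leq 2r$ and then proceed; the content is the same.
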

\begin{proof}
It follows directly from $W^\beta_{r,k}(a)\subseteq B^\beta_{r,k}(a)$ that $\aCr^\beta_{r,k}(A)\subseteq\Cr^\beta_{r,k}(A)$. 
If $x\in\Cr^\beta_{r,k}(A)$, then there exists $a'\in A$ such that $\Core^A_k(a') \leq r/\beta$ and $d(a',x) \leq r$. Let $a\in A$ be such that $x\in \Vor_A(a)$. 
Choose $\lceil k \rceil$ points $a_1, \dots, a_{\lceil k \rceil}$ in $A$ with $d(a', a_i) \leq r/\beta$ for $i = 1, \dots, \lceil k\rceil$.
Since $d(a,x)\leq d(a',x) \leq r$, the triangle inequality implies that
\begin{equation*}
    d(a,a_i)\leq d(a,x)+d(x,a')+d(a',a_i) \leq r+r+r/\beta = \beta^{-1}(2\beta+1)r,
\end{equation*}
and $\beta\Core^A_k(a) \leq (2\beta+1)r$ (see \cref{fig:alpha_core_interleaving}). In particular, the point $x$ is in $B^\beta_{(2\beta+1)r,k}(a)$ and thus also in $W^\beta_{(2\beta+1)r,k}(a)$. We get that $\Cr^\beta_{r,k}(A)\subseteq\aCr^\beta_{(2\beta+1)r,k}(A)$.
\end{proof}

\begin{figure}[h]
    \centering
    \includegraphics[width=0.3\textwidth]{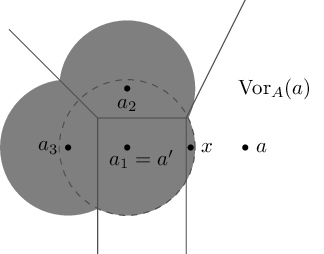}
    \caption{A diagram illustrating the proof of the second inclusion in \cref{alphatheorem} when $A\subseteq\mathbb{R}^2$, $\beta=1$ and $k=3$. Observe that the distance $d(a,a_3)$ can get arbitrarily close to $3r$ meaning that the multiplicative interleaving factor of $2\beta+1$ is the best we can hope for.}
    \label{fig:alpha_core_interleaving}
\end{figure}

Combining theorems \ref{corecovtheorem} and \ref{alphatheorem} gives us an interleaving between the multicover and Delaunay core bifiltrations. By considering the alpha-version of the $\Gamma_k$-balls ($\Gamma_k$ defined in \eqref{def:gamma}) we can make this interleaving even stricter. 
\begin{lemma}\label{lem:alphacoverlemma}
    Let $A\subseteq\mathbb{R}^n$ be a finite Euclidean subspace. The multicover bifiltration $\operatorname{Cov}(A)$ can be written as a union of $\Gamma_k$-Voronoi balls, i.e.,
    \begin{equation*}
        \operatorname{Cov}_{r,k}(A)=\bigcup_{a\in A}\left(B_{\Gamma_k}(a,r)\cap \operatorname{Vor}_A(a)\right).
    \end{equation*} 
\end{lemma}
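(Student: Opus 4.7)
The plan is to prove the two set inclusions separately, using \cref{covlemma} as the starting point. The direction $\bigcup_{a\in A}(B_{\Gamma_k}(a,r)\cap\operatorname{Vor}_A(a))\subseteq\operatorname{Cov}_{r,k}(A)$ is essentially automatic, since each Voronoi ball is contained in $B_{\Gamma_k}(a,r)$, and taking the union over $a\in A$ yields a subset of $\bigcup_{a\in A}B_{\Gamma_k}(a,r)$, which equals $\operatorname{Cov}_{r,k}(A)$ by \cref{covlemma}.

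For the reverse inclusion, I would take an arbitrary $x\in\operatorname{Cov}_{r,k}(A)$ and produce a single $a\in A$ that witnesses membership in $B_{\Gamma_k}(a,r)\cap\operatorname{Vor}_A(a)$. The natural choice is to use the Voronoi decomposition of $\mathbb{R}^n$ to pick some $a\in A$ with $x\in\operatorname{Vor}_A(a)$. The key observation is that this $a$ is automatically close enough to $x$: since $x\in\operatorname{Cov}_{r,k}(A)$, there are at least $\lceil k\rceil$ points $a_1,\ldots,a_{\lceil k\rceil}\in A$ within distance $r$ of $x$, which in particular gives $\operatorname{Core}^A_k(x)\leq r$ and guarantees the existence of at least one point in $A$ within distance $r$ of $x$. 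The Voronoi property then forces $d(a,x)\leq d(a_1,x)\leq r$, so both terms in $\Gamma_k(a,x)=\max\{\operatorname{Core}^A_k(x),d(a,x)\}$ are bounded by $r$, giving $x\in B_{\Gamma_k}(a,r)$.

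I do not anticipate a genuine obstacle here; the argument is a clean combination of the defining property of Voronoi cells with the dual characterization of the core distance already used in the proof of \cref{covlemma}. The only mild subtlety is noting that the witness $a$ produced by the Voronoi decomposition need not be any of the $a_i$ realizing the $k$-core distance at $x$, but the triangle-inequality-free bound $d(a,x)\leq d(a_i,x)$ coming directly from $x\in\operatorname{Vor}_A(a)$ makes this irrelevant.
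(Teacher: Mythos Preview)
Your proposal is correct and follows essentially the same approach as the paper's proof: both directions are handled identically, with the forward inclusion coming from $B_{\Gamma_k}(a,r)\cap\operatorname{Vor}_A(a)\subseteq B_{\Gamma_k}(a,r)$ together with \cref{covlemma}, and the reverse inclusion obtained by choosing $a$ so that $x\in\operatorname{Vor}_A(a)$ and using the Voronoi property to bound $d(a,x)$ by the distance to any of the witnesses for $x\in\operatorname{Cov}_{r,k}(A)$. The paper phrases the reverse direction slightly more tersely (invoking \cref{covlemma} directly rather than unpacking the $\lceil k\rceil$ witnesses), but the argument is the same.
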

\begin{proof}
    The $\Gamma_k$-Voronoi balls are contained in the $\Gamma_k$-balls, so their union is contained in $\operatorname{Cov}_{r,k}(A)$. Conversely, if $x\in \operatorname{Cov}_{r,k}(A)$, then, by \cref{covlemma}, $\Core^A_k(x) \leq r$ and $d(a',x) \leq r$ for some $a'\in A$. Now, if $x\in\operatorname{Vor}_A(a)$, then $d(a,x)\leq d(a',x) \leq r$ and so $x\in B_{\Gamma_k}(a,r)\cap \operatorname{Vor}_A(a)$.
\end{proof}

Despite our Delaunay construction not being homotopy equivalent to the core bifiltration, but only interleaved, the interleaving to the multicover bifiltration is preserved.

\begin{theorem}\label{alphacoremulticov}
        Let $A\subseteq\mathbb{R}^n$ be a finite Euclidean subspace and $\beta>0$. The Delaunay core bifiltration $\aCr^\beta(A)$ and the multicover bifiltration $\Cov(A)$ are interleaved as follows:
        \begin{equation*}
            \textit{i)}\, \aCr^\beta_{r,k}(A) \subseteq\Cov_{(1+\beta^{-1})r,k}(A), \quad \textit{ii)}\,\Cov_{r,k}(A) \subseteq\aCr^\beta_{\max\{1,2\beta\}r,k}(A).
        \end{equation*}
\end{theorem}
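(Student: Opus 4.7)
The plan is to prove the two inclusions separately, with inclusion \textit{i)} being essentially immediate from previously established results, and inclusion \textit{ii)} requiring a careful use of \cref{lem:alphacoverlemma} to pin down the Voronoi cell we land in.

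For \textit{i)}, I would simply compose two inclusions we already have at our disposal. The easy half of \cref{alphatheorem} gives $\aCr^\beta_{r,k}(A)\subseteq\Cr^\beta_{r,k}(A)$, and part \textit{i)} of \cref{corecovtheorem} gives $\Cr^\beta_{r,k}(A)\subseteq\Cov_{(1+\beta^{-1})r,k}(A)$. Chaining these yields the desired inclusion with no further work. Note that this is strictly better than the bound one would obtain by chaining the \textit{full} interleaving of \cref{alphatheorem} with \cref{corecovtheorem}, which is exactly the point of proving \textit{ii)} by a separate direct argument rather than via the core bifiltration.

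For \textit{ii)}, the strategy is to mimic the proof of part \textit{ii)} of \cref{corecovtheorem}, but use \cref{lem:alphacoverlemma} in place of \cref{covlemma} so that the witnessing point $a\in A$ can be chosen in the Voronoi cell containing $x$. Concretely, let $x\in\Cov_{r,k}(A)$. By \cref{lem:alphacoverlemma} there exists $a\in A$ with $x\in\operatorname{Vor}_A(a)$ and $x\in B_{\Gamma_k}(a,r)$, giving in particular $d(a,x)\leq r$ and $\Core^A_k(x)\leq r$. The latter produces at least $\lceil k\rceil$ points $a_1,\dots,a_{\lceil k\rceil}\in A$ with $d(a_i,x)\leq r$, and the triangle inequality then yields $d(a,a_i)\leq 2r$, so that $\beta\Core^A_k(a)\leq 2\beta r$. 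Combined with $d(a,x)\leq r$, we obtain $\Lambda^\beta_k(a,x)=\max\{\beta\Core^A_k(a),d(a,x)\}\leq\max\{1,2\beta\}r$, i.e., $x\in B^\beta_{\max\{1,2\beta\}r,k}(a)$. Since additionally $x\in\operatorname{Vor}_A(a)$, we conclude $x\in W^\beta_{\max\{1,2\beta\}r,k}(a)\subseteq\aCr^\beta_{\max\{1,2\beta\}r,k}(A)$.

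There is no real obstacle here; the work of identifying the correct Voronoi witness has already been done in \cref{lem:alphacoverlemma}, and that is exactly what lets us avoid any extra interleaving factor coming from \cref{alphatheorem}. The only thing to double-check is that the same constants $1+\beta^{-1}$ and $\max\{1,2\beta\}$ that appear in \cref{corecovtheorem} indeed suffice, i.e., that our chosen $a$ (the Voronoi center of $x$) satisfies both the metric bound $d(a,x)\leq r$ and the core-distance bound simultaneously, which is precisely what \cref{lem:alphacoverlemma} guarantees.
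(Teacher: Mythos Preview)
Your proposal is correct and follows essentially the same approach as the paper: part \textit{i)} chains the trivial inclusion $\aCr^\beta_{r,k}(A)\subseteq\Cr^\beta_{r,k}(A)$ with \cref{corecovtheorem}\textit{i)}, and part \textit{ii)} uses \cref{lem:alphacoverlemma} to select a Voronoi-compatible witness $a$, then runs the same triangle-inequality estimate as in \cref{corecovtheorem}\textit{ii)}. Your write-up is in fact slightly more explicit than the paper's about why the Voronoi condition comes for free.
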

\begin{proof}
    $i)$ Using Theorem \ref{corecovtheorem}, we get the inclusion $\aCr^\beta_{r,k}(A)\subseteq\Cr^\beta_{r,k}(A)\subseteq\Cov_{(1+\beta^{-1})r,k}(A)$. 

    $ii)$ Let $x\in\Cov_{r,k}(A)$. By \cref{lem:alphacoverlemma}, this means that $\Core^A_k(x)\leq r$, so we can pick $a_1, \dots, a_{\lceil k\rceil}$ in $A$ with
    $d(a_i,x) \leq r$ for all $i=1,\dots, \lceil k\rceil$, and that there exists an $a\in A$ such that $x\in \Vor_A(a)$ and $d(a,x) \leq r$. In particular, we have $d(a,a_i)\leq d(a,x)+d(a_i,x) \leq 2r$, and so $\beta\Core^A_k(a) \leq 2\beta r$. Since $d(a,x) \leq r$ we get $x\in B^\beta_{\max\{1,2\beta\}r,k}(a)$, and since $x\in \Vor_A(a)$ we get, that $x$ is in $\aCr^\beta_{\max\{1,2\beta\}r,k}(A)$.
\end{proof}

In this section, we have established interleaving results for the Delaunay core bifiltration, enabling us to further derive Delaunay core stability as a corollary of multicover stability in the subsequent section.

\section{Stability for Core Bifiltration}\label{sec:stability}
\cite{blumberg2022stability} give a stability result for the multicover bifiltration with respect to the \emph{Prohorov distance}. In this section, we establish similar results for the core- and Delaunay core bifiltrations. Recall that, for a finite metric subset $S\subseteq (M,d)$ and $\delta\geq 0$, the $\delta$-thickening $S^\delta$ of $S$ is the set
\begin{equation*}
    S^\delta = \{x\in M\,|\,\textrm{there exists } s\in S \textrm{ such that } d(s,x) \leq \delta \}=\bigcup_{s\in S}B_d(s,\delta).
\end{equation*}
Now, for finite metric subspaces $A,B\subseteq M$, the \textbf{(counting) Prohorov distance} between them is
\begin{equation*}
    d_P(A,B) = \sup_{S\subseteq M\,\text{closed}} \inf \left\{\delta\geq 0\,\middle|\, {|S\cap A|} \leq {|S^\delta\cap B|} + \delta \textrm{ and } {|S\cap B|} \leq {|S^\delta\cap A|} + \delta\right\}.
\end{equation*}

By the triangle inequality, the $\delta$-thickening of a metric ball of radius $r$ is included in the metric ball of radius $r+\delta$, i.e.,\ $B_d(x,r)^\delta\subseteq B_d(x,r+\delta)$ for all $x\in M$.

\begin{theorem}[Multicover Stability {\cite[Rmk 3.2 for Thm 1.6]{blumberg2022stability}}]\label{multicoverstability}
    Let $A,B\subseteq (M,d)$ be finite metric subspaces and let $\delta > d_P(A,B)$. For all $r > 0$ and all $k>\delta$ we have inclusions
    \begin{equation*}
        \Cov_{r,k}(A)\subseteq\Cov_{r+\delta,k-\delta}(B)\quad\textrm{and}\quad \Cov_{r,k}(B)\subseteq\Cov_{r+\delta,k-\delta}(A).
    \end{equation*}
\end{theorem}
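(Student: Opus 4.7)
The plan is to unfold the definitions and apply the defining inequality of the Prohorov distance to a single, well-chosen closed test set, letting the triangle inequality do the geometric work. By symmetry of the claim in $A$ and $B$, it suffices to establish the first inclusion.

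First I would translate membership in the multicover bifiltration into a pure counting statement: $x\in\Cov_{r,k}(A)$ is equivalent to $|B_d(x,r)\cap A|\geq\lceil k\rceil$. So, given $x\in\Cov_{r,k}(A)$, it suffices to exhibit at least $\lceil k-\delta\rceil$ points of $B$ inside $B_d(x,r+\delta)$ in order to conclude $x\in\Cov_{r+\delta,k-\delta}(B)$.

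The key step is to apply the defining inequality of $d_P$ to the closed set $S:=B_d(x,r)\subseteq M$. Since $\delta>d_P(A,B)=\sup_{S'}\inf\{\delta'\geq 0\mid\ldots\}$, the infimum corresponding to our particular $S$ is strictly less than $\delta$. A short monotonicity check, namely that if the inequality holds for some $\delta''\leq\delta$ then $S^{\delta''}\subseteq S^\delta$ forces it to hold at $\delta$, lets me use $\delta$ itself in the Prohorov inequality, yielding
$$|S\cap A|\leq|S^\delta\cap B|+\delta.$$
The triangle inequality noted in the excerpt, $B_d(x,r)^\delta\subseteq B_d(x,r+\delta)$, upgrades this to
$$|B_d(x,r)\cap A|\leq|B_d(x,r+\delta)\cap B|+\delta.$$
Combining with $|B_d(x,r)\cap A|\geq\lceil k\rceil\geq k$ gives $|B_d(x,r+\delta)\cap B|\geq k-\delta$; since the left side is a nonnegative integer and the hypothesis $k>\delta$ ensures $k-\delta>0$, this rounds up to $\lceil k-\delta\rceil$, which is exactly what is needed.

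The only real obstacle is a piece of bookkeeping: justifying that the Prohorov inequality can be applied at the specific value $\delta$ rather than only at arbitrary $\delta'>d_P(A,B)$, and keeping the integer/real distinction between $k$ and $\lceil k\rceil$ straight throughout. Both points are routine but easy to slip on, and together they account for why the theorem is phrased with a strict inequality $\delta>d_P(A,B)$ and the lower bound $k>\delta$ on the density parameter.
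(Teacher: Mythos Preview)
Your proposal is correct and follows essentially the same approach as the paper's proof: both apply the Prohorov inequality to the closed test set $S=B_d(x,r)$, use the containment $B_d(x,r)^\delta\subseteq B_d(x,r+\delta)$, and conclude $k-\delta\leq|B_d(x,r+\delta)\cap B|$. Your write-up is in fact a bit more careful than the paper's, which silently absorbs the monotonicity step you flag (passing from $\delta>d_P(A,B)$ to the inequality at $\delta$ itself) into the phrase ``using our assumption.''
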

\begin{proof}
    We show the first inclusion, the second is symmetric. Let $x\in \Cov_{r,k}(A)$. This is true if and only if $k\leq |B_d(x,r)\cap A|$. Using our assumption and the fact that $B_d(x,r)^\delta \subseteq B_d(x,r+\delta)$ we have
    \begin{equation*}
        k\leq |B_d(x,r)\cap A|\leq |B_d(x,r)^\delta\cap B|+\delta\leq|B_d(x,r+\delta)\cap B|+\delta.
    \end{equation*}
    In particular, we get that $k-\delta\leq |B_d(x,r+\delta)\cap B|$, so $x\in\Cov_{r+\delta,k-\delta}(B)$.
\end{proof}

Using a similar approach, we get stability for the core bifiltration:

\begin{theorem}[Core Stability]
    Let $A,B\subseteq (M,d)$ be finite metric subspaces, fix $\beta>0$ and let $\delta > d_P(A,B)$. For all $r > 0$ and all $k>\delta$ we have inclusions
    \begin{equation*}
        \Cr^\beta_{r,k}(A)\subseteq\Cr^\beta_{r',k-\delta}(B)\quad\textrm{and}\quad \Cr^\beta_{r,k}(B)\subseteq\Cr^\beta_{r',k-\delta}(A),
    \end{equation*}
    where $r'=\max\{2(r+\beta\delta),(1+\beta^{-1})r+\delta\}$.
\end{theorem}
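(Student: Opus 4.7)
The plan is to prove the first inclusion directly (the second follows by swapping the roles of $A$ and $B$, using that $d_P$ is symmetric), mimicking the strategy used for the multicover case in \cref{multicoverstability} but with one extra step: we must produce a good \emph{witness point} in $B$ that plays the role of a core-distance witness from $A$.

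Let $x\in \Cr^\beta_{r,k}(A)$. By definition there is some $a\in A$ with $d(a,x)\leq r$ and $\Core^A_k(a)\leq r/\beta$, so the closed set $S:=B_d(a,r/\beta)$ satisfies $|S\cap A|\geq k$. Since $\delta>d_P(A,B)$, the Prohorov condition applied to $S$ yields $|S^\delta\cap B|\geq |S\cap A|-\delta\geq k-\delta$, and the triangle inequality gives $S^\delta\subseteq B_d(a,r/\beta+\delta)$. Because $k>\delta$, the set $B_d(a,r/\beta+\delta)\cap B$ is nonempty, so we may choose any point $b$ in it; this $b$ will be the witness in $B$ for showing $x\in \Cr^\beta_{r',k-\delta}(B)$.

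We then verify the two defining conditions for $x\in B^\beta_{r',k-\delta}(b)$, and these produce exactly the two expressions in the maximum defining $r'$. First, the distance bound:
\begin{equation*}
d(b,x)\leq d(b,a)+d(a,x)\leq (r/\beta+\delta)+r=(1+\beta^{-1})r+\delta,
\end{equation*}
which is $\leq r'$ by the second term of the max. Second, the core-distance bound: for every $b'\in B_d(a,r/\beta+\delta)\cap B$ we have $d(b,b')\leq d(b,a)+d(a,b')\leq 2(r/\beta+\delta)$, so
\begin{equation*}
\bigl|B_d\bigl(b,\,2(r/\beta+\delta)\bigr)\cap B\bigr|\geq |B_d(a,r/\beta+\delta)\cap B|\geq k-\delta,
\end{equation*}
hence $\beta\Core^B_{k-\delta}(b)\leq 2r+2\beta\delta=2(r+\beta\delta)$, which is $\leq r'$ by the first term of the max. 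Combining, $\Lambda^\beta_{k-\delta}(b,x)\leq r'$, i.e.\ $x\in \Cr^\beta_{r',k-\delta}(B)$.

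There is no real obstacle; the only subtle point is recognizing that the two terms in $\max\{2(r+\beta\delta),(1+\beta^{-1})r+\delta\}$ arise from two genuinely different inequalities (one controlling the metric radius to $x$, the other the core radius around the new witness $b$), so neither can be dropped. I would also remark, after the proof, that naively chaining \cref{corecovtheorem} with \cref{multicoverstability} would give the weaker bound $\max\{1,2\beta\}((1+\beta^{-1})r+\delta)$, which this direct argument improves.
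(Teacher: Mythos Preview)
Your proof is correct and follows essentially the same approach as the paper: pick a witness $a\in A$ for $x$, apply the Prohorov inequality to the closed ball $B_d(a,r/\beta)$ to find at least $k-\delta$ points of $B$ in $B_d(a,r/\beta+\delta)$, select any such $b$, and bound $d(b,x)$ and $\beta\Core^B_{k-\delta}(b)$ separately via the triangle inequality. Your concluding remark comparing the direct bound to the one obtained by chaining \cref{corecovtheorem} with \cref{multicoverstability} is a worthwhile addition not made explicit in the paper.
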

\begin{proof}
    Let $x\in \Cr^\beta_{r,k}(A)$. This is true if and only if there exists an $a\in A$ such that $d(a,x) \leq r$ and $k\leq |B_d(a,r/\beta)\cap A|$. If $a\in A$ is such a point, then 
    \begin{equation}\label{eq:stabilityargument}
        k\leq |B_d(a,r/\beta)\cap A|\leq |B_d(a,r/\beta)^\delta\cap B|+\delta\leq|B_d(a,r/\beta+\delta)\cap B|+\delta.
    \end{equation}
    Therefore, we have $0< k-\delta \leq |B_d(a,r/\beta+\delta)\cap B|$, so let $b$ be an element in the intersection $B_d(a,r/\beta+\delta)\cap B$. By the triangle inequality, we have $B_d(a,r/\beta+\delta)\subseteq B_d(b,2r/\beta+2\delta)$ and consequently, $k-\delta\leq |B_d(b,2r/\beta+2\delta)\cap B|$ which is to say that $\beta\Core^B_{k-\delta}(b)\leq2(r+\beta\delta)$. Furthermore, we have that $d(b,x)\leq d(b,a)+d(a,x) \leq r/\beta+\delta+r=(1+\beta^{-1})r+\delta$. The other direction follows from the symmetry of the Prohorov distance.
\end{proof}

Combining theorems \ref{multicoverstability} and \ref{alphacoremulticov}, we get the following stability result for the Delaunay core bifiltration.
\begin{corollary}[Delaunay Core Stability]\label{cor:alphacorestability}
    Let $A,B\subseteq\mathbb{R}^n$ be finite Euclidean subspaces, fix $\beta>0$ and let $\delta > d_P(A,B)$. For all $r > 0$ and $k>\delta$, we have inclusions 
    \begin{equation*}
        \aCr^\beta_{r,k}(A)\subseteq\aCr^\beta_{r',k-\delta}(B)\quad\textrm{and}\quad \aCr^\beta_{r,k}(B)\subseteq\aCr^\beta_{r',k-\delta}(A).
    \end{equation*} 
    where $r'=\max\{1,2\beta\}((1+\beta^{-1})r+\delta)$.
    \qed
\end{corollary}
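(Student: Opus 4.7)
The plan is to chain together the interleaving results already established, using the multicover bifiltration as an intermediate step. Since the statement cites \cref{multicoverstability} and \cref{alphacoremulticov}, the proof is essentially a three-step composition of inclusions, and by symmetry of the Prohorov distance it suffices to verify one of the two inclusions.

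First, I would start with a point $x\in\aCr^\beta_{r,k}(A)$ and apply part (i) of \cref{alphacoremulticov} to land in the multicover bifiltration: $x\in\Cov_{(1+\beta^{-1})r,k}(A)$. Next, I would use the Prohorov hypothesis $\delta > d_P(A,B)$ together with $k>\delta$ (which is needed so that $k-\delta>0$ and \cref{multicoverstability} applies) to move from $A$ to $B$, obtaining $x\in\Cov_{(1+\beta^{-1})r+\delta,\,k-\delta}(B)$. Finally, I would apply part (ii) of \cref{alphacoremulticov}, with radius parameter $(1+\beta^{-1})r+\delta$ and density parameter $k-\delta$, to land back in the Delaunay core bifiltration at
\begin{equation*}
  \aCr^\beta_{\max\{1,2\beta\}((1+\beta^{-1})r+\delta),\,k-\delta}(B),
\end{equation*}
which matches the claimed $r'$. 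The opposite inclusion follows by swapping the roles of $A$ and $B$ in exactly the same argument, using that $d_P$ is symmetric.

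There is no real obstacle here; the only thing to double-check is that the density shift behaves correctly under composition. Both \cref{alphacoremulticov} and the Prohorov-stability step preserve the density parameter $k$ (only the multicover stability step decreases it by $\delta$), so the final density is $k-\delta$ as required, and the side condition $k>\delta$ from \cref{multicoverstability} carries over verbatim. The radius factor likewise composes cleanly: the first Delaunay-to-multicover step inflates $r$ to $(1+\beta^{-1})r$, the Prohorov step adds $\delta$, and the multicover-to-Delaunay step multiplies by $\max\{1,2\beta\}$, yielding exactly $r'=\max\{1,2\beta\}((1+\beta^{-1})r+\delta)$. Because the statement is a direct corollary, I would keep the write-up short — essentially one displayed chain of three inclusions — and end with a \qed.
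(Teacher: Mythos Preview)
Your proposal is correct and matches the paper's approach exactly: the corollary is stated with a \qed\ and no written proof, preceded only by the sentence ``Combining theorems \ref{multicoverstability} and \ref{alphacoremulticov}, we get the following stability result for the Delaunay core bifiltration,'' which is precisely the three-step chain you describe.
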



\cite[Def.\ 2.13]{blumberg2022stability} also consider a \emph{normalized} version of the multicover bifiltration, which we will denote $\overline{\Cov}(A)$ given in filtration degree $(r,s)$ by $\overline{\Cov}_{r,s}(A)=\Cov_{r,s|A|}(A)$. In practice, we are often in a situation where $|A|\neq|B|$. It then makes more sense to consider the normalized version, making the second parameter comparable. Similarly, we can define normalized versions of the core- and Delaunay core bifiltrations by letting
\begin{equation*}
    \overline{\Cr}^\beta_{r,s}(A)=\Cr^\beta_{r,s|A|}(A)\quad\text{and}\quad
    \overline{\aCr}^\beta_{r,s}(A)=\aCr^\beta_{r,s|A|}(A).
\end{equation*}

The \textbf{normalized Prohorov distance} between finite metric subspaces $A,B\subseteq M$ is defined as
\begin{equation*}
    \resizebox{.95\hsize}{!}{$d^N_P(A,B) = \underset{S\subseteq M\,\text{closed}}{\sup} \inf \left\{\delta\geq 0\,\middle|\, \frac{|S\cap A|}{|A|} \leq \frac{|S^\delta\cap B|}{|B|} + \delta \textrm{ and } \frac{|S\cap B|}{|B|} \leq \frac{|S^\delta\cap A|}{|A|} + \delta\right\}$}.
\end{equation*}
As in the unnormalized case, the stability for normalized multicover bifiltration in \cite[Thm.\ 1.6]{blumberg2022stability} also extends to a stability for the normalized core- and Delaunay core bifiltrations.

\begin{theorem}[Normalized Stability]\label{cor:normalizedstability}
    Let $A,B\subseteq\mathbb{R}^n$ be finite Euclidean subspaces, $\beta>0$ and $\delta > d^N_P(A,B)$. For all $r > 0$ and $s>\delta$, we have inclusions 
    \begin{align*}
        \overline{\Cov}_{r,s}(A)&\subseteq\overline{\Cov}_{r+\delta,s-\delta}(B), &\quad\overline{\Cov}_{r,s}(B)&\subseteq\overline{\Cov}_{r+\delta,s-\delta}(A); \\
        \overline{\Cr}^\beta_{r,s}(A)&\subseteq\overline{\Cr}^\beta_{r',s-\delta}(B), &\quad\overline{\Cr}^\beta_{r,s}(B)&\subseteq \overline{\Cr}^\beta_{r',s-\delta}(A); \\
        \overline{\aCr}^\beta_{r,s}(A)&\subseteq \overline{\aCr}^\beta_{r'',s-\delta}(B), &\quad \overline{\aCr}^\beta_{r,s}(B)&\subseteq \overline{\aCr}^\beta_{r'',s-\delta}(A),
    \end{align*}
    where $r'=\max\{2(r+\beta\delta),(1+\beta^{-1})r+\delta\}$ and $r''=\max\{1,2\beta\}((1+\beta^{-1})r+\delta)$.
\end{theorem}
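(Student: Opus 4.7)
The plan is to reprove each of the three families of inclusions by adapting the argument used in the unnormalized setting, the only difference being that instead of the counting inequality $|S\cap A|\leq |S^\delta\cap B|+\delta$ we invoke the normalized version $|S\cap A|/|A|\leq |S^\delta\cap B|/|B|+\delta$ furnished by $\delta>d^N_P(A,B)$. Throughout, I would use the unfolding $\overline{\Cov}_{r,s}(A)=\Cov_{r,s|A|}(A)$ and the analogous unfoldings for $\overline{\Cr}^\beta$ and $\overline{\aCr}^\beta$.

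For the first line, I would mimic the proof of \cref{multicoverstability}. Take $x\in\overline{\Cov}_{r,s}(A)$, so that $s\leq |B_d(x,r)\cap A|/|A|$. Applying the normalized Prohorov inequality to the closed ball $S=B_d(x,r)$ and using $B_d(x,r)^\delta\subseteq B_d(x,r+\delta)$ yields
\begin{equation*}
    s\leq \frac{|B_d(x,r)\cap A|}{|A|}\leq \frac{|B_d(x,r)^\delta\cap B|}{|B|}+\delta \leq \frac{|B_d(x,r+\delta)\cap B|}{|B|}+\delta,
\end{equation*}
whence $(s-\delta)|B|\leq |B_d(x,r+\delta)\cap B|$, i.e., $x\in\overline{\Cov}_{r+\delta,s-\delta}(B)$. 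The reverse inclusion is symmetric.

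For the core inclusions I would follow the proof of Core Stability: if $x\in\overline{\Cr}^\beta_{r,s}(A)$ then some $a\in A$ satisfies $d(a,x)\leq r$ and $s|A|\leq |B_d(a,r/\beta)\cap A|$. Applying the normalized Prohorov inequality with $S=B_d(a,r/\beta)$ in place of the counting one in \eqref{eq:stabilityargument} gives $(s-\delta)|B|\leq |B_d(a,r/\beta+\delta)\cap B|$, so there exists $b\in B$ within distance $r/\beta+\delta$ of $a$, and $B_d(a,r/\beta+\delta)\subseteq B_d(b,2r/\beta+2\delta)$ yields $\beta\Core^B_{(s-\delta)|B|}(b)\leq 2(r+\beta\delta)$. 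Combined with $d(b,x)\leq(1+\beta^{-1})r+\delta$, this places $x$ in $\overline{\Cr}^\beta_{r',s-\delta}(B)$ for the stated $r'$. For the Delaunay core inclusions, the cleanest route is to \emph{not} reprove them from scratch but rather to imitate the derivation of \cref{cor:alphacorestability}: the Delaunay core inclusions follow by sandwiching
\begin{equation*}
    \overline{\aCr}^\beta_{r,s}(A)\subseteq \overline{\Cov}_{(1+\beta^{-1})r,s}(A)\subseteq \overline{\Cov}_{(1+\beta^{-1})r+\delta,s-\delta}(B)\subseteq \overline{\aCr}^\beta_{\max\{1,2\beta\}((1+\beta^{-1})r+\delta),s-\delta}(B),
\end{equation*}
where the outer inclusions are the normalized analogues of \cref{alphacoremulticov} (which hold because multiplying $k$ by $|A|$ in all filtrations is functorial in the second coordinate) and the middle inclusion is the first line of the present theorem.

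I do not expect any serious obstacle; the only subtlety is bookkeeping the normalization carefully so that the $|A|$ and $|B|$ factors cancel at the right moment, which is why the $s$-parameter is additively shifted by $\delta$ (rather than by $\delta|A|$ or $\delta|B|$) in the conclusion. A minor point worth checking is that \cref{alphacoremulticov} is stated for arbitrary $k>0$, so substituting $k=s|A|$ or $k=s|B|$ into it is legitimate and produces exactly the normalized interleaving used above.
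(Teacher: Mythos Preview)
Your proposal is correct and follows exactly the route the paper intends: the paper does not spell out a proof of \cref{cor:normalizedstability} but simply remarks that the unnormalized arguments extend, and you have carried out precisely that extension---replacing counting inequalities by their normalized counterparts in the proofs of \cref{multicoverstability} and Core Stability, and deriving the Delaunay core case via the sandwich of \cref{alphacoremulticov} just as in \cref{cor:alphacorestability}. The bookkeeping you flag (that substituting $k=s|A|$ into \cref{alphacoremulticov} is legitimate for arbitrary real $k>0$, and that the $\delta$-shift lands on $s$ rather than on $s|A|$) is exactly the point, and you have handled it correctly.
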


\section{Experiments} \label{sec:experiments}

This section is divided into two main parts. In the first, we compute persistent homology along different lines (or slices) following common practice in topological data analysis \cite{rolle2024stable}, and present results on five noisy point cloud samples. In the second, we compute persistent homology for the full Delaunay core bifiltration. Additionally, we compare our results with a sublevel Delaunay-\v{C}ech bifiltration \cite{alonso2024delaunay} and the multicover bifiltration computed via the rhomboid tiling bifiltration \cite{Edelsbrunner2021,Edelsbrunner2023,Corbet2023}.

\subsection{Computing Persistence Along a Line} \label{sec:persistence_on_line}

We have implemented code to compute the persistent homology of the core and Delaunay core bifiltrations along a line in parameter space. The implementation, available at \url{https://github.com/odinhg/core}, includes code demonstrating its use, and code to reproduce the experiments reported in this section. The implementation uses the GUDHI library \cite{gudhi:urm, gudhi:FilteredComplexes,gudhi:AlphaComplex}, which allows for easy computation of the corresponding persistent homology modules and the bottleneck distances between them.

We compute the persistent homology of the core and Delaunay core bifiltration along the line $k=g(r)=-\frac{k_{\text{max}}}{r_{\text{max}}}r+k_{\text{max}}$ where $k_{\text{max}}$ and $r_{\text{max}}$ are positive real numbers. Note that for non-integer values of $k$, we round up to the nearest integer, i.e., we compute persistence along the piece-wise constant function $k=\lceil g(r)\rceil$. We also compute the core and Delaunay core bifiltrations for $k$ fixed ($r_\text{max}=+\infty$). 

In our experiments, we choose $r_{\text{max}}$ to be the diameter of the input point cloud and set the parameter $k_\text{max}=\max\left\{1, \lfloor s_\text{max} \vert X\vert\rfloor\,\right\}$ with $s_\text{max}\in\{0, 0.001, 0.01, 0.1\}$ so that $k_\text{max}$ scales with the number of points in the input point cloud. Similarly, in the fixed-$k$ case, we let $k=\max\left\{1, \lfloor s \vert X\vert\rfloor\right\}$.

We consider five point cloud datasets in our experiments: \textbf{Torus 1}, \textbf{Torus 2}, \textbf{Sphere}, \textbf{Circle} and \textbf{Circles}. See \cref{fig:point_clouds} for a visualization of four of these datasets. We construct our point cloud datasets as follows: For an underlying manifold $M\subseteq\mathbb{R}^d$ (see \cref{table:dataset_manifolds} for an overview of the manifolds used for the different datasets), we first uniformly sample a point cloud of size $n$ from the manifold $M$. We then perturb the points according to a normal distribution having mean $\mu=0$ and standard deviation $\sigma=0.07$ to obtain a perturbed sample $Z$. In the last step, we uniformly sample $m$ points $Y$ from the smallest axis parallel hyperbox containing $Z$, and obtain our point cloud dataset $X=X(M, m, n, \sigma)=Z\cup Y$. We think of $Z$ as a noisy signal and $Y$ as background noise. \Cref{fig:torus_persistence_diagrams} shows persistence diagrams for the Delaunay core bifiltration for various values of $k_\text{max}$ for the \textbf{Torus 2} dataset.

\begin{table}[h]
\centering
\begin{tabular}{rl}
\toprule
\textbf{Dataset name} & \textbf{Manifold $M$} \\ \midrule
\textbf{Torus 1} & The 2-torus embedded in $\mathbb{R}^3$.           \\
\textbf{Torus 2} & The Clifford torus $S^1\times S^1$ in $\mathbb{R}^4$.           \\
\textbf{Sphere}  & The 2-sphere $S^2\subseteq\mathbb{R}^3$.           \\
\textbf{Circle}  & The 1-sphere $S^1\subseteq\mathbb{R}^2$.           \\
\textbf{Circles} & The union of two circles with radii $0.5$ and $1$ in $\mathbb{R}^2$.\\    
\bottomrule
\end{tabular}
\caption{The underlying manifolds used to generate the five datasets.}
\label{table:dataset_manifolds}
\end{table}

As a ground truth for a sample $X=X(M,m,n,\sigma)$, we use the \v{C}ech persistent homology of a uniform sample $X_t=X(M,0,m+n,0)$ from $M$ of size $n+m$ without any perturbation or added noise. In practice, we compute the \v{C}ech persistence using Delaunay core with $k=1$. To compare the Delaunay core persistence of the noisy sample against the ground truth \v{C}ech persistence, we compute the bottleneck distance between their corresponding persistence diagrams. The computed bottleneck distances for both $k=g(r)$ and $k$ fixed are listed in the tables \ref{table:bottleneck_distances_n_10000_m_10000}, \ref{table:bottleneck_distances_n_10000_m_1000} and \ref{table:bottleneck_distances_n_10000_m_100} for the different combinations of $n$ and $m$. In addition, we report runtimes for computing Delaunay core persistence on the \textbf{Torus 1} dataset in \cref{table:runtimes_torus_1} for different choices of $k$ and point cloud sizes. In the aforementioned experiments, we use $\beta=1$. We also perform experiments to examine how the value of $\beta$ affects the bottleneck distance to the ground truth diagram. The plots in \cref{fig:bottleneck_beta_torus_2} and \cref{fig:beta_bottleneck_distance_plots} show the change in bottleneck distance when varying the parameter $\beta\in\{0.125, 0.25, 0.5, 1, 2, 4, 8\}$ while keeping the parameters $n=10000$, $m=1000$, $\sigma=0.07$ and $s_{\text{max}}=0.01$ fixed.

Based on our experiments, we see that choosing $s>0$, corresponding to $k>1$, in most cases gives persistence diagrams closer to the ground truth in terms of the bottleneck distance than for $k=1$ (\v{C}ech persistence). Moreover, in most cases, the persistence along a sloped line yields slightly smaller bottleneck distances than persistence along lines with constant $s$. However, the difference between the bottleneck distances in these two cases is small in all of our experiments. We also note that the optimal choice of $s$ depends both on the homological dimension and on the shape of the point clouds. Based on the plots in \cref{fig:bottleneck_beta_torus_2} and \cref{fig:beta_bottleneck_distance_plots}, the choice of $\beta$ appears to influence the bottleneck distance to the ground truth diagram. But also for $\beta$, the optimal choice seems to depend on the homological dimension and the point cloud shape.

\begin{figure}[h]
\centering

\includegraphics[width=0.8\textwidth]{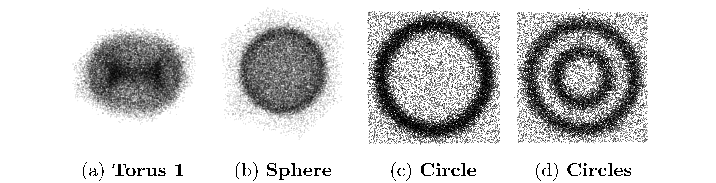}

\caption{Four of the point cloud datasets used in our experiments with $\sigma=0.07$ (standard devitation), $n=20000$ (signal) and $m=10000$ (noise).}
\label{fig:point_clouds}
\end{figure}

\begin{figure}[h]
\centering

\includegraphics[width=\textwidth]{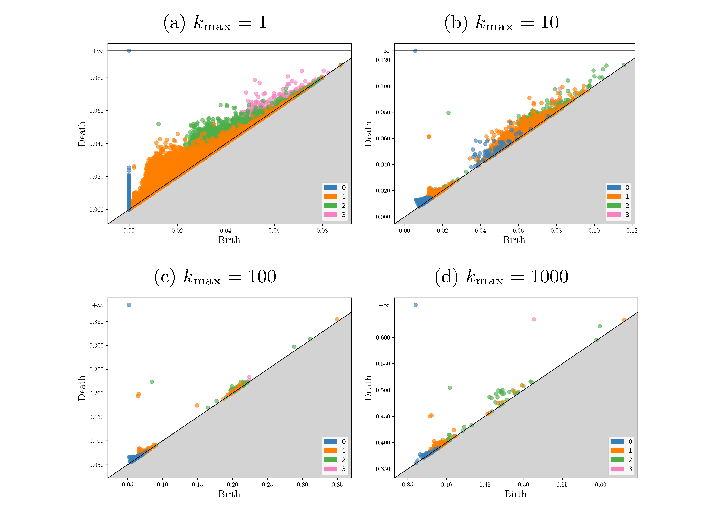}

\caption{Persistence diagrams for the Delaunay core bifiltration ($\beta=1$) of the \textbf{Torus 2} (Clifford torus) dataset with $r_\text{max}=\operatorname{diam}(X)\approx 4.10$. In this example, we have a $1:1$ signal-to-noise ratio with $n=m=10000$, and observe that increasing $k_\text{max}$ strengthens the separation between noise and those persistence pairs we expect to see for a torus. The different colours correspond to the different homological dimensions.}
\label{fig:torus_persistence_diagrams}
\end{figure}

\begin{figure}[h]
\centering
\includegraphics[width=0.4\textwidth]{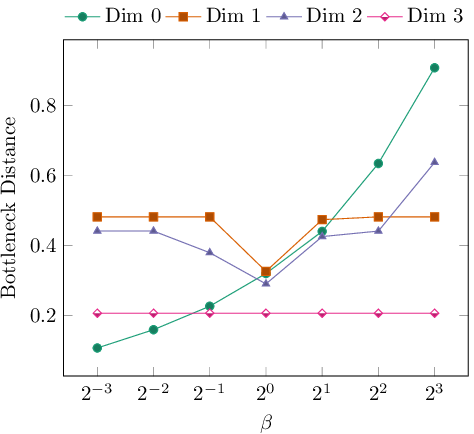}
\caption{Bottleneck distance to the ground truth diagram for the \textbf{Torus 2} dataset when we vary the value of $\beta$. Similar plots for the other datasets can be found in \cref{fig:beta_bottleneck_distance_plots}.}
\label{fig:bottleneck_beta_torus_2}
\end{figure}

\clearpage

\subsection{Computing the Full Delaunay Core Bifiltration} \label{sec:computing_the_full_delaunay_core_bifiltration}

While computing persistence along a line is an efficient way of extracting topological descriptors from noisy point clouds, it is not obvious how one should choose the best line in practical applications. Therefore, one might want to compute the full bifiltration instead. Our implementation of the Delaunay core bifiltration is part of the \texttt{multipers} library \cite{multipers}, an extensive framework for computing multipersistent homology and related invariants. 

The \emph{Multiparameter Module Approximation (MMA)} algorithm introduced in \cite{loiseaux2022fast} is a method to compute approximations of multipersistence modules by gluing together one-dimensional slices. This provides a topological invariant for multiparameter filtrations. We note that the MMA algorithm depends on a choice of diagonal lines to compute one-dimensional persistence along (the fibered barcode), and a matching function for matching pairs of bars corresponding to the same underlying summand. The output of the MMA algorithm is an interval decomposable module, and can thus be visualized by using different colors for the summands. For more details on the MMA algorithm and its properties, see \cite{loiseaux2024multiparameter} and \cite{loiseaux2022fast}. 

Another invariant for persistence modules is the \emph{Hilbert function}. Given a $P$-filtered simplicial complex $K_\bullet$, the Hilbert function $\operatorname{HF}\colon P\to\mathbb{N}$ is defined as $p\mapsto\dim(H_\bullet(K_p))$. Fixing a homological dimension, the Hilbert function of a bifiltration can be visualized as a grayscale heat map. In our experiments, we compute and visualize module approximations with the MMA algorithm, and Hilbert functions, using the \texttt{multipers} library.

To compute the Delaunay core bifiltration for a point cloud $A \subseteq \mathbb{R}^d$, we begin by constructing the alpha complex. Then, for each simplex in the alpha complex, we assign a minimal filtration value for each $k$. For large point clouds, computational complexity can be reduced by specifying a list of $k$ values rather than considering every possible value of $k$. See \cref{alg:delaunay_core} in the appendix for the pseudocode. For all the following experiments, we set $\beta=1$ in the Delaunay core bifiltration.

\subsubsection{Delaunay Core Persistence of Uniform Noise}\label{sec:delaunay_core_persistence_of_noise}

We first analyze a dataset of uniform noise, looking at its MMA and Hilbert function plots. This helps us identify the characteristic shape of the uniform background noise in later experiments. We compute $H_0$ and $H_1$ persistent homology for a point cloud of $1000$ points uniformly sampled from $[-1,1]^2$. This reveals a thin curved artifact in the MMA and Hilbert function plots shown in \cref{fig:uniform_noise_delaunay_core} which also appears in the analysis of the subsequent datasets.

\begin{figure}[H]
\centering
\begin{minipage}{0.30\textwidth}
\centering
\includegraphics[width=\linewidth]{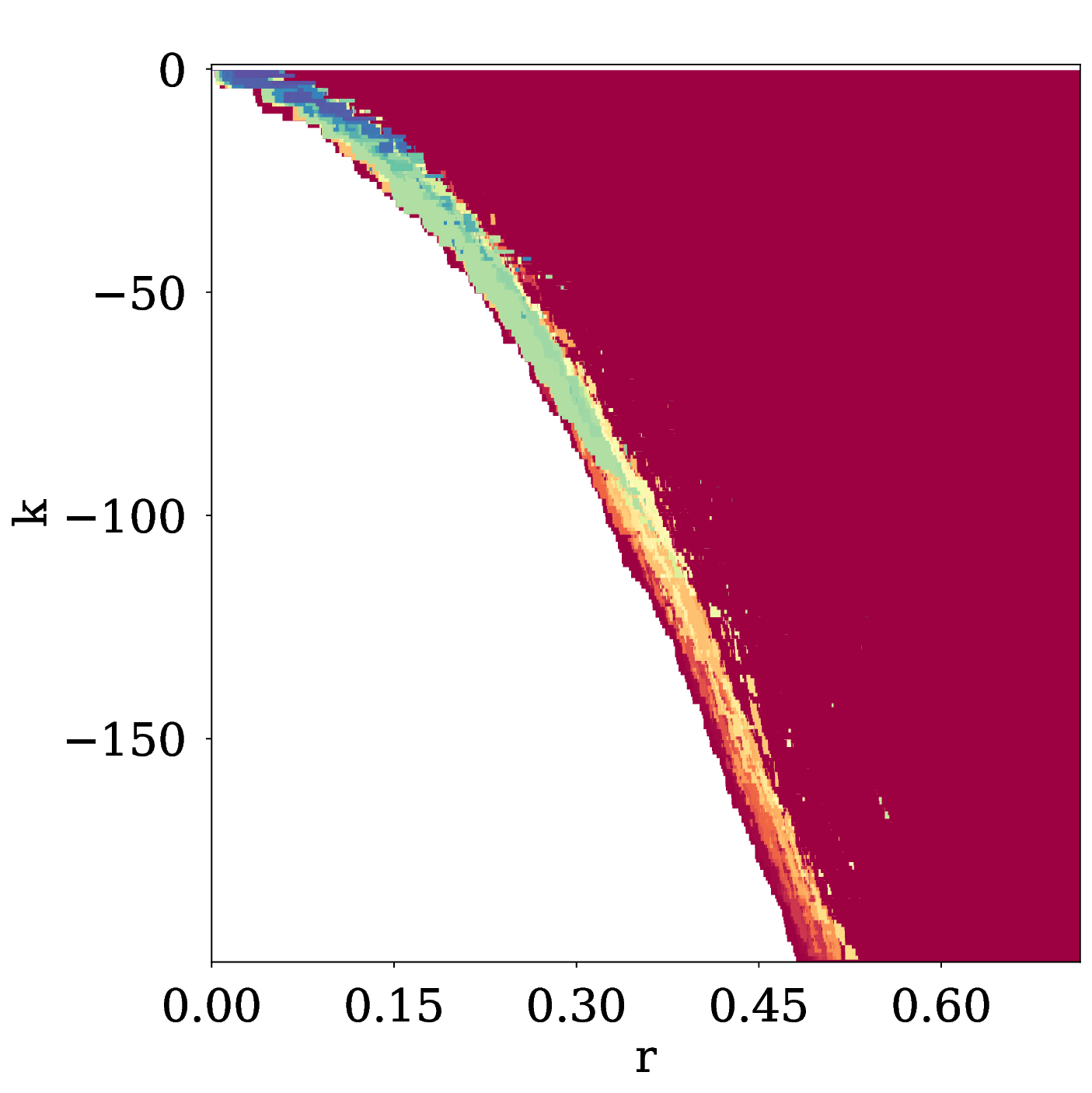}
\end{minipage}%
\begin{minipage}{0.30\textwidth}
\centering
\includegraphics[width=\linewidth]{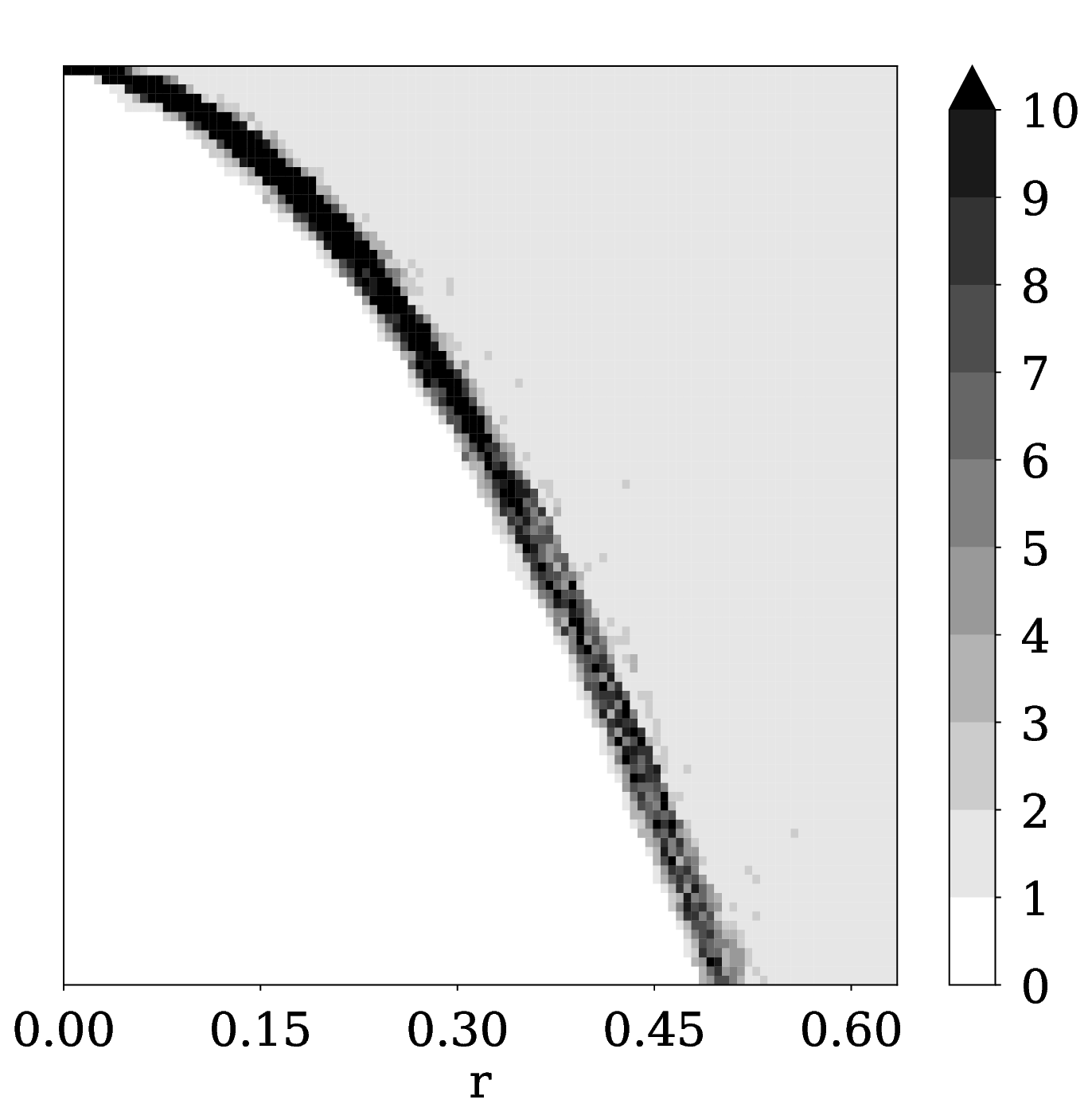}
\end{minipage}
\begin{minipage}{0.30\textwidth}
\centering
\includegraphics[width=\linewidth]{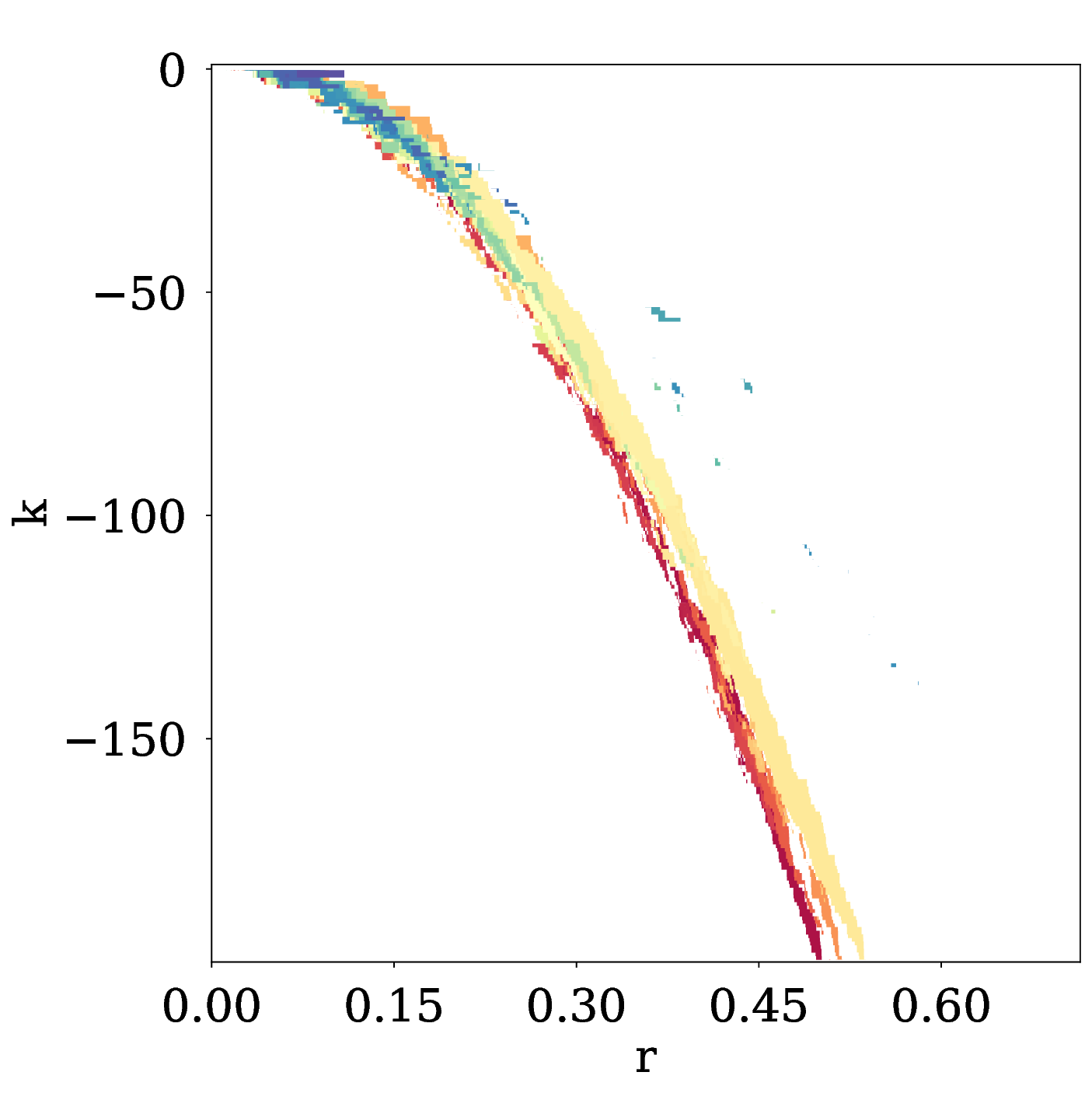}
\end{minipage}%
\begin{minipage}{0.30\textwidth}
\centering
\includegraphics[width=\linewidth]{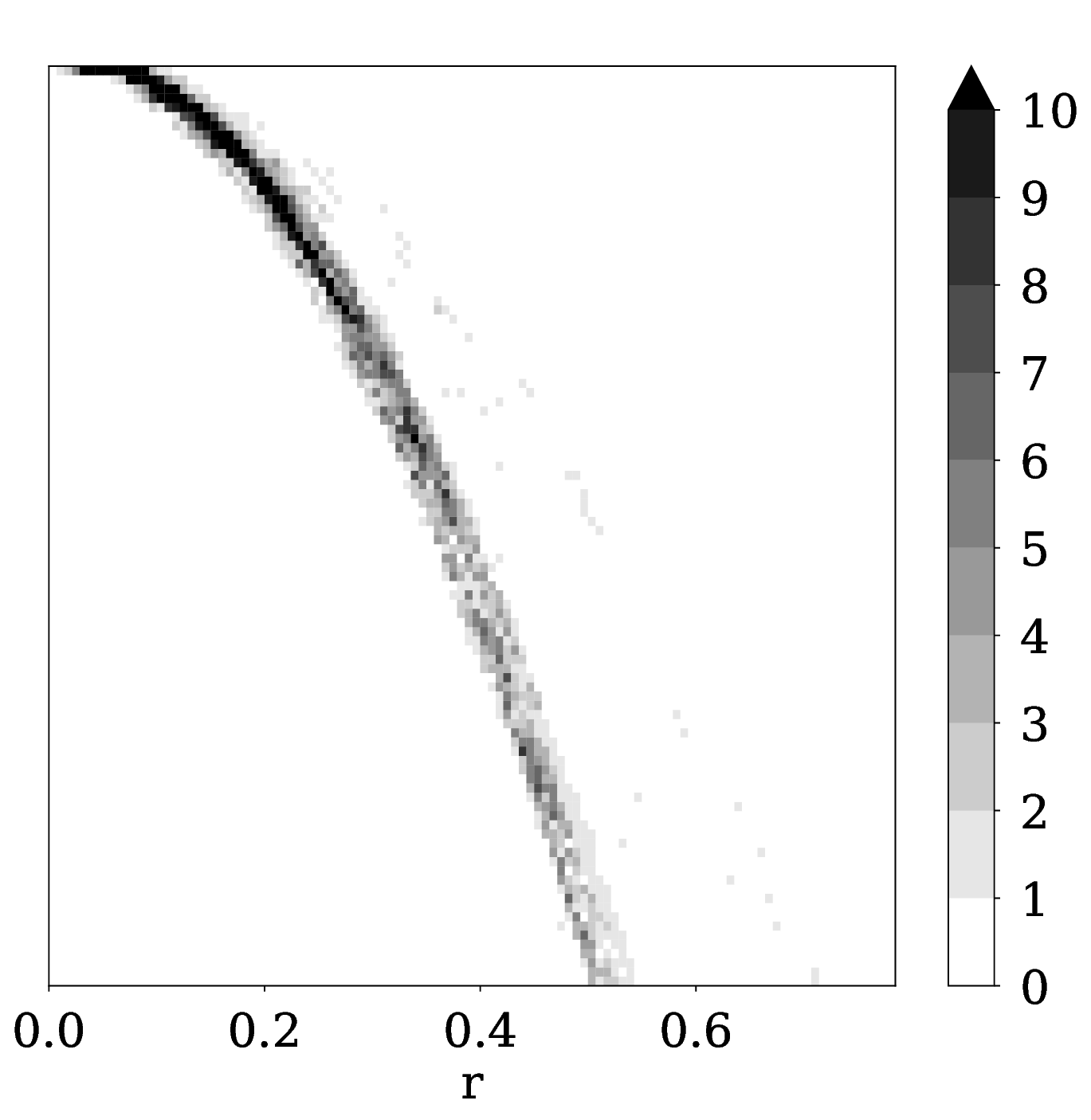}
\end{minipage}
\caption{Plots visualizing the MMA (left) and the Hilbert functions (right) for the Delaunay core bifiltration of a point cloud uniformly sampled from $[-1,1]^2$. The first and second rows correspond to homological dimensions $0$ and $1$, respectively.}
\label{fig:uniform_noise_delaunay_core}
\end{figure}

\subsubsection{Comparing with a Sublevel Delaunay-\v{C}ech Bifiltration}\label{sec:comparing_to_sublevel_delaunay_cech}

To demonstrate the performance of the Delaunay core bifiltration on a more complex dataset, we use the three annuli dataset, provided by the \texttt{multipers} library. This dataset consists of a sample from three annuli of different scales and densities, with added noise. For the Delaunay core bifiltration (see \cref{fig:delaunay_core_three_annulus_persistence}), we use $2000$ points equally divided between signal and noise, whereas for the sublevel Delaunay-\v{C}ech bifiltration (see \cref{fig:function_delaunay_three_annulus_persistence}) we use $20000$ points in total. The bottleneck in our implementation currently lies in the approximation of the multipersistence module. We anticipate improvements in this area in the future, which would accelerate the overall Delaunay core bifiltration computation.

In this section, we compute the Delaunay core bifiltration for $k\in\{1, 2, \dots, 200\}$. For the sublevel Delaunay-\v{C}ech bifiltration, we use a codensity function $\gamma \colon A \to \mathbb{R}$, defined as the logarithm of a Gaussian kernel density estimate with bandwidth parameter $0.10$. For easier visualization, summands with lifetimes below $0.01$ for Delaunay core persistence and $0.001$ for sublevel Delaunay-\v{C}ech persistence are excluded from the plots.

\begin{figure}[H]
\centering
\begin{minipage}{0.33\textwidth}
\centering
\includegraphics[width=\linewidth]{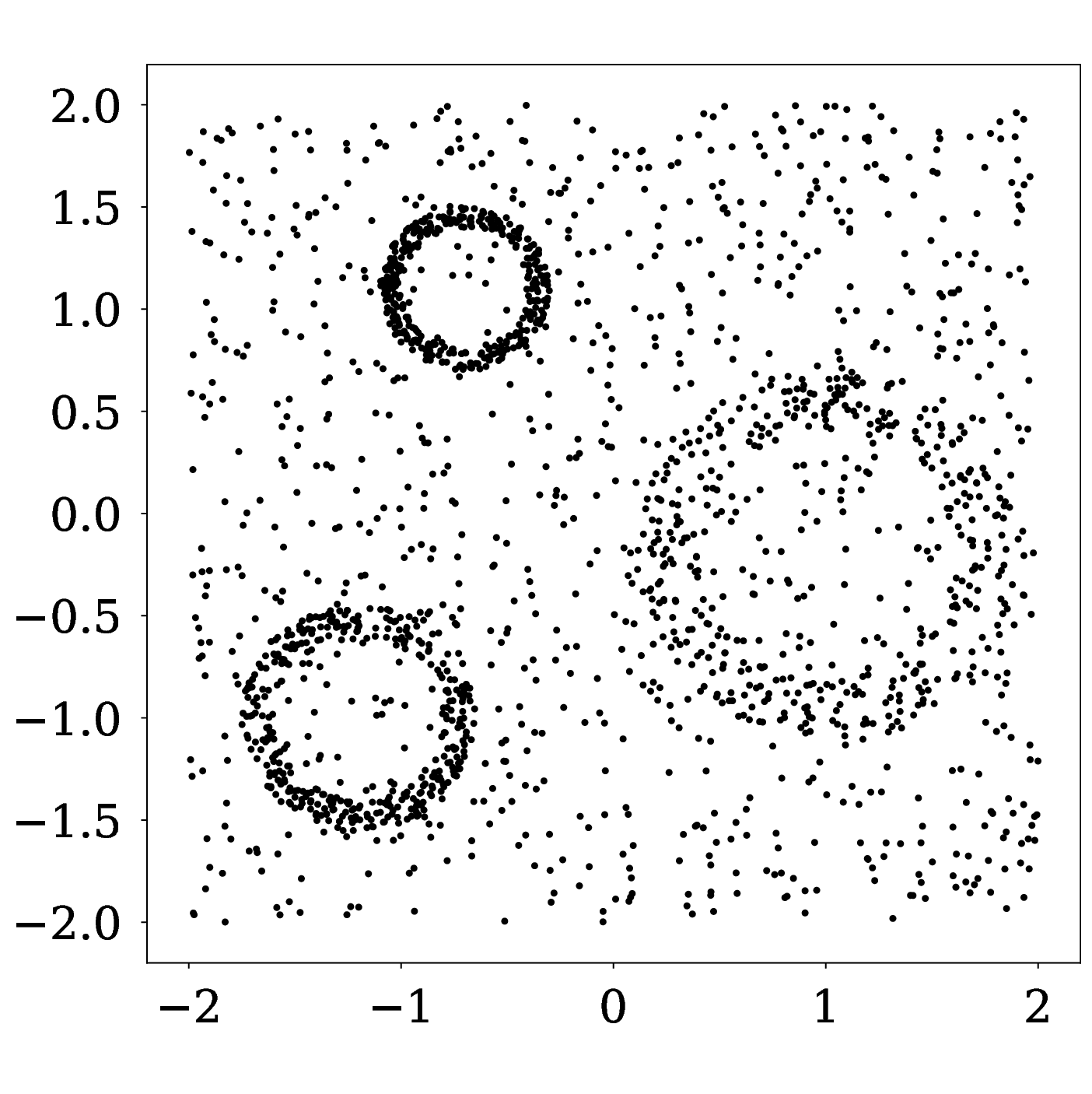}
\end{minipage}%
\begin{minipage}{0.33\textwidth}
\centering
\includegraphics[width=\linewidth]{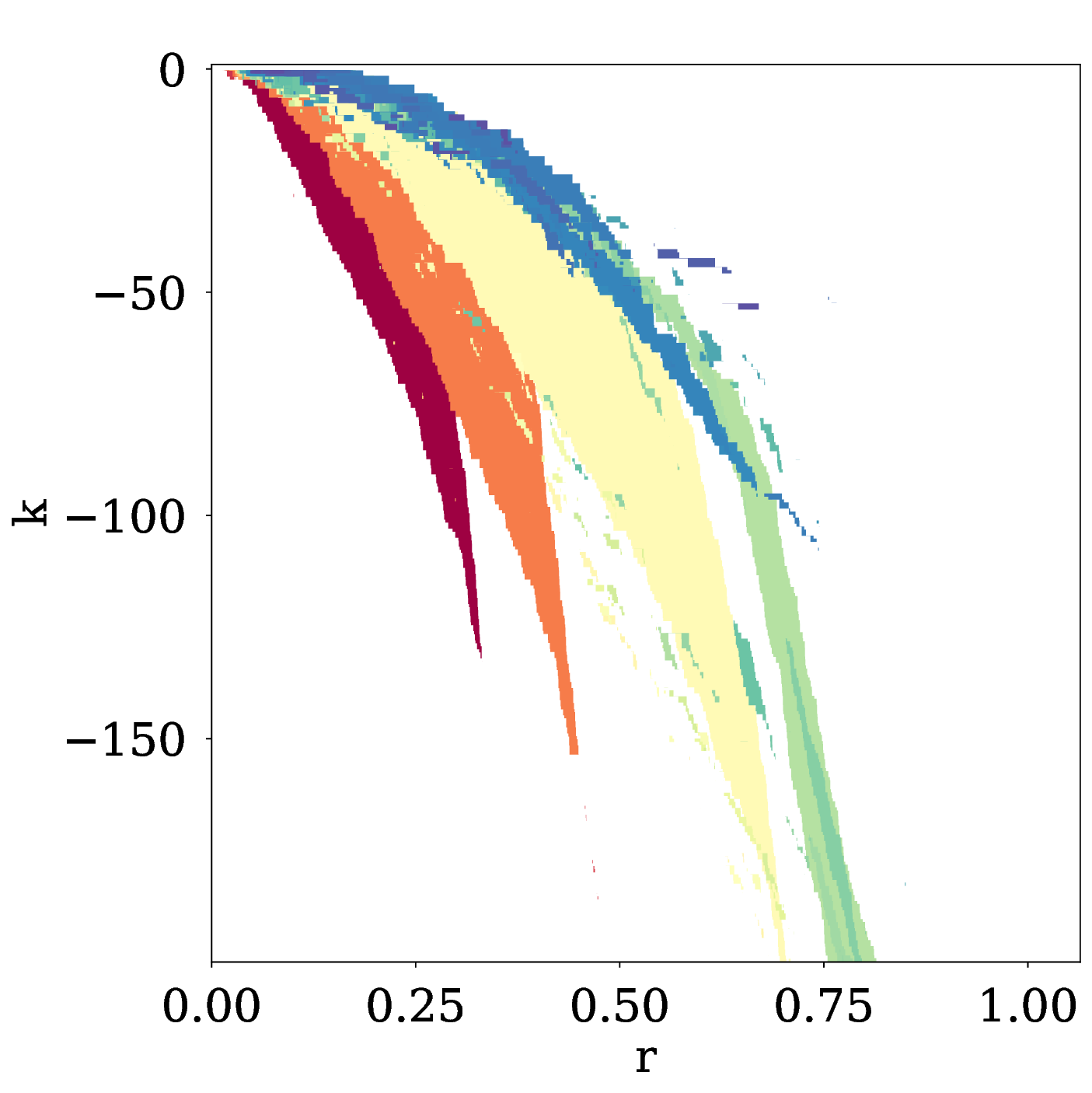}
\end{minipage}%
\begin{minipage}{0.33\textwidth}
\centering
\includegraphics[width=\linewidth]{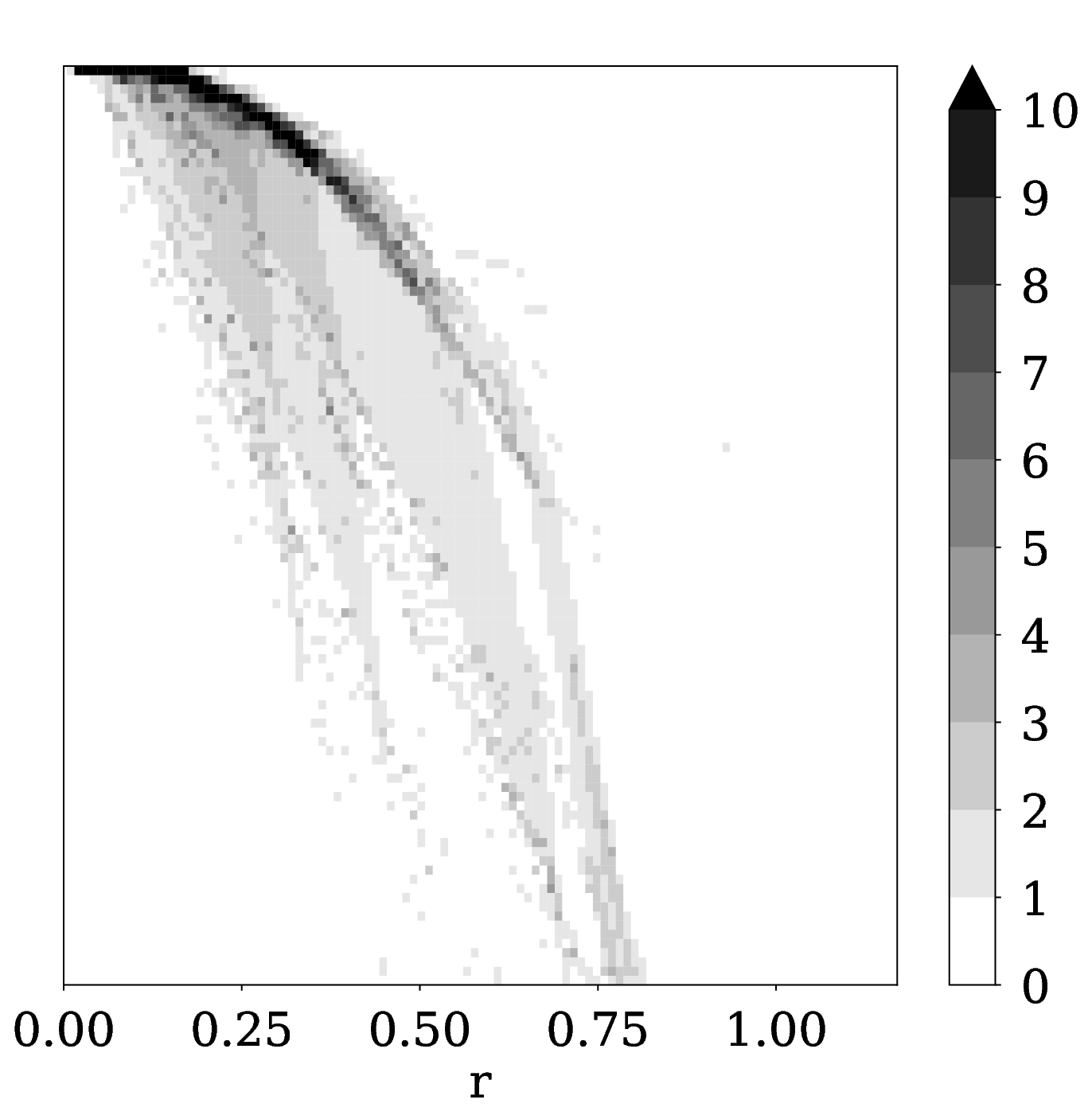}
\end{minipage} 
\caption{Point cloud sampled from three annuli with added noise (left), together with plots visualizing the MMA (middle) and the Hilbert function (right) for the Delaunay core bifiltration in homological dimension $1$.}
\label{fig:delaunay_core_three_annulus_persistence}
\end{figure}

\begin{figure}[H]
\centering
\begin{minipage}{0.33\textwidth}
\centering
\includegraphics[width=\linewidth]{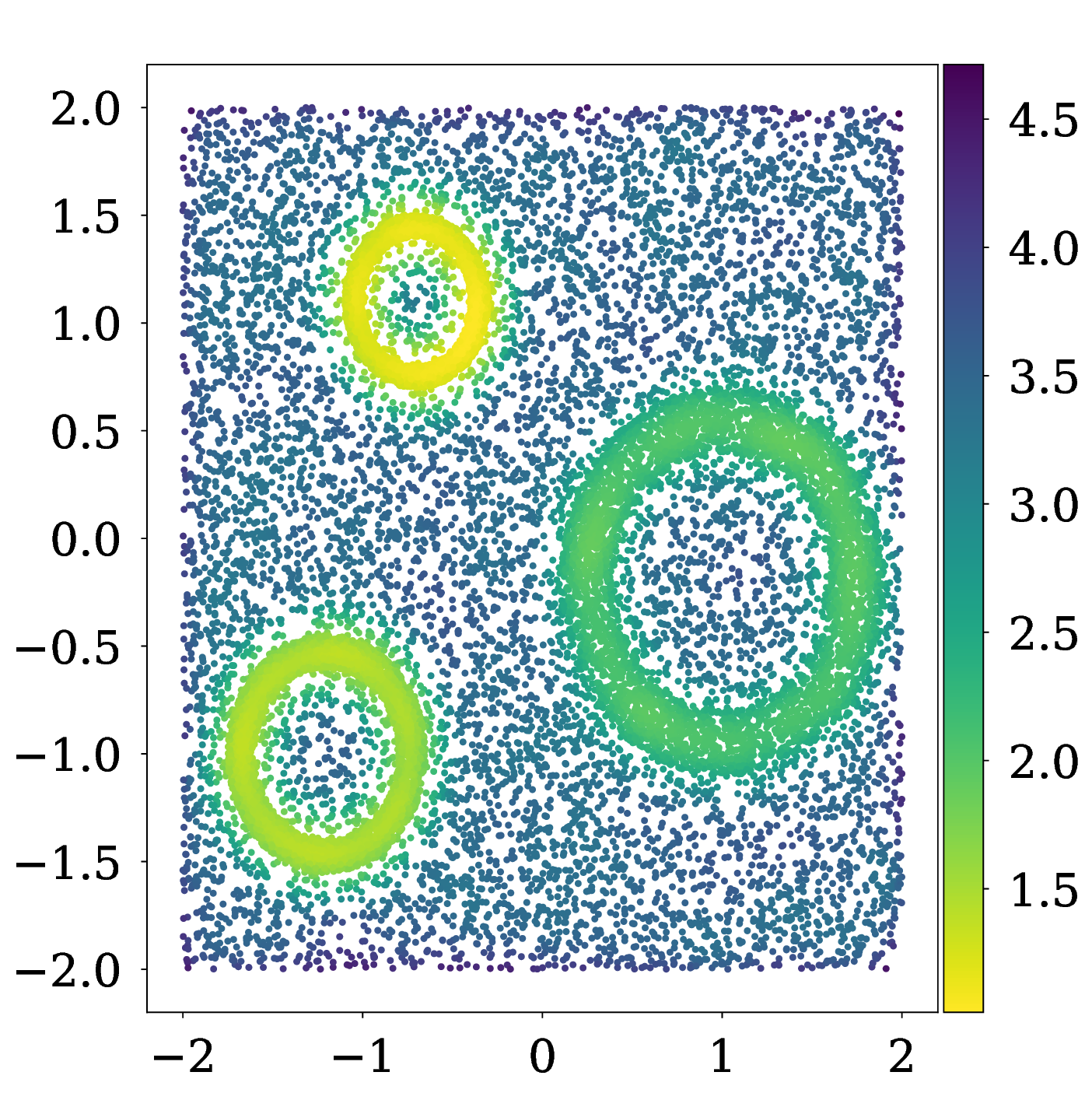}
\end{minipage}%
\begin{minipage}{0.33\textwidth}
\centering
\includegraphics[width=\linewidth]{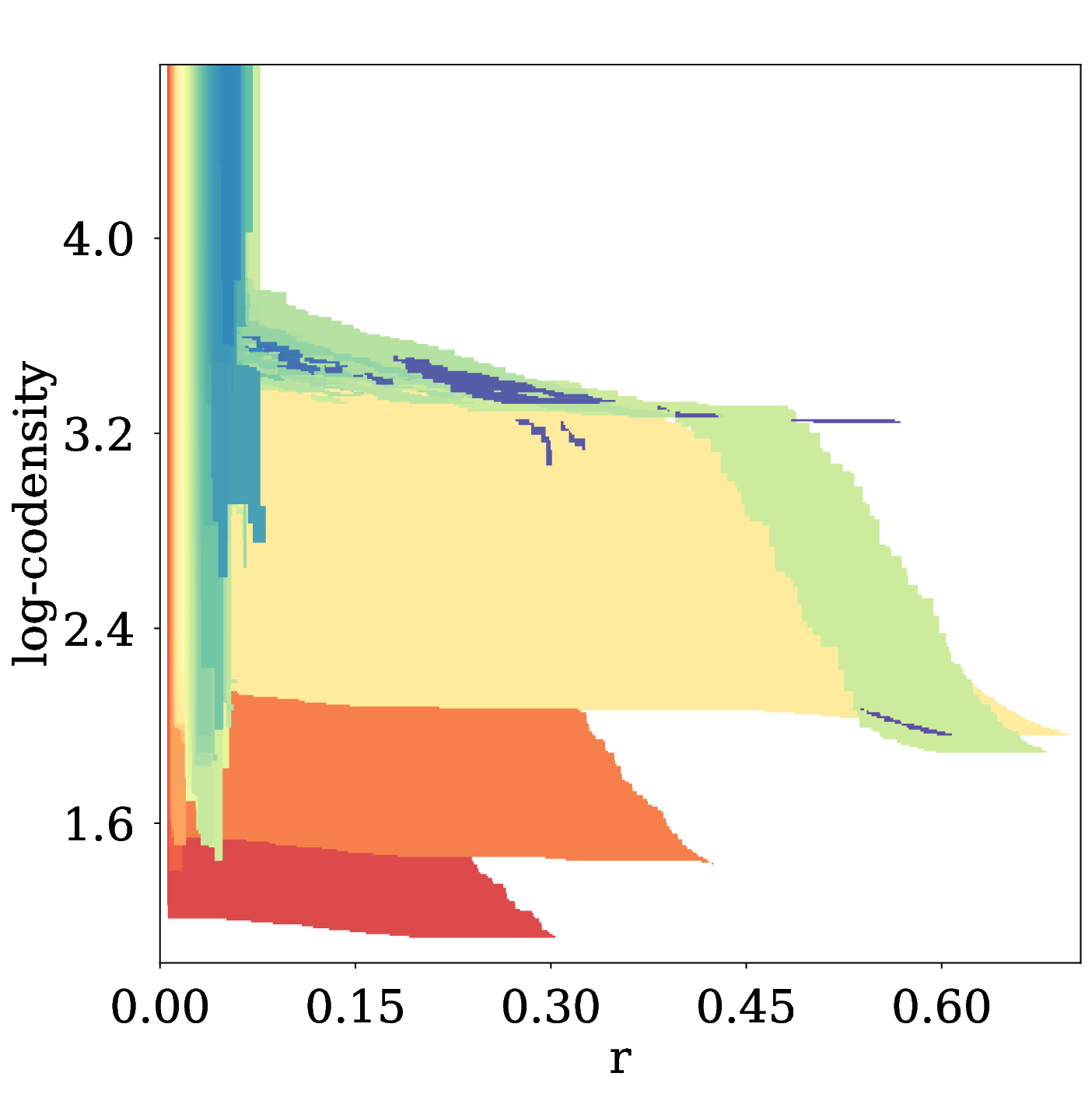}
\end{minipage}%
\begin{minipage}{0.33\textwidth}
\centering
\includegraphics[width=\linewidth]{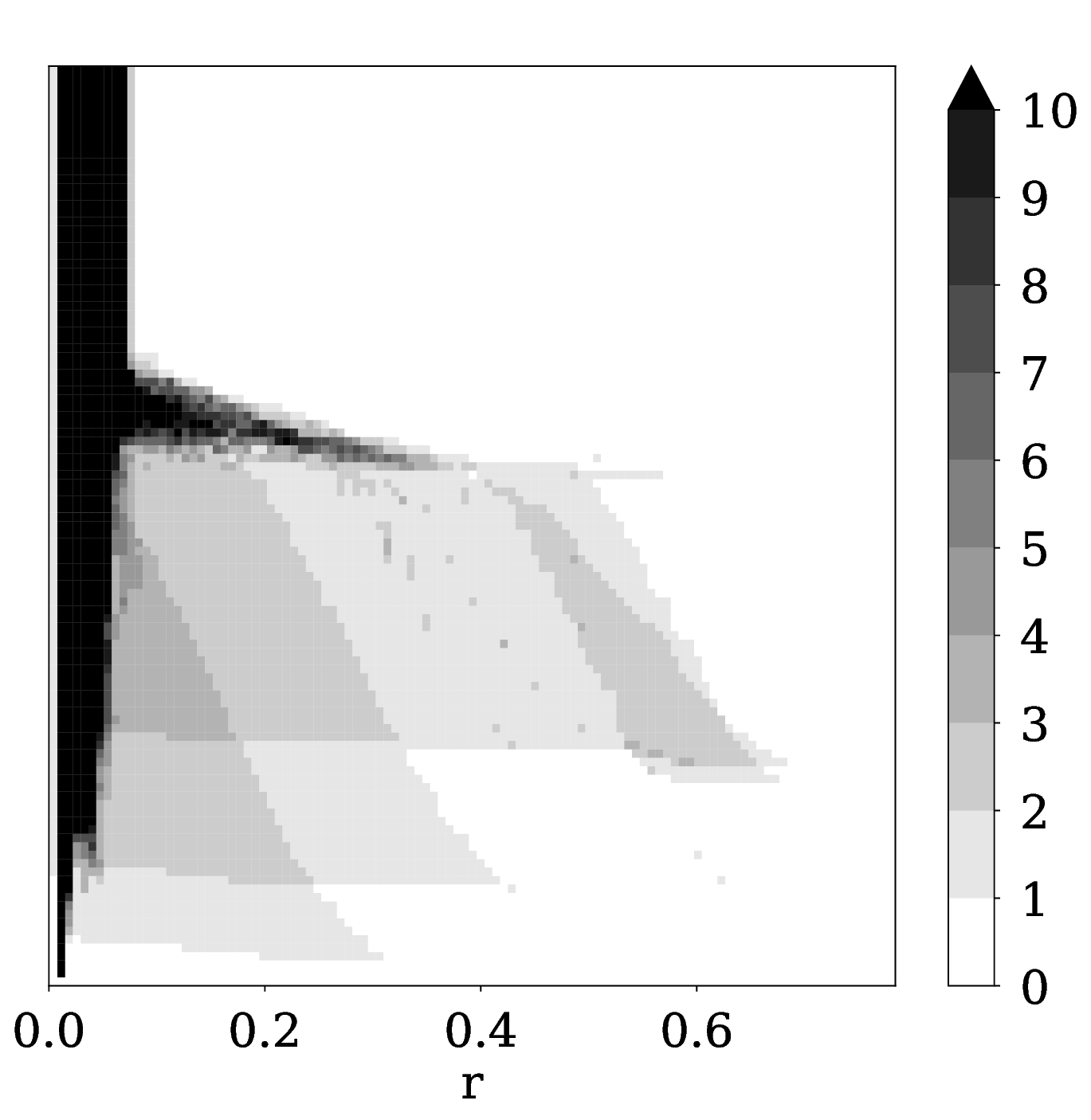}
\end{minipage} 
\caption{Point cloud sampled from three annuli with added noise (left), colored by the codensity function $\gamma$, together with plots visualizing the MMA (middle) and the Hilbert function (right) for the sublevel Delaunay-\v{C}ech bifiltration of $\gamma$ in homological dimension $1$.}
\label{fig:function_delaunay_three_annulus_persistence}
\end{figure}

Next, we analyze a synthetic clustering dataset introduced in \cite{mcinnes2017hdbscan} and also used in \cite{rolle2024stable}. It consists of $2309$ points in the plane, with six clusters of varying sizes, shapes, and densities, along with added noise. We include this dataset to examine how the Delaunay core bifiltration reflects the structure of connected components through their persistence across scale and density parameters. For this purpose, we compute and visualize $H_0$ persistence for both the Delaunay core (see \cref{fig:delaunay_core_clusterable_data_point_cloud}) and sublevel Delaunay-\v{C}ech bifiltrations (see \cref{fig:function_delaunay_clusterable_data_point_cloud}). Both bifiltrations reveal multiple connected components in persistent homology, suggesting the presence of clusters. Although the precise number of clusters is not obvious, the visualizations offer a good estimate. We also note that the results for the sublevel Delaunay-\v{C}ech bifiltration are sensitive to the choice of the bandwidth parameter, which may be suboptimal for this dataset.

\begin{figure}[H]
\centering
\begin{minipage}{0.33\textwidth}
\centering
\includegraphics[width=\linewidth]{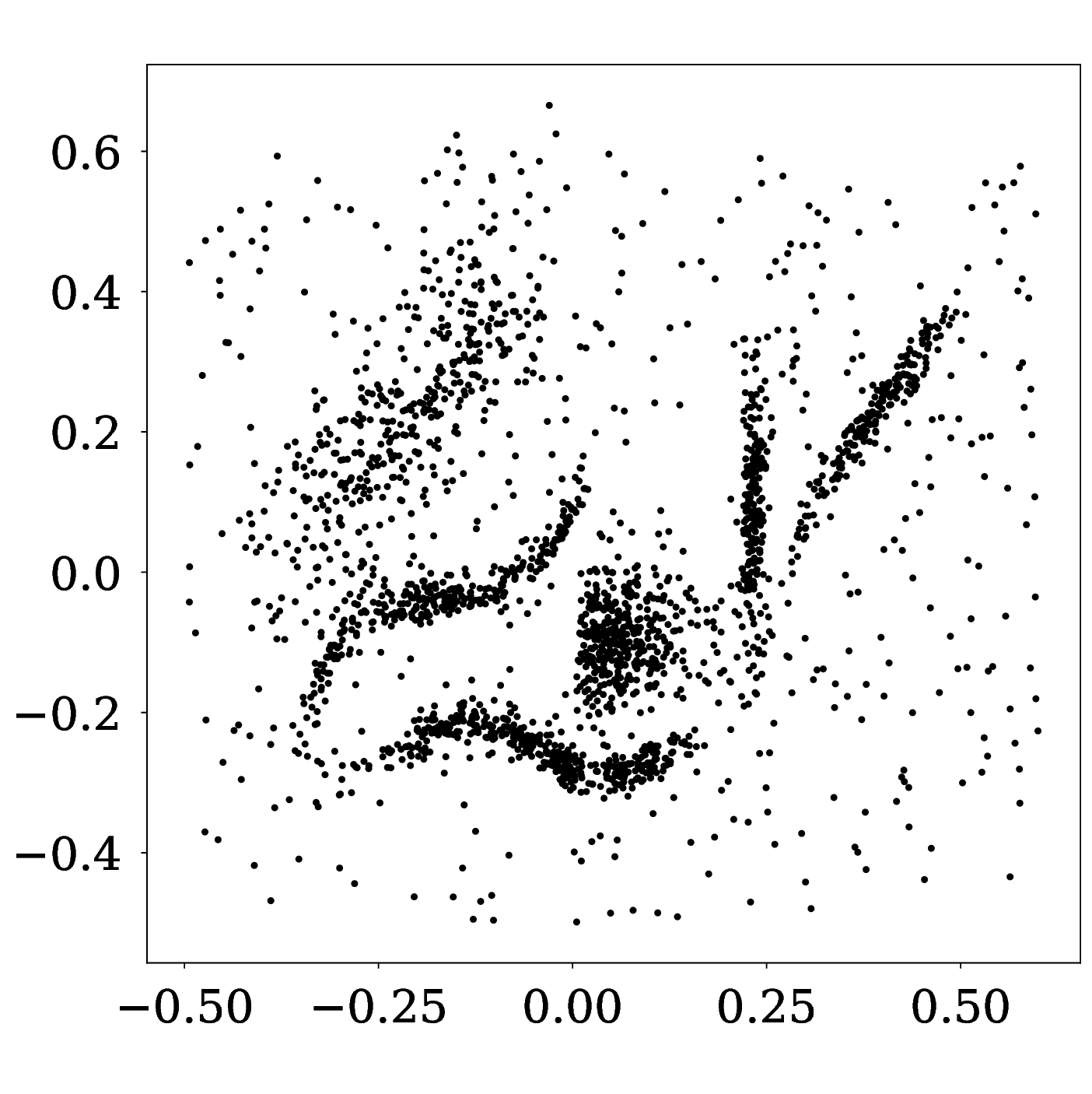}
\end{minipage}%
\begin{minipage}{0.33\textwidth}
\centering
\includegraphics[width=\linewidth]{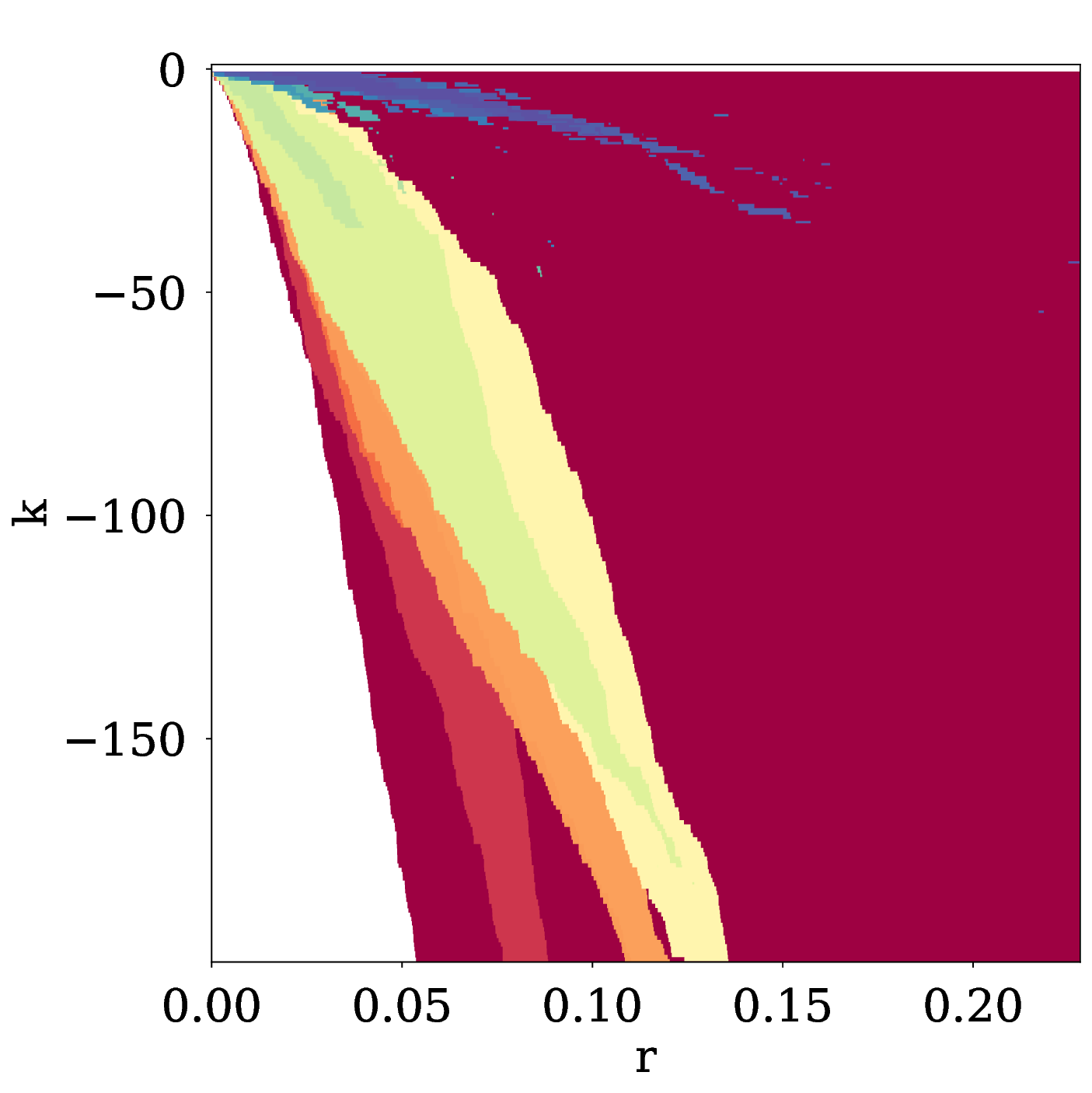}
\end{minipage}%
\begin{minipage}{0.33\textwidth}
\centering
\includegraphics[width=\linewidth]{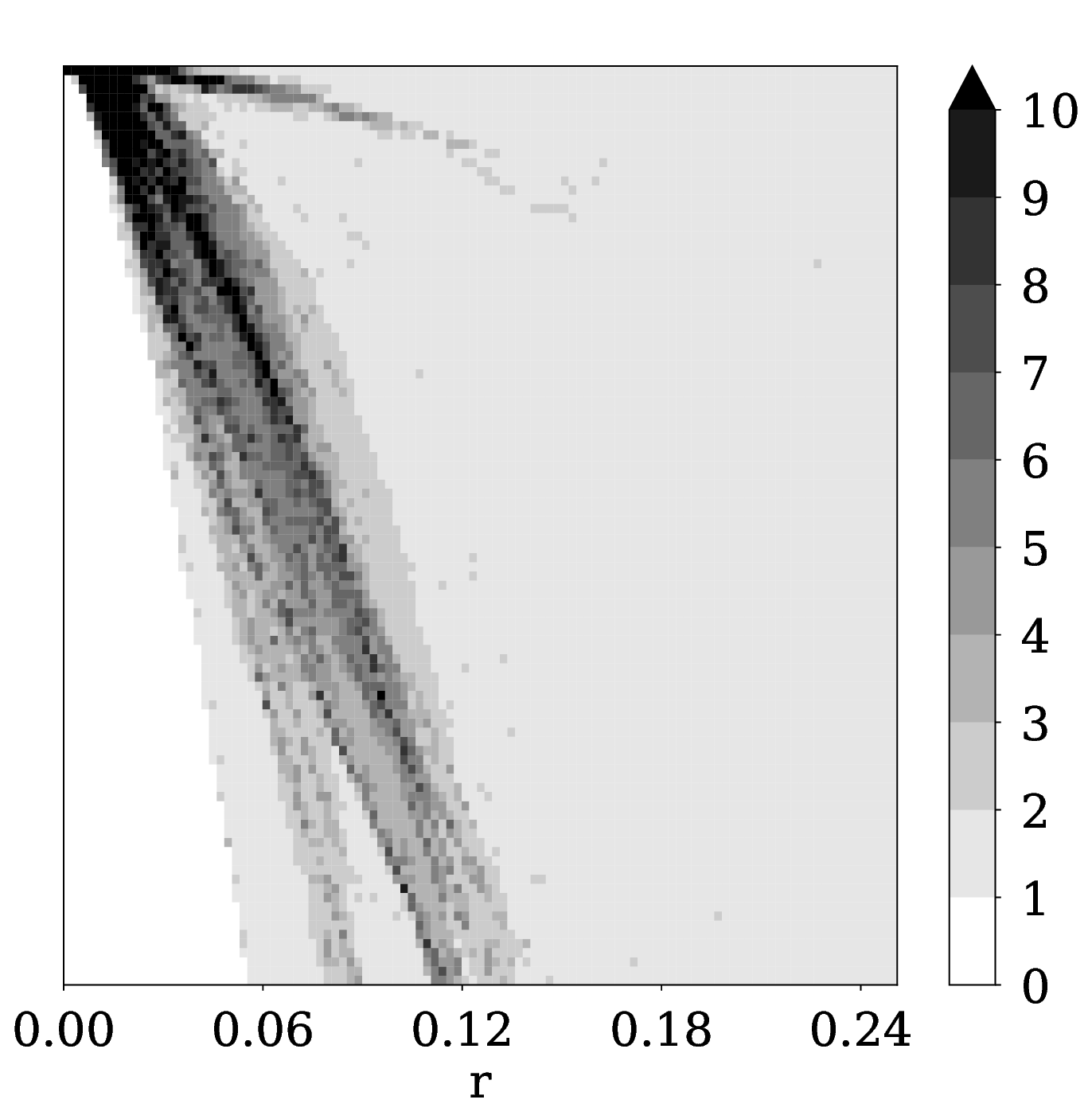}
\end{minipage}
\caption{The clustering dataset (left), together with plots visualizing the MMA (middle) and the Hilbert function (right) for the Delaunay core bifiltration in homological dimension $0$.}
\label{fig:delaunay_core_clusterable_data_point_cloud}
\end{figure}

\begin{figure}[H]
\centering
\begin{minipage}{0.33\textwidth}
\centering
\includegraphics[width=\linewidth]{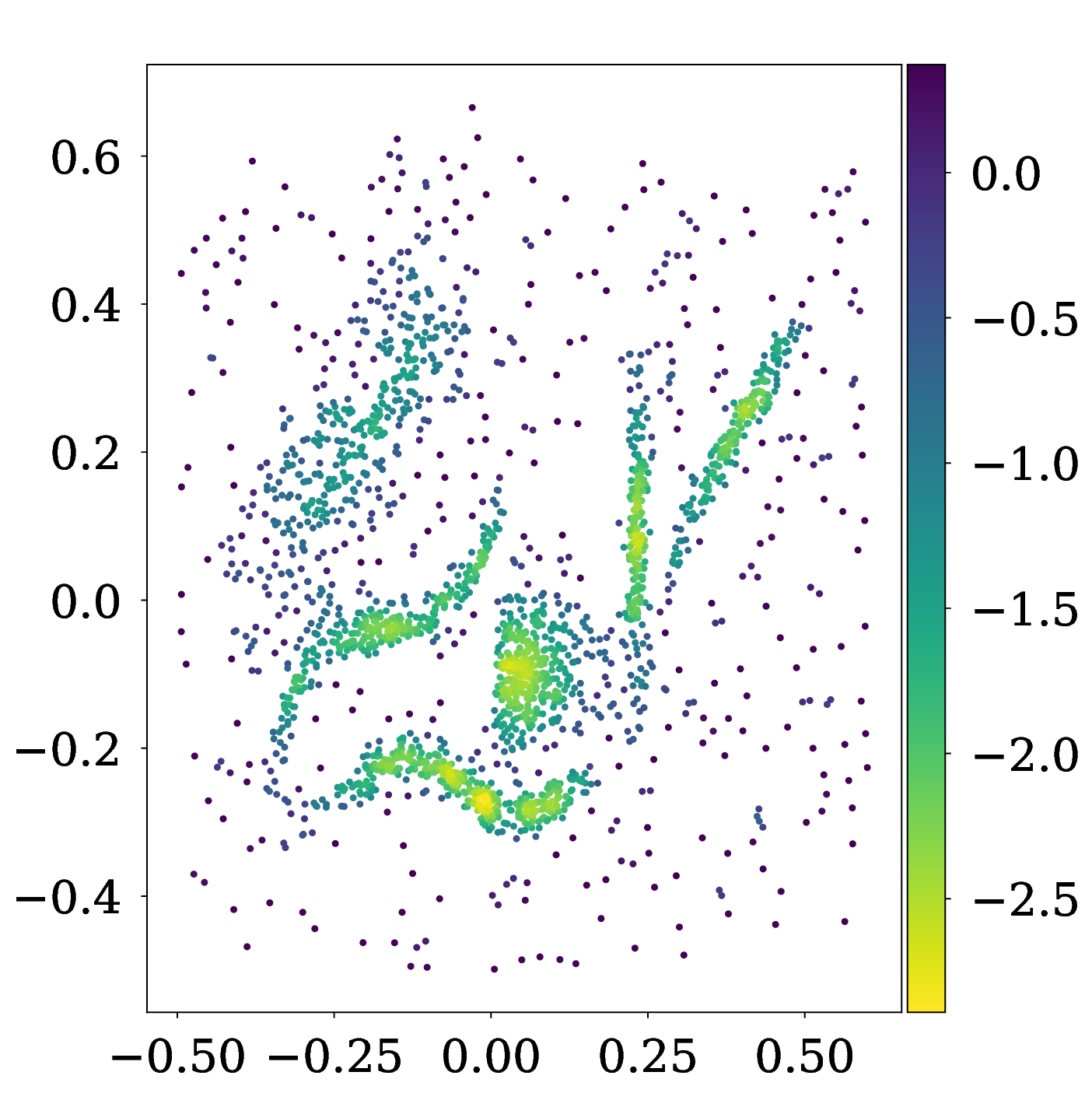}
\end{minipage}%
\begin{minipage}{0.33\textwidth}
\centering
\includegraphics[width=\linewidth]{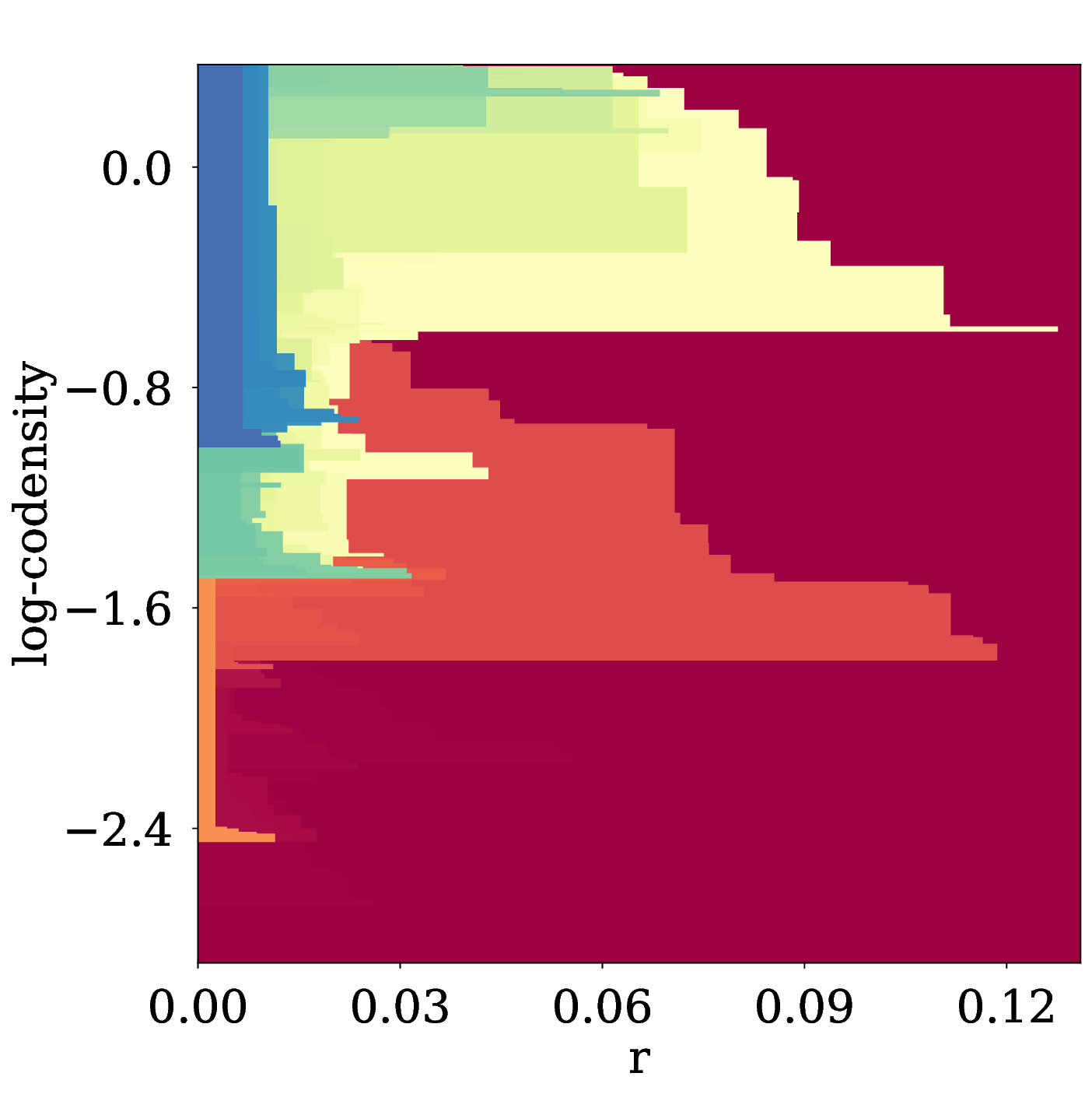}
\end{minipage}%
\begin{minipage}{0.33\textwidth}
\centering
\includegraphics[width=\linewidth]{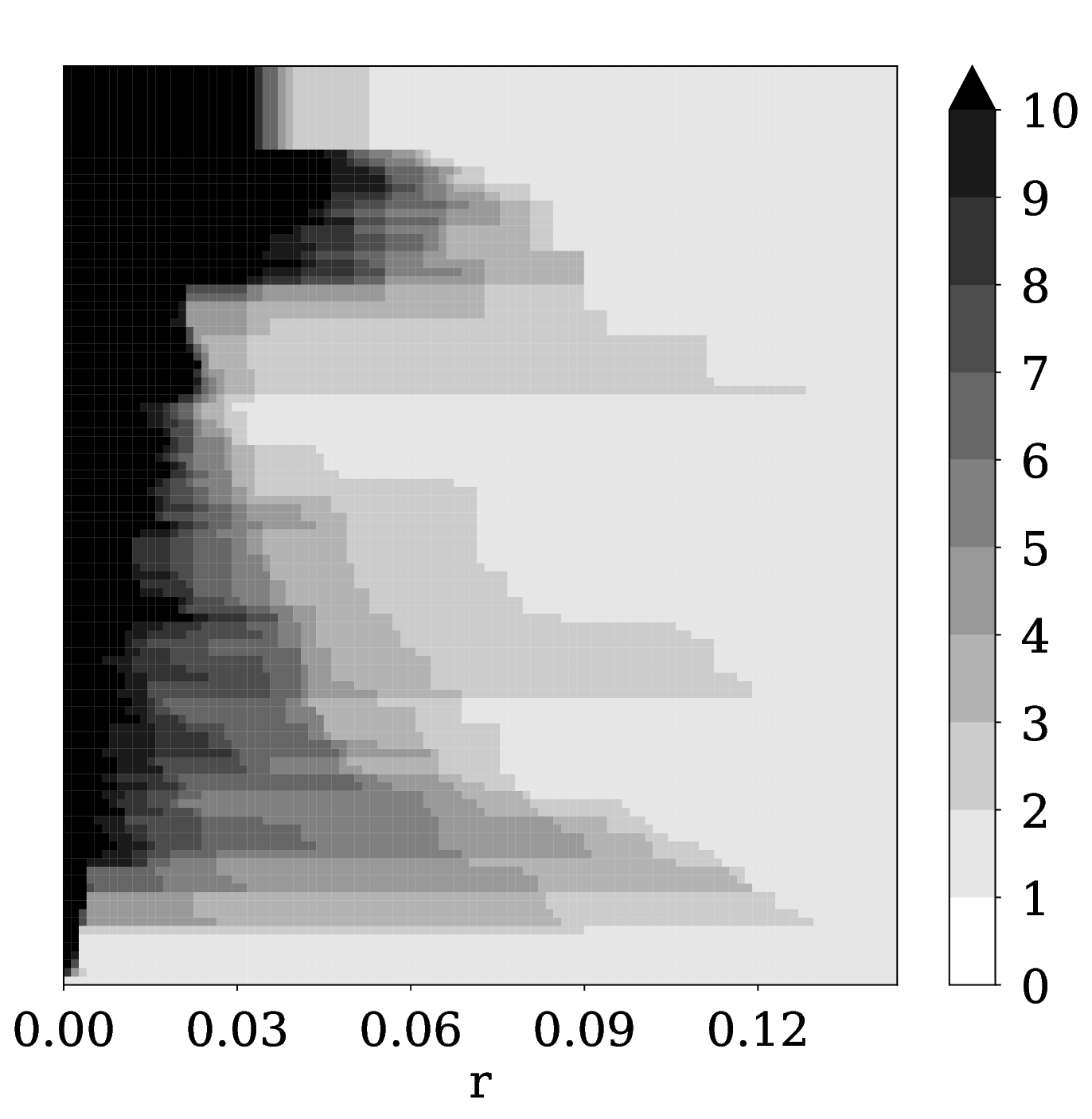}
\end{minipage}
\caption{The clustering dataset (left), colored by the codensity function $\gamma$, together with plots visualizing the MMA (middle) and the Hilbert function (right) for the sublevel Delaunay-\v{C}ech bifiltration of $\gamma$ in homological dimension $0$.}
\label{fig:function_delaunay_clusterable_data_point_cloud}
\end{figure}

\subsubsection{Comparing with the Multicover Bifiltration}\label{sec:comparing_to_multicover_rhomboid}

In this section, we compare the Delaunay core bifiltration to the multicover bifiltration, which we compute via the unsliced rhomboid tiling bifiltration from \cite{Corbet2023}. The rhomboid tiling is a polyhedral cell complex introduced in \cite{Edelsbrunner2021} and an efficient algorithm for computing it was proposed in \cite{Edelsbrunner2023}. This algorithm was extended in \cite{Corbet2023} to compute the unsliced and sliced rhomboid tiling bifiltrations, which are both weakly equivalent to the multicover bifiltration for Euclidean point clouds. Since the unsliced rhomboid tiling bifiltration is faster to compute and of smaller size than the sliced rhomboid tiling bifiltration, this is the one we use in our experiments.

For computing the unsliced rhomboid tiling bifiltration, we use the \texttt{rhomboidtiling}\footnote{\url{https://github.com/geoo89/rhomboidtiling}} software to construct its free implicit representation (FIREP). To compute Hilbert functions for the rhomboid tiling bifiltration, we use the \texttt{multipers} library, which first computes the minimal presentation, using \texttt{mpfree}\footnote{\url{https://bitbucket.org/mkerber/mpfree}} as the backend, to reduce the size of the bifiltration.

To visually compare the two bifiltrations, we perform an experiment similar to \cite{Corbet2023}. We compute the Hilbert function for $H_1$ for both the Delaunay core and the unsliced rhomboid tiling bifiltration on noisy samples from $S^1 \subseteq \mathbb{R}^2$ using $k = 1,2,\ldots,100$. Each point cloud contains $100$ points, with $n_\text{signal}$ points uniformly sampled from $S^1$ and perturbed with Gaussian noise, and $n_\text{noise}$ outliers uniformly sampled from $[-1,1]^2$. We consider three point clouds with $n_\text{noise} \in \{0,30,70\}$. The point clouds and their corresponding Hilbert functions are shown in \cref{fig:rhomboid_tiling_vs_delaunay_core_hilbert_function}.

\begin{figure}[H]
\centering
\begin{minipage}{0.32\textwidth}
\centering
\includegraphics[width=\linewidth]{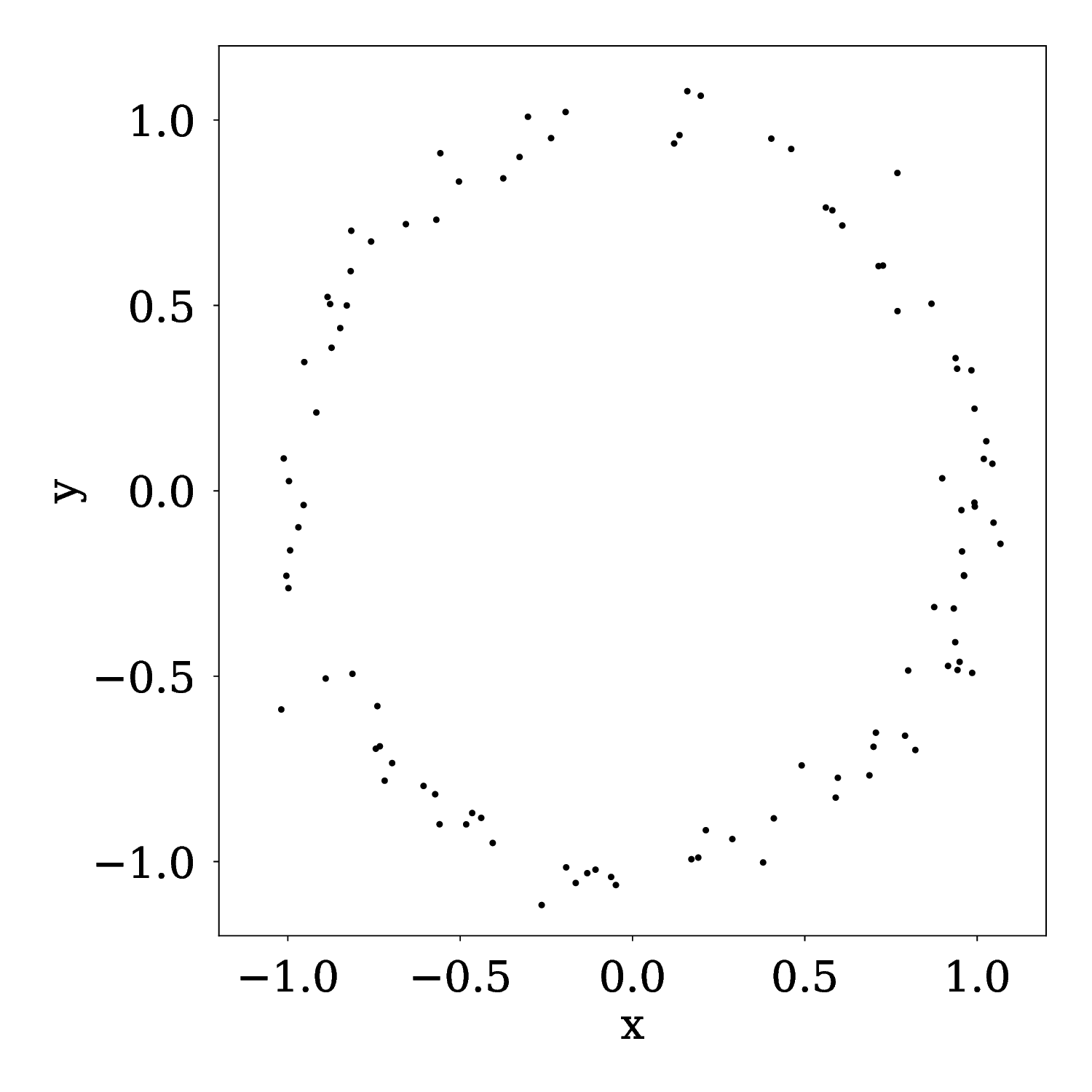}
\end{minipage}%
\begin{minipage}{0.32\textwidth}
\centering
\includegraphics[width=\linewidth]{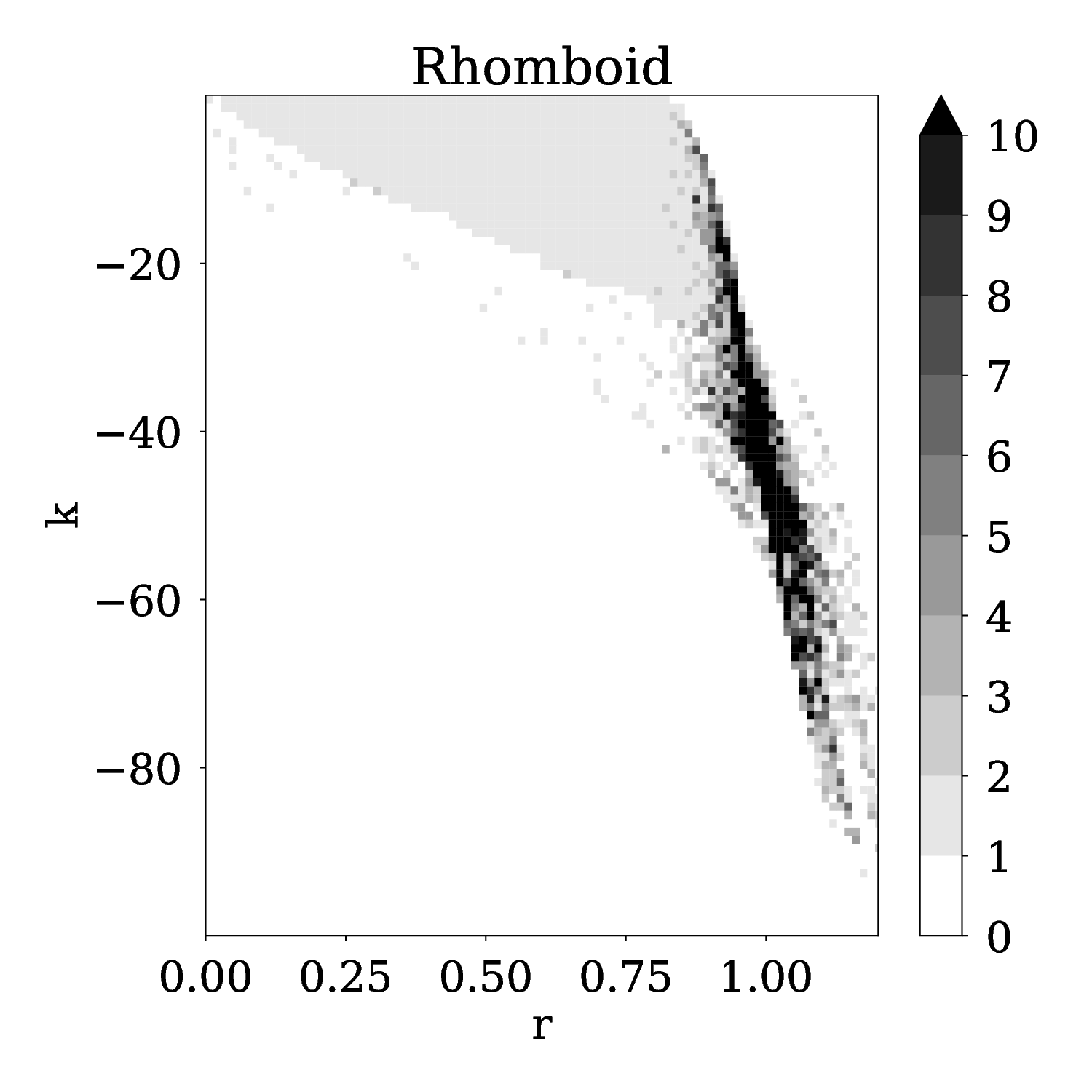}
\end{minipage}%
\begin{minipage}{0.32\textwidth}
\centering
\includegraphics[width=\linewidth]{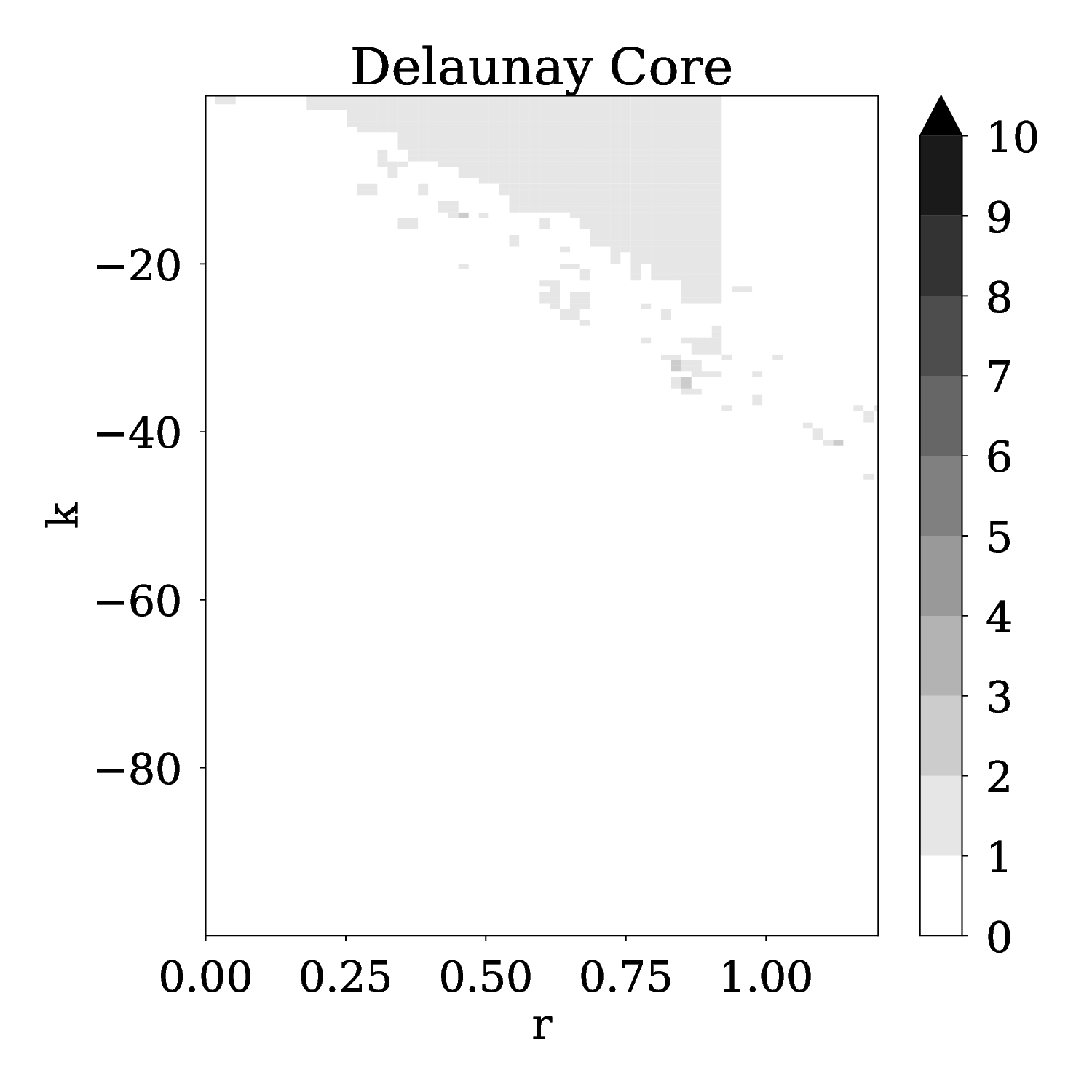}
\end{minipage}
\vspace{0.2em}
\begin{minipage}{0.32\textwidth}
\centering
\includegraphics[width=\linewidth]{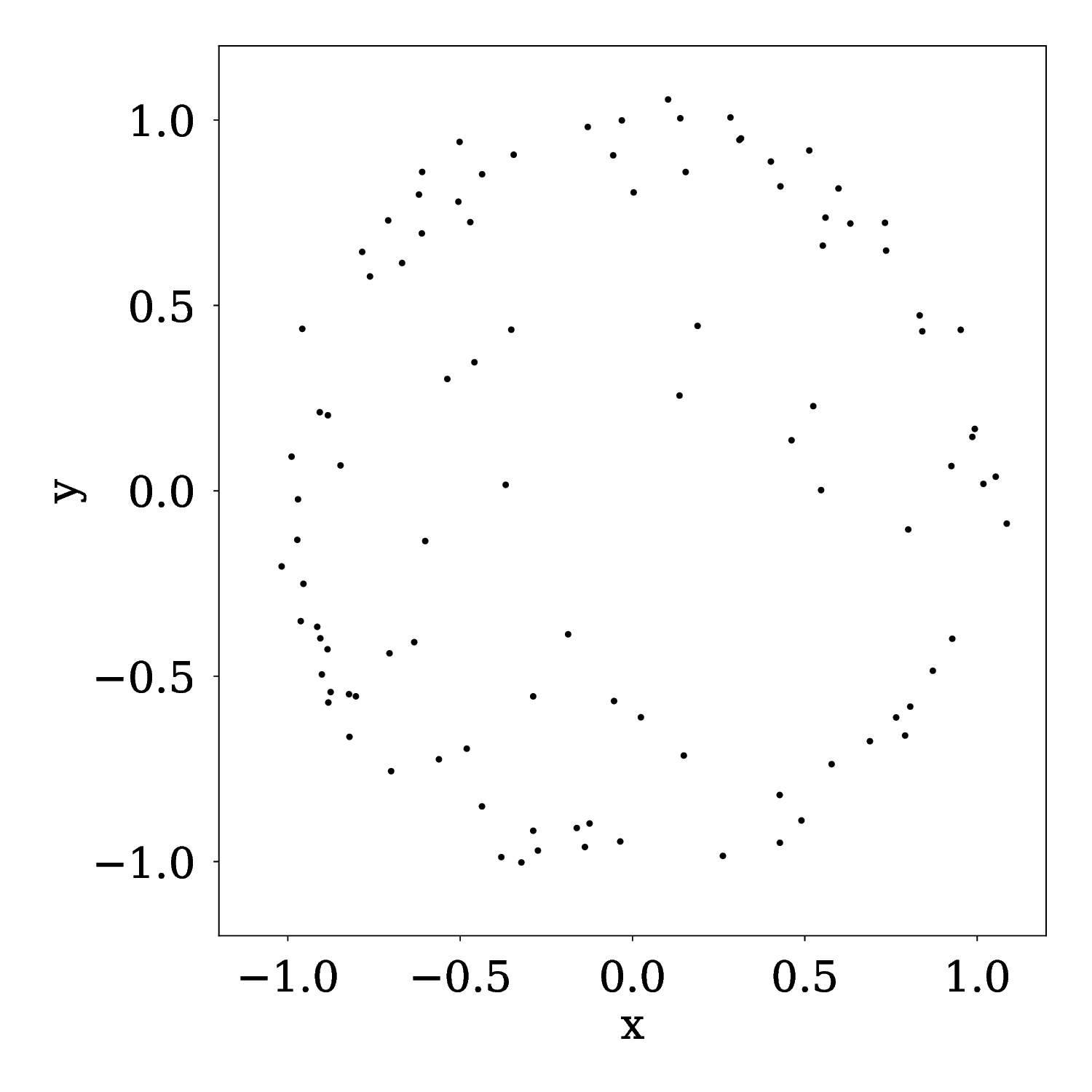}
\end{minipage}%
\begin{minipage}{0.32\textwidth}
\centering
\includegraphics[width=\linewidth]{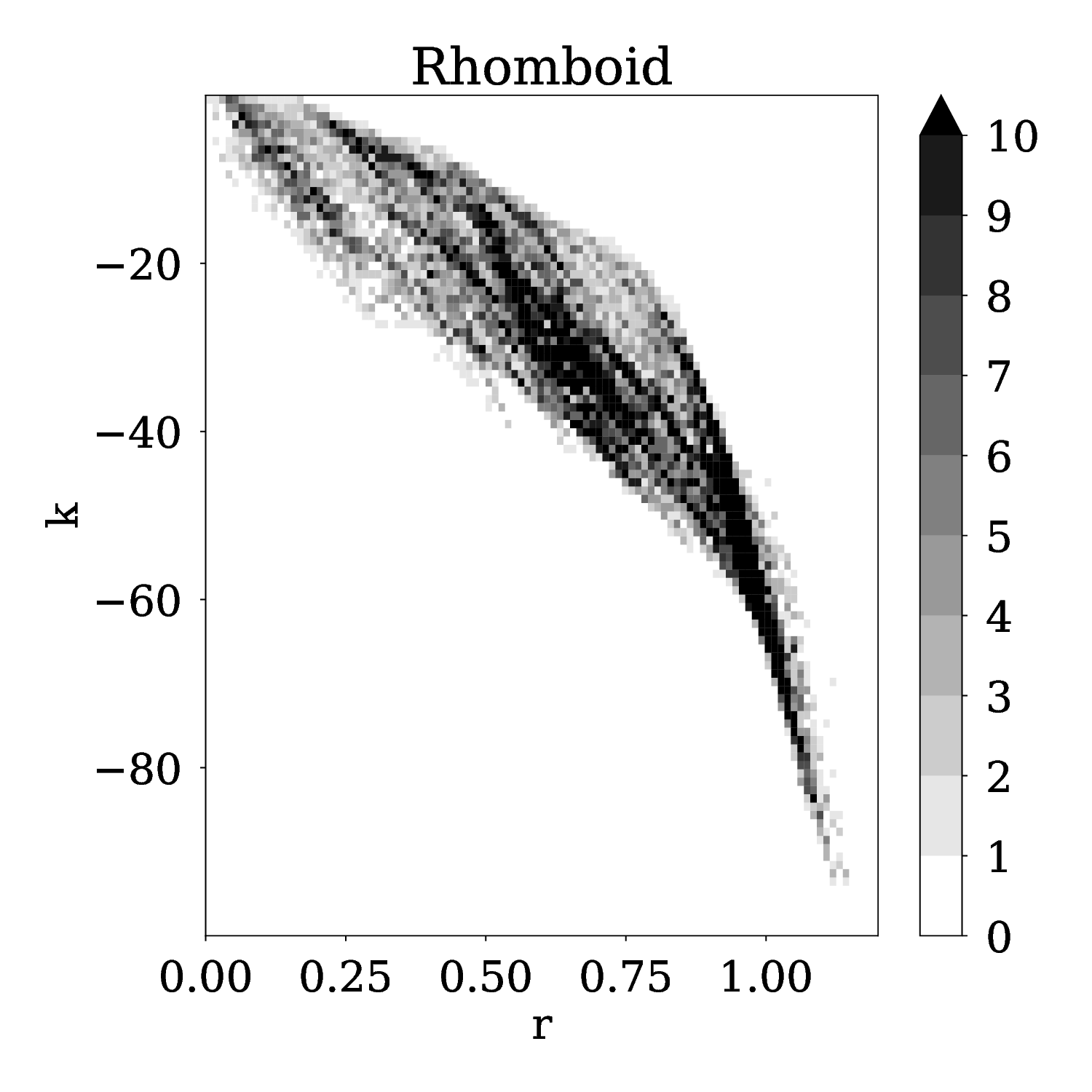}
\end{minipage}%
\begin{minipage}{0.32\textwidth}
\centering
\includegraphics[width=\linewidth]{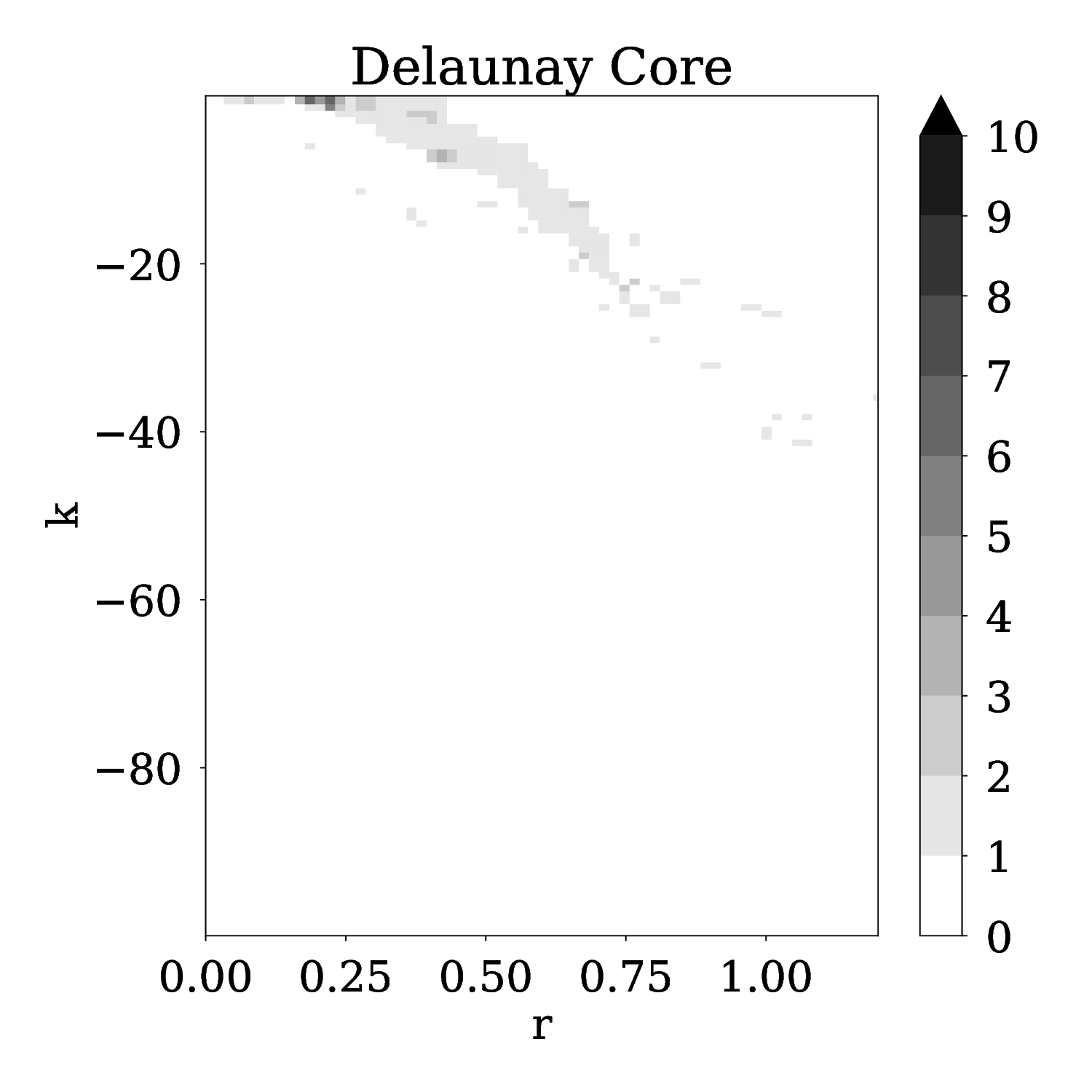}
\end{minipage}
\vspace{0.2em}
\begin{minipage}{0.32\textwidth}
\centering
\includegraphics[width=\linewidth]{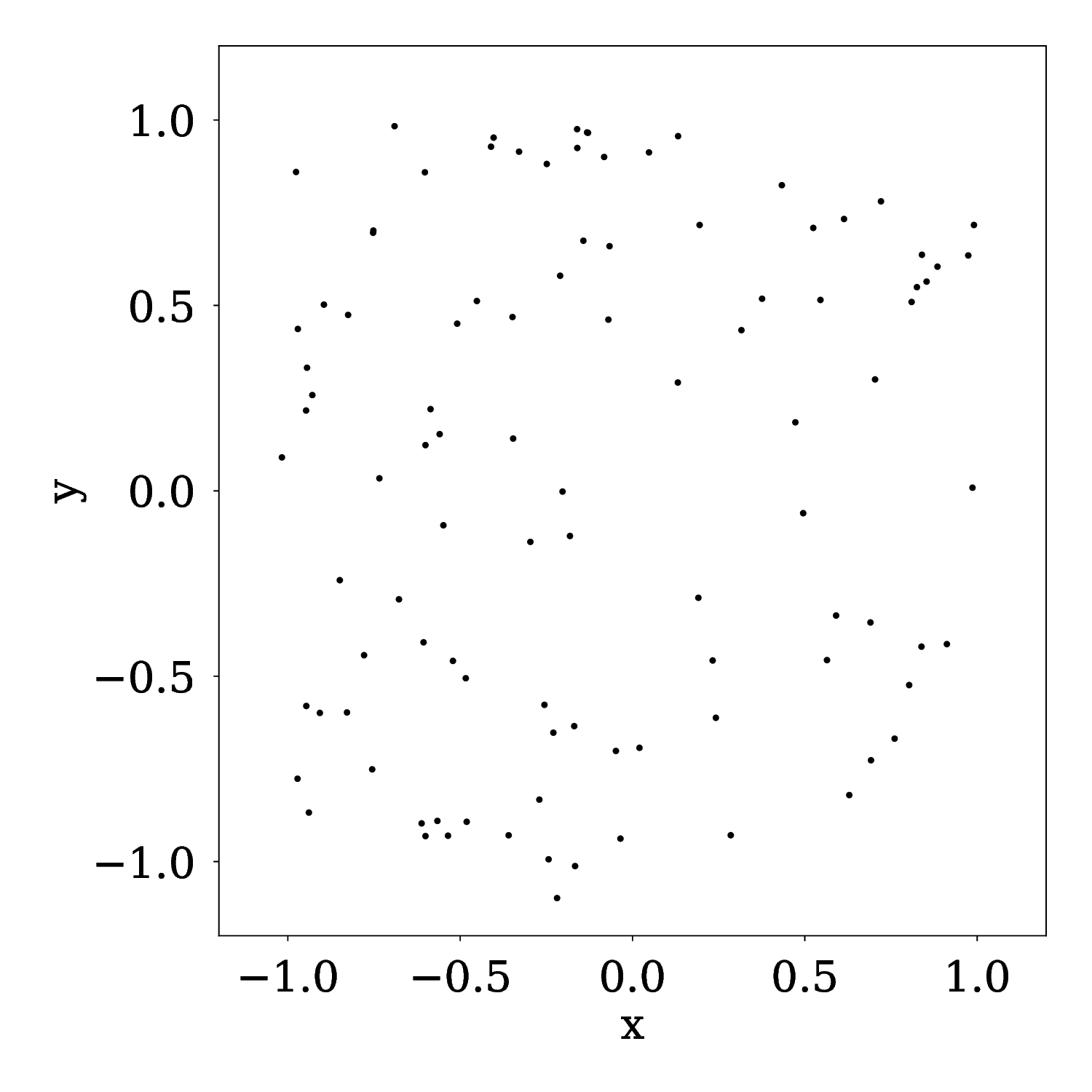}
\end{minipage}%
\begin{minipage}{0.32\textwidth}
\centering
\includegraphics[width=\linewidth]{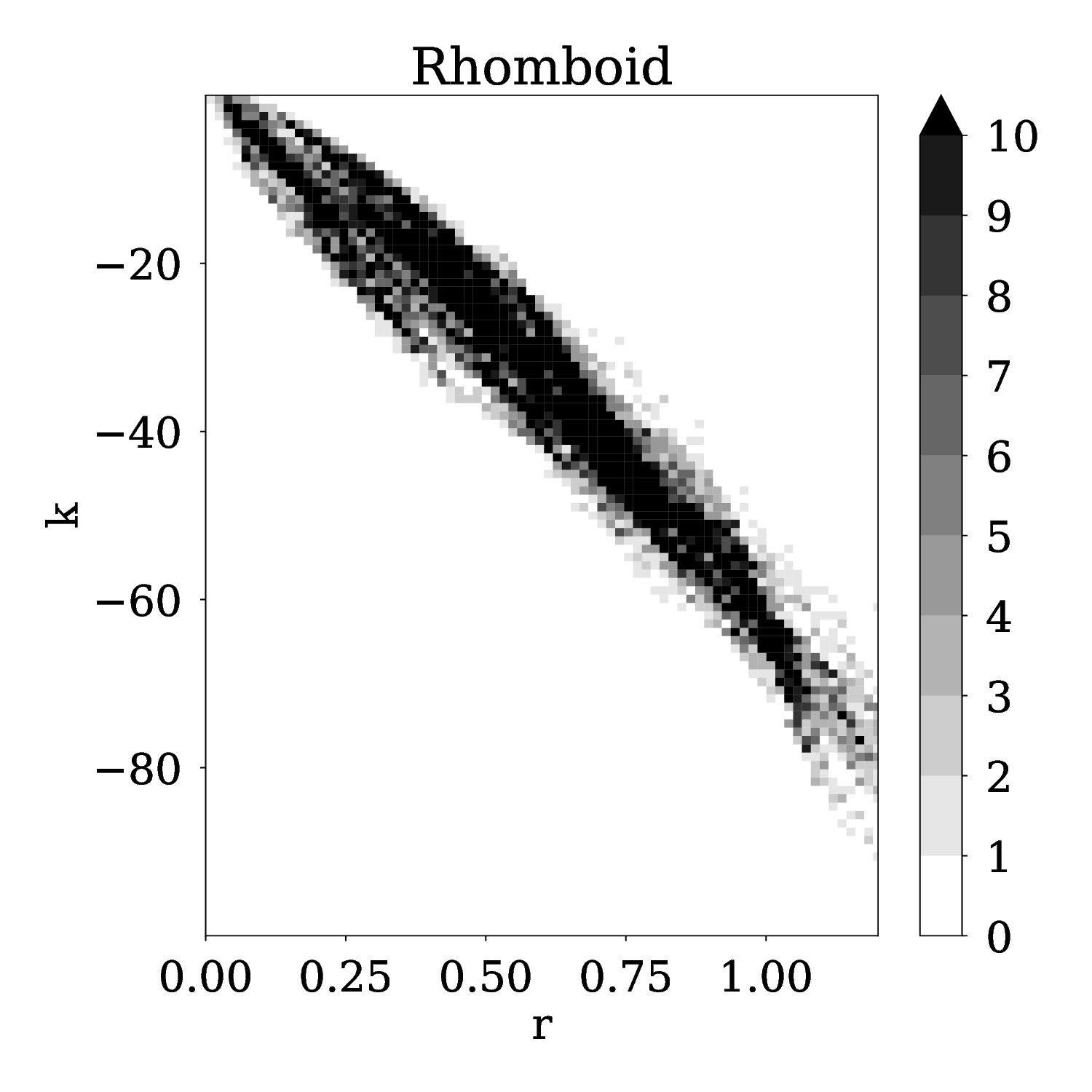}
\end{minipage}%
\begin{minipage}{0.32\textwidth}
\centering
\includegraphics[width=\linewidth]{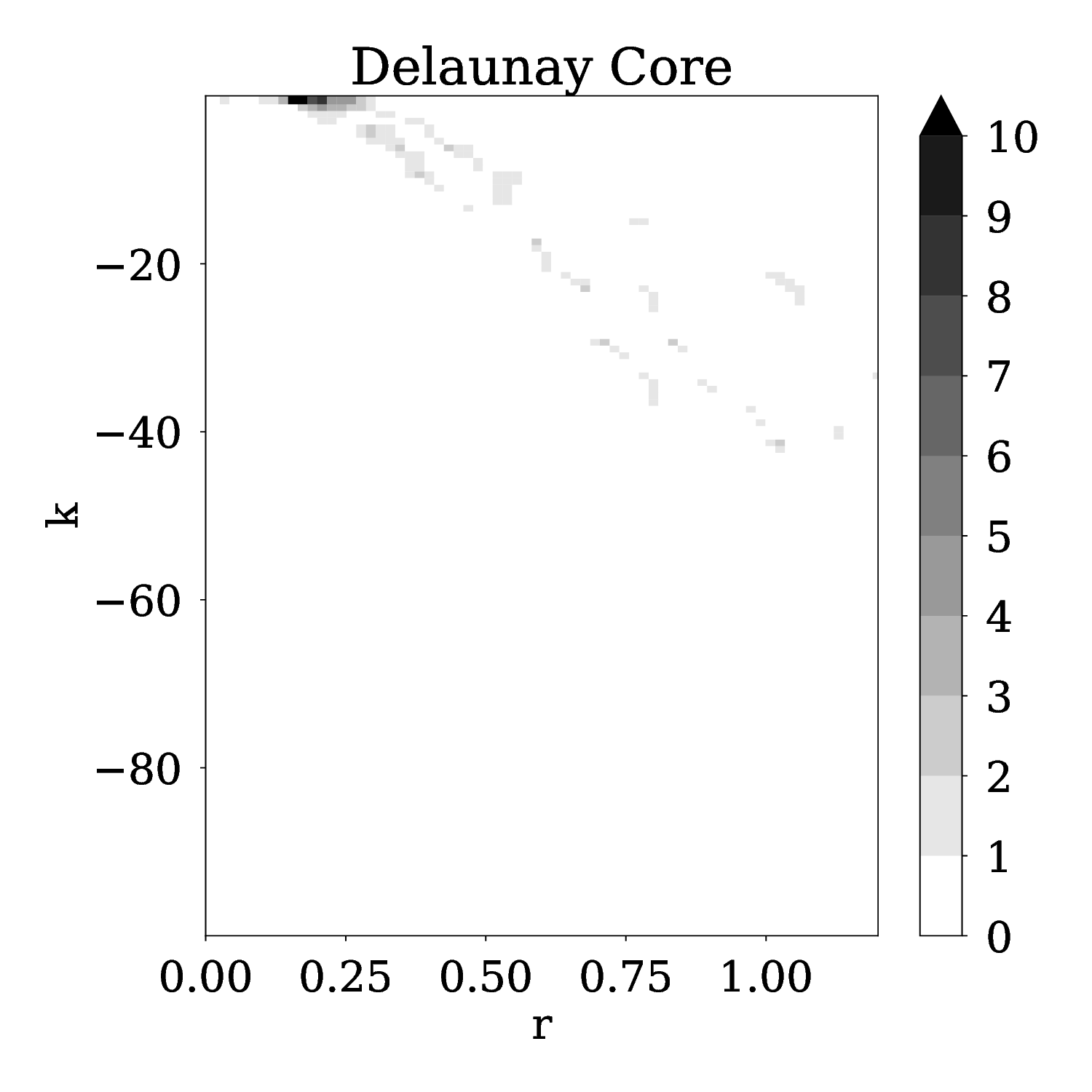}
\end{minipage}
\caption{Noisy point cloud samples from $S^1$ (left), together with the first Hilbert functions for the unsliced rhomboid tiling bifiltration (middle) and the Delaunay core bifiltration (right). From top to bottom, $n_\text{noise} = 0, 30, 70$, respectively.}
\label{fig:rhomboid_tiling_vs_delaunay_core_hilbert_function}
\end{figure}

To compare the sizes of the Delaunay core and unsliced rhomboid tiling bifiltrations, we perform benchmarks similar to \cite{Corbet2023}. We sample point clouds of size $n$ uniformly from the unit square in $\mathbb{R}^2$ for $k = 1,2,\ldots,k_\text{max}$, with $k_\text{max} \in \{4,8\}$. The resulting filtration sizes are reported in \cref{table:rhomboid_tiling_delaunay_size_benchmark}. In \cref{table:delaunay_core_bifiltration_size_and_time_benchmark} in the appendix, we list additional filtration sizes, as well as computation times for the Delaunay core bifiltration on uniform noise in both $\mathbb{R}^2$ and $\mathbb{R}^3$ using more values for $k$.

\begin{table}[H]
\centering
\begin{tabular}{ccrr}
\toprule
\multicolumn{2}{c}{} & \multicolumn{2}{c}{\textbf{Filtration size}} \\ 
\cmidrule(r){3-4}
$\mathbf{n}$ & $\mathbf{k_{\text{max}}}$ & \textbf{Delaunay core} & \textbf{Rhomboid tiling} \\
\midrule
10,000 & 4 & 210,711 & 1,197,090 \\
20,000 & 4 & 419,921 & 2,397,221 \\
40,000 & 4 & 840,774 & 4,797,268 \\
80,000 & 4 & 1,687,077 & 9,597,102 \\
\midrule
10,000 & 8 & 447,582 & 4,343,045 \\
20,000 & 8 & 894,760 & 8,702,590 \\
40,000 & 8 & 1,791,812 & 17,422,488 \\
80,000 & 8 & 3,591,491 & 34,861,283 \\
\bottomrule
\end{tabular}
\caption{Filtration sizes for the Delaunay core and unsliced rhomboid tiling bifiltrations for point clouds of size $n$ uniformly sampled from the unit square in $\mathbb{R}^2$, with different values of $k_\text{max}$.}
\label{table:rhomboid_tiling_delaunay_size_benchmark}
\end{table}

\section{Conclusions}\label{sec:conclusion}

We have introduced the core and Delaunay core bifiltrations and shown how they relate to the multicover bifiltration via interleavings. In addition, we have shown how the stability results for the multicover bifiltration directly extends to stability for the core and Delaunay core bifiltrations. For the core bifiltration, we additionally established a stability result stronger than the transferred multicover stability by giving a direct proof. We have also presented experimental results showing how Delaunay core persistence can be useful for analyzing relatively large point clouds in cases where robustness to noise is desired.

In future work, it would be interesting to see if these stability results can be strengthened further. It would also be interesting to consider stability with respect to metrics besides the Prohorov distance, and conditions on the point cloud giving stability for a fixed density parameter $k$. It is probably worthwhile to examine a core trifiltration including the parameter $\beta$.

In the applied direction, we believe it would be worthwhile to look into how we can best choose the line to compute persistence along, finding the optimal value of $\beta$, and also explore how Delaunay core persistence can be applied to real-world datasets.

\section*{Acknowledgments and Funding}
\noindent\textbf{Acknowledgments.} The authors thank David Loiseaux for assistance with the Delaunay core implementation in the \texttt{multipers} library.

\medskip

\noindent\textbf{Funding.} Lars M.\ Salbu: \sloppy This project was partially co-funded by the European Union, GA\#101126560; Bergen research and training program for future AI leaders across the disciplines, LEAD AI.

\newpage

\appendix
\section{Supporting Material}\label{sec:distance_tables}
\begin{table}[h!]
\centering
\begin{tabular}{rcccccccc} 
& \multicolumn{4}{c}{$\textbf{s}\text{ (fixed)}$} & \multicolumn{4}{c}{$\textbf{s}_{\text{max}}\text{ (}s=g(r)\text{)}$} \\
\cmidrule(lr){2-5}
\cmidrule(lr){6-9}
& \textbf{0}    & \textbf{0.001}   & \textbf{0.01}  &  \textbf{0.1}  & \textbf{0}    & \textbf{0.001}   & \textbf{0.01}  & \textbf{0.1} \\

\rowcolor{gray!10} \multicolumn{9}{l}{\textbf{Torus 1}} \\
Dim 0 & 
\textbf{0.109} & 0.161 & 0.603 & 1.392 &  
\textbf{0.109} & 0.161 & 0.614 & 1.475 \\ 

Dim 1 &
0.977 & 0.977 & 0.977 & 0.977 &  
0.977 & 0.977 & 0.977 & 0.977 \\ 

Dim 2 &
0.425 & 0.425 & 0.425 & 0.425 &  
0.425 & 0.425 & 0.425 & 0.425 \\ 

\rowcolor{gray!10} \multicolumn{9}{l}{\textbf{Torus 2}} \\
Dim 0 &
\textbf{0.086} & 0.131 & 0.388 & 0.901 &  
\textbf{0.086} & 0.131 & 0.404 & 0.958 \\ 

Dim 1 &
0.487 & 0.487 & \textbf{0.421} & 0.487 &  
0.487 & 0.487 & \textbf{0.414} & 0.487 \\ 

Dim 2 &
0.445 & 0.445 & \textbf{0.390} & 0.445 &  
0.445 & 0.445 & \textbf{0.372} & 0.445 \\ 

Dim 3 &
0.207 & 0.207 & 0.207 & 0.207 &  
0.207 & 0.207 & 0.207 & 0.207 \\ 

\rowcolor{gray!10} \multicolumn{9}{l}{\textbf{Sphere}} \\
Dim 0 &
\textbf{0.042} & 0.073 & 0.220 & 0.595 &  
\textbf{0.042} & 0.073 & 0.227 & 0.635 \\ 

Dim 1 &
0.029 & \textbf{0.016} & 0.016 & 0.016 &  
0.029 & \textbf{0.016} & 0.016 & 0.016 \\ 

Dim 2 &
0.475 & 0.475 & 0.475 & 0.475 &  
0.475 & 0.475 & 0.475 & 0.475 \\ 

\rowcolor{gray!10} \multicolumn{9}{l}{\textbf{Circle}} \\
Dim 0 &
\textbf{0.012} & 0.014 & 0.073 & 0.342 &  
\textbf{0.012} & 0.014 & 0.074 & 0.371 \\ 

Dim 1 &
0.499 & 0.499 & 0.499 & \textbf{0.432} &  
0.499 & 0.499 & 0.499 & \textbf{0.393} \\ 

\rowcolor{gray!10} \multicolumn{9}{l}{\textbf{Circles}} \\
Dim 0 &
0.125 & 0.125 & \textbf{0.096} & 0.357 &  
0.125 & 0.125 & \textbf{0.092} & 0.377 \\ 

Dim 1 &
0.249 & 0.249 & 0.249 & 0.249 &  
0.249 & 0.249 & 0.249 & 0.249 \\ 

\end{tabular}
\caption{Bottleneck distances between the ground truth and the Delaunay core persistence diagrams where $n=10000$ and $m=10000$ ($1:1$ signal-to-noise ratio). Distances are shown for both $s$ fixed and for persistence along the line $s=g(r)$ determined by $s_\text{max}$, $n+m$ and the diameter of the input point cloud.}
\label{table:bottleneck_distances_n_10000_m_10000}
\end{table}

\begin{table}[h!]
\centering
\begin{tabular}{rcccccccc} 
& \multicolumn{4}{c}{$\textbf{s}\text{ (fixed)}$} & \multicolumn{4}{c}{$\textbf{s}_{\text{max}}\text{ (}s=g(r)\text{)}$} \\
\cmidrule(lr){2-5}
\cmidrule(lr){6-9}
& \textbf{0}    & \textbf{0.001}   & \textbf{0.01}  &  \textbf{0.1}  & \textbf{0}    & \textbf{0.001}   & \textbf{0.01}  & \textbf{0.1} \\

\rowcolor{gray!10} \multicolumn{9}{l}{\textbf{Torus 1}} \\
Dim 0 &
0.187 & \textbf{0.114} & 0.505 & 1.302 &  
0.187 & \textbf{0.114} & 0.518 & 1.381 \\ 

Dim 1 &
0.968 & 0.968 & \textbf{0.545} & 0.968 &  
0.968 & 0.968 & \textbf{0.554} & 0.968 \\ 

Dim 2 &
0.270 & \textbf{0.183} & 0.387 & 0.388 &  
0.270 & \textbf{0.183} & 0.388 & 0.388 \\ 

\rowcolor{gray!10} \multicolumn{9}{l}{\textbf{Torus 2}} \\
Dim 0 &
0.147 & \textbf{0.089} & 0.315 & 0.895 &  
0.147 & \textbf{0.089} & 0.324 & 0.973 \\ 

Dim 1 &
0.482 & 0.482 & \textbf{0.343} & 0.482 &  
0.482 & 0.482 & \textbf{0.338} & 0.482 \\ 

Dim 2 &
0.441 & 0.420 & \textbf{0.288} & 0.441 &  
0.441 & 0.411 & \textbf{0.303} & 0.441 \\ 

Dim 3 &
0.207 & 0.207 & 0.207 & \textbf{0.178} &  
0.207 & 0.207 & 0.207 & \textbf{0.154} \\ 

\rowcolor{gray!10} \multicolumn{9}{l}{\textbf{Sphere}} \\
Dim 0 &
0.091 & \textbf{0.055} & 0.186 & 0.548 &  
0.091 & \textbf{0.055} & 0.190 & 0.589 \\ 

Dim 1 &
0.044 & \textbf{0.020} & 0.020 & 0.020 &  
0.044 & \textbf{0.020} & 0.020 & 0.020 \\ 

Dim 2 &
0.470 & 0.470 & \textbf{0.340} & 0.470 &  
0.470 & 0.470 & \textbf{0.302} & 0.470 \\ 

\rowcolor{gray!10} \multicolumn{9}{l}{\textbf{Circle}} \\
Dim 0 &
0.034 & \textbf{0.020} & 0.053 & 0.288 &  
0.034 & \textbf{0.020} & 0.053 & 0.312 \\ 

Dim 1 &
0.499 & 0.499 & 0.499 & \textbf{0.328} &  
0.499 & 0.499 & 0.499 & \textbf{0.357} \\ 

\rowcolor{gray!10} \multicolumn{9}{l}{\textbf{Circles}} \\
Dim 0 &
0.125 & 0.114 & \textbf{0.092} & 0.340 &  
0.125 & 0.114 & \textbf{0.088} & 0.356 \\ 

Dim 1 &
0.248 & 0.248 & \textbf{0.162} & 0.248 &  
0.248 & 0.248 & \textbf{0.159} & 0.248 \\ 

\end{tabular}
\caption{Bottleneck distances between the ground truth and the Delaunay core persistence diagrams where $n=10000$ and $m=1000$ ($10:1$ signal-to-noise ratio). Distances are shown for both $s$ fixed and for persistence along the line $s=g(r)$ determined by $s_\text{max}$, $n+m$ and the diameter of the input point cloud.}
\label{table:bottleneck_distances_n_10000_m_1000}
\end{table}

\begin{table}[h!]
\centering
\begin{tabular}{rcccccccc} 
& \multicolumn{4}{c}{$\textbf{s}\text{ (fixed)}$} & \multicolumn{4}{c}{$\textbf{s}_{\text{max}}\text{ (}s=g(r)\text{)}$} \\
\cmidrule(lr){2-5}
\cmidrule(lr){6-9}
& \textbf{0}    & \textbf{0.001}   & \textbf{0.01}  &  \textbf{0.1}  & \textbf{0}    & \textbf{0.001}   & \textbf{0.01}  & \textbf{0.1} \\

\rowcolor{gray!10} \multicolumn{9}{l}{\textbf{Torus 1}} \\
Dim 0 &
0.273 & \textbf{0.118} & 0.501 & 1.355 &  
0.273 & \textbf{0.118} & 0.509 & 1.420 \\ 

Dim 1 &
0.967 & 0.684 & \textbf{0.549} & 0.967 &  
0.967 & 0.662 & \textbf{0.546} & 0.967 \\ 

Dim 2 &
0.136 & \textbf{0.125} & 0.391 & 0.391 &  
0.136 & \textbf{0.125} & 0.390 & 0.391 \\ 

\rowcolor{gray!10} \multicolumn{9}{l}{\textbf{Torus 2}} \\
Dim 0 &
0.178 & \textbf{0.092} & 0.300 & 0.879 &  
0.178 & \textbf{0.092} & 0.308 & 0.986 \\ 

Dim 1 &
0.478 & 0.409 & \textbf{0.387} & 0.482 &  
0.478 & 0.405 & \textbf{0.387} & 0.482 \\ 

Dim 2 &
0.330 & \textbf{0.227} & 0.277 & 0.439 &  
0.330 & \textbf{0.227} & 0.291 & 0.439 \\ 

Dim 3 &
0.207 & 0.207 & 0.207 & 0.207 &  
0.207 & 0.207 & 0.207 & 0.207 \\ 

\rowcolor{gray!10} \multicolumn{9}{l}{\textbf{Sphere}} \\
Dim 0 &
0.146 & \textbf{0.049} & 0.175 & 0.544 &  
0.146 & \textbf{0.049} & 0.177 & 0.590 \\ 

Dim 1 &
\textbf{0.020} & 0.022 & 0.022 & 0.022 &  
\textbf{0.020} & 0.022 & 0.022 & 0.022 \\ 

Dim 2 &
0.465 & 0.423 & \textbf{0.283} & 0.465 &  
0.465 & 0.423 & \textbf{0.278} & 0.465 \\ 

\rowcolor{gray!10} \multicolumn{9}{l}{\textbf{Circle}} \\
Dim 0 &
0.106 & \textbf{0.010} & 0.051 & 0.275 &  
0.106 & \textbf{0.010} & 0.052 & 0.301 \\ 

Dim 1 &
0.499 & 0.499 & \textbf{0.270} & 0.308 &  
0.499 & 0.499 & \textbf{0.263} & 0.338 \\ 

\rowcolor{gray!10} \multicolumn{9}{l}{\textbf{Circles}} \\
Dim 0 &
\textbf{0.064} & 0.125 & 0.099 & 0.341 &  
\textbf{0.064} & 0.125 & 0.098 & 0.356 \\ 

Dim 1 &
0.248 & 0.248 & \textbf{0.223} & 0.248 &  
0.248 & 0.248 & \textbf{0.219} & 0.248 \\ 

\end{tabular}
\caption{Bottleneck distances between the ground truth and the Delaunay core persistence diagrams where $n=10000$ and $m=100$ ($100:1$ signal-to-noise ratio). Distances are shown for both $s$ fixed and for persistence along the line $s=g(r)$ determined by $s_\text{max}$, $n+m$ and the diameter of the input point cloud.}
\label{table:bottleneck_distances_n_10000_m_100}
\end{table}

\begin{table}[h!]
\centering
\begin{tabular}{r*{4}{c}}
\toprule
& \multicolumn{4}{c}{\textbf{Fixed} $\mathbf{k}$} \\
\cmidrule(lr){2-5}
$\mathbf{n+m}$ & 10 & 100 & 1000 & 10000 \\
\cmidrule(lr){1-1}
\cmidrule(lr){2-5}
10000 & 1.11 & 1.14 & 1.72 & 7.86 \\
20000 & 2.32 & 2.42 & 3.59 & 16.90 \\
30000 & 3.57 & 3.75 & 5.58 & 26.71 \\
40000 & 4.85 & 5.08 & 7.50 & 35.15 \\
50000 & 6.17 & 6.53 & 9.76 & 47.29 \\
60000 & 7.51 & 7.91 & 11.70 & 56.64 \\
\bottomrule
\end{tabular}
\caption{Runtimes (in seconds) for computing Delaunay core persistence on the \textbf{Torus 1} dataset with $n=m=\vert X\vert/2$ and $\sigma=0.07$. The computations were conducted on an Intel Core i5-6300U (at 2.40 GHz) CPU with 16 GB RAM. The minimum runtime out of $10$ runs is reported in the table.}
\label{table:runtimes_torus_1}
\end{table}

\begin{figure}[h]
\centering

\includegraphics[width=\textwidth]{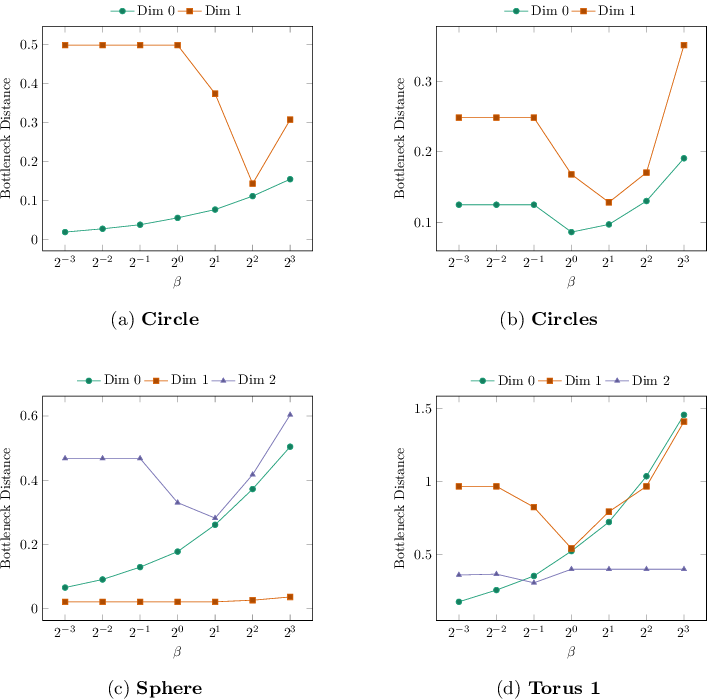}

\caption{Bottleneck distances between the ground truth persistence diagrams and the Delaunay core persistence diagrams, showing the effect of varying $\beta\in\{0.125, 0.25, 0.5, 1, 2, 4, 8\}$. All point clouds were sampled with $n=10000$ (signal), $m=1000$ (noise) and $\sigma=0.07$. Delaunay core persistence was computed along the line determined by $s_{\text{max}}=0.01$ (corresponding to $k_{\text{max}}=110$ in this case).}
\label{fig:beta_bottleneck_distance_plots}
\end{figure}

\begin{algorithm}[H]
\caption{Algorithm for computing the Delaunay core bifiltration}\label{alg:delaunay_core}
\begin{algorithmic}
\Require Finite point cloud $A \subseteq \mathbb{R}^d$, parameter $\beta > 0$, list of $k$-values $K_{\text{list}}$
\Ensure A function $F$ assigning bifiltration values to simplices in $\operatorname{Alpha}(A)$

\State Construct the alpha complex $\operatorname{Alpha}(A)$ with filtration values $\alpha(\sigma)$

\State Initialize $F$ as an empty map $F\colon\operatorname{Alpha}(A)\to\mathcal{P}((0,\infty)\times(0,\infty)^{op})$

\For{each simplex $\sigma \in \operatorname{Alpha}(A)$}
    \State Use $\alpha(\sigma)$, the precomputed filtration value
    \State Initialize $F(\sigma)=\emptyset$
    \For{each $k \in K_{\text{list}}$}
        \State Compute the maximum $D_k(\sigma)$ of $d_k^A(a)$ for $a\in\sigma$
        \State Add the filtration value $(\max(\alpha(\sigma), \beta D_k(\sigma)), k)$ to $F(\sigma)$
    \EndFor
\EndFor
\State \Return $F$
\end{algorithmic}
\caption{Construction of the full Delaunay core bifiltration of a point cloud in Euclidean space, starting from the alpha complex. Non-critical filtration values are computed but can be omitted. In the \texttt{multipers} implementation of the Delaunay core bifiltration, they are automatically discarded in the multiparameter simplex tree, and are thus not included in further computations.}
\end{algorithm}

\begin{table}[ht]
\centering
\begin{tabular}{ccccrr}
\toprule
$\mathbf{n}$ & $\mathbf{d}$ & $\mathbf{k_{\text{max}}}$ & $\mathbf{k_{\text{step}}}$ & \textbf{Filtration size} & \textbf{Time (seconds)} \\
\midrule
10,000 & 2 & 100 & 1 & 5,932,251 & 3.13 \\
20,000 & 2 & 100 & 1 & 11,888,078 & 6.30 \\
30,000 & 2 & 100 & 1 & 17,848,139 & 9.68 \\
40,000 & 2 & 100 & 1 & 23,803,095 & 13.19 \\
\midrule
10,000 & 2 & 1000 & 10 & 5,972,335 & 4.12 \\
20,000 & 2 & 1000 & 10 & 11,963,579 & 8.33 \\
30,000 & 2 & 1000 & 10 & 17,950,931 & 12.68 \\
40,000 & 2 & 1000 & 10 & 23,947,714 & 16.99 \\
\midrule
10,000 & 3 & 100 & 1 & 27,287,112 & 18.10 \\
20,000 & 3 & 100 & 1 & 55,139,407 & 37.10 \\
30,000 & 3 & 100 & 1 & 83,328,618 & 55.93 \\
40,000 & 3 & 100 & 1 & 111,429,716 & 75.10 \\
\midrule
10,000 & 3 & 1000 & 10 & 27,879,586 & 19.47 \\
20,000 & 3 & 1000 & 10 & 56,363,053 & 39.08 \\
30,000 & 3 & 1000 & 10 & 85,007,731 & 59.07 \\
40,000 & 3 & 1000 & 10 & 113,475,183 & 79.28 \\
\bottomrule
\end{tabular}
\caption{Filtration sizes and corresponding computation times for the Delaunay core bifiltration with $k\in\{1,1+k_\text{step},1+2k_\text{step},\ldots,k_\text{max}\}$ on point clouds consisting of $n$ points uniformly sampled from $[0,1]^d\subseteq\mathbb{R}^d$.}
\label{table:delaunay_core_bifiltration_size_and_time_benchmark}
\end{table}

\clearpage

\bibliography{bibliography}

\end{document}